\documentclass{amsart}
\usepackage{amssymb}
\usepackage{verbatim}
\usepackage[usenames]{color}
\usepackage{hyperref}
\usepackage{url}
\usepackage[all]{xy}

\newtheorem{theorem}{Theorem}[section]
\newtheorem{proposition}[theorem]{Proposition}
\newtheorem{lemma}[theorem]{Lemma}
\newtheorem{cor}[theorem]{Corollary}

\newtheorem{observation}[theorem]{Observation}
\newtheorem*{thm1.3}{Theorem \ref{thm.barren}}
\newtheorem*{thm1.4}{Theorem \ref{thm.spcpres}}
\newtheorem*{thm1.5}{Theorem \ref{thm.maintRs}}

\theoremstyle{definition}
\newtheorem{definition}[theorem]{Definition}

\newtheorem{remark}[theorem]{Remark}
\newtheorem{example}[theorem]{Example}
\newtheorem{question}[theorem]{Question}

\theoremstyle{remark}

\newcommand{\defemph}{\textit}

\newcommand{\dom}{\textrm{Dom}}

\newcommand{\zfc}{\textrm{ZFC}}

\newcommand{\ad}{\textrm{AD}}
\newcommand{\im}{\textrm{Im}}

\newcommand{\mc}{\mathcal}
\newcommand{\mbb}{\mathbb}

\newcommand{\forces}{\Vdash}

\newcommand{\baire}{{^\omega}\omega}

\newcommand{\Fin}{\mathrm{Fin}}

\newcommand{\al}{\alpha}
\newcommand{\om}{\omega}

\newcommand{\sse}{\subseteq}

\DeclareMathOperator{\depth}{depth}

\DeclareMathOperator{\FIN}{FIN}

\newcommand{\re}{\restriction}

\newcommand{\bN}{\mathbb{N}}

\newcommand{\ra}{\rightarrow}

\newcommand{\lgl}{\langle}
\newcommand{\rgl}{\rangle}

\newcommand{\Fraisse}{Fra{\"{i}}ss{\'{e}}}

\newcommand{\Nesetril}{Ne{\v{s}}et{\v{r}}il}

\newcommand{\Rodl}{R{\"{o}}dl}

\theoremstyle{remark}

\newcommand{\noprint}[1]{\relax}


\begin{document}

\title{Classes of barren extensions}

\author{Natasha
  Dobrinen}
\address{Department of Mathematics\\
  University of Denver \\
C.M. Knudson Hall, Room 300
2390 S. York St.
   \\ Denver, CO \ 80208 U.S.A.}
\email{Natasha.Dobrinen@du.edu}
\urladdr{\url{http://web.cs.du.edu/~ndobrine}}
\thanks{The first author was partially supported by  National Science Foundation Grants  DMS-1301665 and DMS-1901753}

\author{Dan
 Hathaway}
\address{
Department of Mathematics \\
University of Vermont\\
Innovation Hall \\
82 University Place\\
Burlington, VT 05405 U.S.A.}
\email{Daniel.Hathaway@uvm.edu}
\urladdr{\url{http://mysite.du.edu/~dhathaw2/}}

%


\maketitle

\begin{abstract}
Henle, Mathias, and Woodin proved in \cite{Henle/Mathias/Woodin85} that,  provided that  $\om\ra(\om)^{\om}$ holds in a model $M$ of ZF, then 
 forcing with $([\om]^{\om},\sse^*)$ over $M$ adds no new sets of ordinals, thus earning the name a ``barren'' extension.
 Moreover, under an  additional assumption,  they proved that this generic extension preserves all strong partition cardinals.
This forcing thus produces a model $M[\mathcal{U}]$,  where
$\mathcal{U}$  is  a Ramsey ultrafilter,   with many properties of the original model  $M$.
 This begged the question of how important the Ramseyness of $\mathcal{U}$ is for these results. 
 In this paper, we show that several classes of 
 $\sigma$-closed  forcings which  generate non-Ramsey ultrafilters have the same properties.
Such ultrafilters include Milliken-Taylor ultrafilters, a class of rapid p-points of Laflamme,   $k$-arrow p-points of Baumgartner and Taylor, and extensions to a  class of ultrafilters  constructed by Dobrinen, Mijares and Trujillo.
 Furthermore, the class of  Boolean algebras $\mathcal{P}(\om^{\al})/\Fin^{\otimes \al}$, $2\le \al<\om_1$,  forcing  non-p-points  also produce barren extensions.
\end{abstract}


\section{Introduction}\label{sec.intro}

In their paper, {\em A barren extension} \cite{Henle/Mathias/Woodin85},  Henle, Mathias, and Woodin proved that forcing with $([\om]^{\om},\sse^*)$   
does not add new subsets of ordinals and preserves strong partition cardinals, assuming the ground model satisfies certain  properties.
The first of these is the infinite partition relation 
\begin{equation}
\om\ra(\om)^{\om},
\end{equation}
which means that for each coloring $c:[\om]^{\om}\ra 2$, there is an infinite subset $x\sse \om$ such that the restriction of  $c$ to $[x]^{\om}$ is constant.
This partition relation fails outright in the presence of the Axiom of Choice.  
However, it is consistent   with fragments of Choice, as seen in the following:
AD$_{\mathbb{R}}$   implies $\om\ra(\om)^{\om}$.
This was first proved by Prikry with the additional assumption of DC in \cite{Prikry76}, and  soon after, Mathias  showed DC  was unnecessary in  \cite{Mathias77}.
Similarly 
AD$^+$ + $V=L(\mathcal{P}(\mathbb{R}))$ implies 
 $\om\ra(\om)^{\om}$.
 This was proved in the Cabal and can be seen to 
 follow from $\Sigma^2_1$ reflection to the Suslin coSuslin sets  (see \cite{Steel/Trang}
 and Theorem 25 in \cite{Woodin10})
 and from the fact that every Suslin set of reals is Ramsey
 (see Theorem 2.2 of 
 \cite{Feng/Magidor/Woodin92}).
Hence AD + $V = L(\mathbb{R})$ implies $\om\ra(\om)^{\om}$
 (because AD + $V = L(\mathbb{R})$ implies AD$^+$ + $V=L(\mathcal{P}(\mathbb{R}))$),
 and so $\om\ra(\om)^{\om}$ holds in $L(\mathbb{R})$
 assuming there are $\omega$ Woodin cardinals with a measurable above
 (see \cite{Shelah/Woodin90}).
Furthermore, the partition relation $\om\ra(\om)^{\om}$  holds  in  the $L(\mathbb{R})$ of a model $V$ of ZFC 
if  
 $V$ is a model 
 obtained by collapsing a strongly inaccessible cardinal $\kappa$ to $\om_1$ via the L\'{e}vy collapse (see \cite{Mathias77}).

Henle, Mathias, and Woodin proved that forcing with  $([\om]^{\om},\sse^*)$ over a model satisfying the infinitary partition relation $\om\ra(\om)^{\om}$ does not add any new subsets of ordinals over the ground model, aptly  calling this extension  {\em barren}.

\begin{theorem}[Henle, Mathias, and Woodin, \cite{Henle/Mathias/Woodin85}]\label{thm.1.0}
Let $M$ be a transitive model of ZF + $\om\ra(\om)^{\om}$ and $N$ be its extension via $([\om]^{\om},\sse^*)$.
Then $M$ and $N$ have the same sets of ordinals;
moreover every wellordered sequence in $N$ of elements of $M$ lies in $M$.
\end{theorem}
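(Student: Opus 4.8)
The plan is to prove the stronger second clause---every wellordered sequence in $N$ of elements of $M$ lies in $M$---from which the equality of the sets of ordinals follows at once, since a set $A\sse\lambda$ in $N$ is coded by its characteristic function $\langle\chi_A(\alpha):\alpha<\lambda\rangle$, a $\lambda$-sequence of elements of $\{0,1\}\sse M$. So fix an ordinal $\lambda$, a set $X\in M$, a condition $p\in([\om]^\om,\sse^*)$, and a name $\dot s$ with $p\forces\dot s:\check\lambda\ra\check X$. Say a condition \emph{decides} $\dot s(\check\alpha)$ if it forces $\dot s(\check\alpha)=\check a$ for some $a\in X$. It suffices to produce $q\le p$ deciding $\dot s(\check\alpha)$ for \emph{every} $\alpha<\lambda$ simultaneously: granting this, $f:=\{(\alpha,a)\in\lambda\times X:q\forces\dot s(\check\alpha)=\check a\}$ is a function defined in $M$ using the ($M$-definable) forcing relation, hence $f\in M$ by Separation, and $q\forces\dot s=\check f$. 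For each fixed $\alpha$ the set $D_\alpha:=\{z\le p:z\text{ decides }\dot s(\check\alpha)\}$ is dense open below $p$, because $p$ forces $\dot s(\check\alpha)$ into the ground-model set $X$. Thus the theorem reduces to the assertion that $\bigcap_{\alpha<\lambda}D_\alpha$ is dense below $p$, i.e.\ to the $\infty$-distributivity of the forcing.

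The engine is a pure-decision lemma drawn from $\om\ra(\om)^\om$: for any statement $\vp$ of the forcing language over $M$ and any $x\in[\om]^\om$ there is an infinite $y\sse x$ deciding $\vp$. To see this, color $[x]^\om$ with three colors according to whether $z\forces\vp$, $z\forces\neg\vp$, or $z$ decides neither; since $\om\ra(\om)^\om$ yields homogeneity for finitely many colors, we obtain $y$ with $[y]^\om$ monochromatic. The monochromatic color cannot be ``undecided'', for the conditions deciding $\vp$ are dense, so some $w\sse^* y$ decides $\vp$, whence $w\cap y$ is an infinite subset of $y$ deciding $\vp$; thus $y$ itself decides $\vp$. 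Combined with $\sigma$-closure of $([\om]^\om,\sse^*)$---every $\sse^*$-descending $\om$-sequence has a pseudo-intersection---this already settles the countable case: building a $\sse^*$-descending sequence through $D_0,D_1,\dots$ and taking a pseudo-intersection meets all the $D_n$ at once, so no new $\om$-sequences, and in particular no new reals, are added.

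The main obstacle is the uncountable case, where $\sigma$-closure is useless: $([\om]^\om,\sse^*)$ is not $\om_1$-closed (a $\sse^*$-descending $\om_1$-sequence may have no lower bound), so a naive transfinite fusion through $\langle D_\alpha:\alpha<\lambda\rangle$ breaks at the first limit of uncountable cofinality, and a single coloring of $[\om]^\om$ cannot encode $\lambda$-many independent decisions once $\lambda$ exceeds the continuum. The extra strength I would exploit is that $\om\ra(\om)^\om$ renders every subset of $[\om]^\om$ Ramsey, indeed (by Mathias) completely Ramsey in the sense of Ellentuck. Applied to the open set $D_\alpha$, together with the fact that no condition can avoid $D_\alpha$, this upgrades pure decision to a hereditary uniformity: for each $\alpha$ there is $y_\alpha\le p$ such that \emph{every} infinite subset of $y_\alpha$ decides $\dot s(\check\alpha)$. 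The heart of the argument is then to globalize these hereditary, downward-closed requirements into a single $q\le p$ lying below $y_\alpha$ in the relevant hereditary sense for all $\alpha<\lambda$ at once. This is where the genuine content of Henle--Mathias--Woodin lies and where I expect the real difficulty: one must show that the partition relation forbids the branching that an uncountable family of genuinely distinct decisions would create. Intuitively, an unbounded splitting would manufacture, via pseudo-intersections along a binary tree, a continuum-sized family of pairwise-incompatible conditions depending continuously on a branch, and complete Ramsey-ness of the associated set of reals collapses this continuity to one homogeneous $y$ on which the decision is constant---a contradiction.

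Throughout I would keep track of the limited choice available, using only that the countable case and the tree construction require no more than countably many selections, which are underwritten by $\sigma$-closure and by the homogeneous sets delivered by $\om\ra(\om)^\om$, so that the whole argument stays inside ZF\,$+\,\om\ra(\om)^\om$. Once $\bigcap_{\alpha<\lambda}D_\alpha$ is shown dense below every condition, the reduction of the first paragraph yields $q$ and $f\in M$ with $q\forces\dot s=\check f$; as $p$ and $\dot s$ were arbitrary, every wellordered sequence of elements of $M$ appearing in $N$ already belongs to $M$, and in particular $M$ and $N$ have the same sets of ordinals.
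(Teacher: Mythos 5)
Your reduction to the density of $\bigcap_{\alpha<\lambda}D_\alpha$ and your treatment of the countable case (pure decision from $\om\ra(\om)^{\om}$ plus $\sigma$-closure of $\sse^*$) are sound, but the proposal stops exactly where the theorem begins. For uncountable $\lambda$ you explicitly defer the main step (``this is where I expect the real difficulty'') and substitute an intuition about collapsing a binary tree of decisions. That step is the entire content of the theorem, so what you have is a correct setup plus an acknowledged hole. Worse, the intuition points at a non-contradiction: a continuum-sized family of pairwise-incompatible conditions ``depending continuously on a branch'' exists outright in $([\om]^{\om},\sse^*)$ --- the almost disjoint family $\{\{r\re n : n<\om\}: r\in{^\om 2}\}$ (coding initial segments as integers) is exactly such a family, and $\om\ra(\om)^{\om}$ does nothing to forbid it. (A smaller issue: your ``hereditary uniformity'' upgrade is vacuous, since once $y_\alpha$ itself decides $\dot s(\check\alpha)$, every infinite subset of $y_\alpha$ is a stronger condition and decides it the same way; this is just $y_\alpha\in D_\alpha$ with $D_\alpha$ open.)

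The actual argument (Henle--Mathias--Woodin, axiomatized in this paper as the Left--Right Axiom and proved as Theorem \ref{thm.barren}) is not a fusion or distributivity argument at all, and it handles every $\lambda$ uniformly with a \emph{single} application of the partition relation --- contrary to your claim that one coloring cannot encode $\lambda$-many decisions. Assume toward a contradiction that no $q\sse^* p_0$ decides all of $\dot s$. Then $\varphi(q):=$ the least $\alpha$ such that $q$ does not decide $\dot s(\check\alpha)$ is defined for every such $q$ and is invariant under $=^*$; this rank function compresses all $\lambda$-many decisions into comparisons of ordinals. Write each $x\sse p_1$ ($p_1\sse^* p_0$) as the interleaving of its even- and odd-indexed parts $\mathrm{Left}(x),\mathrm{Right}(x)$, and color $x$ by whether $\varphi(\mathrm{Left}(x))$ is $<$, $=$, or $>$ $\varphi(\mathrm{Right}(x))$. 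Homogenize to get $p_2$. The color on $[p_2]^{\om}$ must be ``$=$'', since swapping the roles of the even and odd parts (choose $y,z\sse p_2$ with $\mathrm{Left}(y)=^*\mathrm{Right}(z)$ and $\mathrm{Right}(y)=^*\mathrm{Left}(z)$) carries color ``$<$'' to color ``$>$''. Now $p_2$ does not decide $\dot s(\check\varphi(p_2))$, so there are $x,y\sse p_2$ forcing contradictory values of it; interleave them into $z\sse p_2$ so that $\mathrm{Left}(z)\sse^* x$ while $\mathrm{Right}(z)$ remains $\sse^*$-compatible with both $x$ and $y$. Then $\varphi(\mathrm{Left}(z))\ge\varphi(x)>\varphi(p_2)$, whereas $\mathrm{Right}(z)$ cannot decide $\dot s(\check\varphi(p_2))$ and hence $\varphi(\mathrm{Right}(z))=\varphi(p_2)$, so $z$ has color ``$>$'' --- contradicting homogeneity. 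This rank-plus-interleaving device is the missing idea, and nothing in your tree/fusion sketch produces it.
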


The other key  properties  utilized in \cite{Henle/Mathias/Woodin85} are  called  LU and EP.
LU is a uniformization axiom, guaranteeing   a function  uniformizing  a relation on the Baire space, relative to some infinite set.
EP says that 
the intersection of any well-ordered collection of completely Ramsey positive sets is again completely Ramsey positive.
These assumptions are shown to preserve infinite dimensional partition relations of the following type.
Given cardinals $\kappa,\lambda,\mu$ with
$2\le\mu<\kappa$ and 
$\lambda\le \kappa$,
\begin{equation}\label{eq.spc}
\kappa\ra(\kappa)^{\lambda}_{\mu}
\end{equation}
denotes that for each coloring $c:[\kappa]^{\lambda}\ra\mu$,
there is a subset $K\in [\kappa]^{\kappa}$ such that the restriction of $c$ to $[K]^{\lambda}$ is constant.
An uncountable cardinal $\kappa$ satisfying $(\ref{eq.spc})$
 for $\lambda = \kappa$ and for every $2 \le \mu < \kappa$ is called a {\em strong partition cardinal}.

\begin{theorem}[Henle, Mathias, and Woodin, \cite{Henle/Mathias/Woodin85}]\label{thm.3.3}
(ZF + EP + LU)\ \ Suppose $0<\lambda=\om\cdot\lambda\le \kappa$, $2\le\mu<\kappa$, 
$\kappa\ra(\kappa)^{\lambda}_{\mu}$,
and that there is a surjection from $[\om]^{\om}$ onto $[\kappa]^{\kappa}$.
Then in the forcing extension via $([\om]^{\om},\sse^*)$, $\kappa\ra(\kappa)^{\lambda}_{\mu}$ holds.
\end{theorem}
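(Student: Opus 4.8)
The plan is to reduce the whole statement to a single uniform pure-decision lemma and then feed the resulting ground-model coloring into the partition relation already available in $M$. Fix a $\bP$-name $\dot c$ and a condition $p_0$ with $p_0 \forces \dot c\colon [\check\kappa]^{\check\lambda}\ra\check\mu$, where $\bP=([\om]^\om,\sse^*)$. The key step I would isolate is the following.

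\medskip
\emph{Main Lemma.} There are $A^*\sse^* p_0$ and a coloring $c^*\colon[\kappa]^\lambda\ra\mu$ lying in $M$ such that $A^*\forces (\forall s\in[\check\kappa]^{\check\lambda})\ \dot c(s)=\check{c^*}(s)$.
\medskip

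Granting the Main Lemma, the theorem follows quickly. Working in $M$, apply the hypothesis $\kappa\ra(\kappa)^\lambda_\mu$ to $c^*$ to obtain $K\in[\kappa]^\kappa$ and a color $i^*<\mu$ with $c^*\re[K]^\lambda\equiv i^*$. Then $A^*\forces$ ``$\check K\in[\check\kappa]^{\check\kappa}$ and $\dot c\re[\check K]^{\check\lambda}\equiv\check i^*$'', so $A^*$ forces that $\dot c$ has a homogeneous set of size $\kappa$. Since $p_0$ and $\dot c$ were arbitrary and $A^*\sse^* p_0$, the conditions forcing ``$\dot c$ has a homogeneous set of size $\kappa$'' are dense below any condition forcing $\dot c$ to be a coloring; hence $\bone\forces\kappa\ra(\kappa)^\lambda_\mu$. (Note that the surjection hypothesis is not needed for this reduction.)

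For the Main Lemma I would argue as follows. For each $s\in[\kappa]^\lambda$ the set $P_s=\{A\sse^* p_0 : A\text{ decides }\dot c(\check s)\}$ is dense below $p_0$, and in the completely Ramsey framework in which EP is formulated each $P_s$ is completely Ramsey positive (indeed comeager in the Ellentuck topology). A single condition in $\bigcap_{s}P_s$ would decide every value $\dot c(\check s)$ and thereby \emph{define} $c^*\in M$; the entire difficulty is producing such a condition, since $[\kappa]^\lambda$ need not be wellorderable and its cardinality may exceed the closure of $\bP$. This is where the remaining hypotheses combine. First, compose with a surjection $[\om]^\om\twoheadrightarrow[\kappa]^\lambda$, which the given surjection onto $[\kappa]^\kappa$ readily yields, in order to re-index the family $\{P_s\}$ by reals. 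Next, apply LU to the relation $R\sse[\om]^\om\times[\om]^\om$ asserting ``the second coordinate codes a condition together with a color that decide $\dot c$ at the $\lambda$-set named by the first coordinate''; LU returns an infinite set $a$ and a uniformizing function choosing, coherently and relative to $a$, a deciding condition-and-value for every $s$ in the range. Finally, feed the now wellordered stock of deciding sets supplied by this uniformizer into EP to intersect them down to one positive set, and take $A^*$ inside it refining $a$. The equation $\lambda=\om\cdot\lambda$ is what permits the $\om$-indexed refinements produced by the forcing to be interleaved with the $\lambda$-indexed coordinates of $s$ without ever leaving $[\kappa]^\lambda$.

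The main obstacle is precisely this simultaneous pure decision: converting the pointwise deciding sets $P_s$, one for each of possibly unmanageably many and non-wellorderable $s$, into a single condition $A^*$ that decides them all at once to ground-model values. Everything hinges on getting LU and EP to cooperate through the surjective coding, and on checking that the interleaving licensed by $\lambda=\om\cdot\lambda$ genuinely keeps the refined objects inside $[\kappa]^\lambda$; the surrounding density and pushout reasoning is comparatively soft.
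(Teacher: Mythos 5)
Your entire argument funnels through the Main Lemma, and the Main Lemma is false --- refutably so from the hypotheses of the theorem itself, not merely unprovable by the route you sketch. Since $0<\lambda=\om\cdot\lambda$, the ordinal $\lambda$ is infinite and satisfies $\om+\lambda=\om+\om\cdot\lambda=\om\cdot(1+\lambda)=\om\cdot\lambda=\lambda$; hence the map $j(Y)=\{2n:n\in Y\}\cup\{2n+1:n\in\om\m Y\}\cup[\om,\lambda)$ is an injection of $\mathcal{P}(\om)$ into $[\kappa]^{\lambda}$ lying in $M$. Let $\dot{\mathcal{U}}$ name the generic ultrafilter and let $\dot c$ name the characteristic function of $j[\dot{\mathcal{U}}]$, which is forced to be a total coloring of $[\check\kappa]^{\check\lambda}$ into $2\le\mu$ colors. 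If some condition $A^*$ forced $\dot c$ to agree with a ground-model coloring $c^*$ --- even only at the sets $j(Y)$, all of which lie in $M$ --- then in the extension we would have $\mathcal{U}=\{Y\sse\om:c^*(j(Y))=1\}\in M$; but the generic ultrafilter is never in $M$, since the separative quotient $\mathcal{P}(\om)/\Fin$ is atomless. So no condition purely decides all values of this $\dot c$, and your reduction collapses. Note that this does not contradict Theorem \ref{thm.1.0}: barrenness gives agreement between $M$ and $N$ only on sets of ordinals and wellordered sequences of ground-model elements, and $[\kappa]^{\lambda}$ need not be wellorderable here (under AD it is not), so a coloring of $[\kappa]^{\lambda}$ in the extension need not lie in $M$. (A further internal problem: the ``stock of deciding sets'' you want to feed into EP is indexed by $[\om]^{\om}$, which is not wellorderable in the intended models, so EP as formulated never applies to it.)

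The theorem is true only because of the weakening that your lemma overshoots, and this is exactly how the paper proceeds, via its generalization Theorem \ref{thm.spcpres}. For each condition $p$ one defines \emph{in $M$} the coloring $\pi_p:[\kappa]^{\lambda}\ra\mu+1$ recording the value of $\dot f(\check A)$ that $p$ decides, with an extra color $\mu$ meaning ``undecided''; LU --- together with the surjection onto $[\kappa]^{\kappa}$, which is needed precisely at this step to convert the reals output by LU into sets of ordinals --- uniformizes a choice $p\mapsto B(p)$ of homogeneous sets; the hypothesis $\lambda=\om\lambda$ is used through the $\Omega$-operation (limits of $\om$-blocks), in tandem with the countability of the $=^*$-classes, to convert $B$ into an invariant assignment $\Phi$ (Proposition \ref{prop_ctbleqclass}); and EP makes $\Phi$ constant, say with value $A$, below a single condition $p_2$ (Proposition \ref{part_invariantimpliesramsey}). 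The conclusion is that $p_2$ forces $\dot f$ to be constant on $[\check A]^{\check\lambda}$: any $q\le p_2$ deciding two distinct values at $D,E\in[A]^{\lambda}$ would make $A=\Phi(q)$ inhomogeneous for $\pi_q$. Crucially, the constant value is decided only generically, never in $M$; no ground-model coloring $c^*$ ever appears. Your closing density argument is fine, but everything upstream of it rests on the false pure-decision lemma, so the proposal cannot be repaired along the lines you describe.
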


Forcing with 
 $([\om]^{\om},\sse^*)$  adds an ultrafilter  $\mathcal{U}$ which is {\em Ramsey}, meaning that given any  $l,n\ge 2$,
   $X\in\mathcal{U}$, and coloring $c:[X]^n\ra l$, there is some $U\in\mathcal{U}$ with $U\sse X$ such that the restriction of $c$ to $[U]^n$ is constant.
 This is written as
 \begin{equation}
 \mathcal{U}\ra(\mathcal{U})^n_l.
 \end{equation}
 It is shown  in Proposition 4.1 of \cite{Henle/Mathias/Woodin85} that 
 the hypotheses,  EP + LU,  of Theorem \ref{thm.3.3}  hold in $V$ if it satisfies   AD + $V=L(\mathbb{R})$.
 In this case, $V[\mathcal{U}]$   preserves the strong partition cardinals in  $V$ mentioned in that theorem.

One cannot help but wonder, how important is the Ramseyness of the generic ultrafilter $\mathcal{U}$ for  these results?
Are there  forcings which add non-Ramsey ultrafilters for which the consequences of 
Theorems \ref{thm.1.0} and \ref{thm.3.3}
still hold?
The  main tools of the proofs,  $\om\ra(\om)^{\om}$, EP and LU, 
 involve   properties   of $[\om]^{\om}$  as a topological space.
Thus, we surmised that other forcings  with similar  topological  properties 
would likely add ultrafilters with barren extensions.
This turned out to be the case.
In this paper, we prove that several  collections  of 
 $\sigma$-closed  partial orders  forcing  non-Ramsey 
 ultrafilters  produce 
the same conclusions as Theorems \ref{thm.1.0} and \ref{thm.3.3}.

 The natural place to look for forcings satisfying analogues of $\om\ra(\om)^{\om}$  is
  topological Ramsey spaces, as such spaces, by definition,   satisfy analogues of this infinitary partition relation for definable sets. 
  These spaces are defined in Section \ref{sec.2}, which provides   basic background  and  their connection with forced ultrafilters.
Topological Ramsey  spaces have been shown to form  dense subsets 
of many  forcings which add ultrafilters satisfying weak partition relations.  
This  includes  constructions  in 
  \cite{DobrinenJSL16}, \cite{DobrinenJML16}, 
   \cite{Dobrinen/Mijares/Trujillo17}, \cite{Dobrinen/Todorcevic14}, and  \cite{Dobrinen/Todorcevic15}, which were motivated by questions on  exact  
 Rudin-Keisler and Tukey structures  below such ultrafilters.
 An exposition of this area is found in \cite{DobrinenSEALS19}.
In this paper,  we utilize  topological Ramsey space techniques  to extend  results of Henle, Mathias, and Woodin. We first state the results for general forcing posets.

The following  two theorems  extend Theorems \ref{thm.1.0}
and \ref{thm.3.3}.
For the notions of 
extended coarsened poset and the Left-Right Axiom 
assumed in the next theorem, see
Definitions \ref{defn.sscp} and \ref{LRA}. 
We say that {\em all  cubes  of a poset $\lgl X,\le\rgl$ are Ramsey} if the following holds:
 Given 
 $x \in X$, a positive integer $k$,  and  a coloring
 $c: \{y\in X:y\le x\}\to k$,
 there is some $y\le x$ 
 such that $c \restriction \{z\in X:z\le y\}$ is constant.

 \begin{theorem}\label{thm.barren}
 Let
$M$ be a transitive  model of ZF. 
In $M$, let $\lgl X,\le\rgl$ be a forcing poset 
and assume that    $\le^*$ is a $\sigma$-closed coarsening
 of $\le$ such that $\lgl X,\le\rgl$ and $\lgl X,\le^*\rgl$ have isomorphic separative quotients. 
Suppose that this coarsening (or a forcing equivalent one) satisfies   the Left-Right Axiom, and suppose
all cubes  of $\lgl X,\le\rgl$ are Ramsey.
 Let $N$ be a 
generic extension of $M$ by the forcing
 $\langle X, \le \rangle$.
 Then $M$ and $N$ have the same sets of ordinals;
 moreover, every sequence in $N$ of elements of $M$ lies in $M$.
\end{theorem}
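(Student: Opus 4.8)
The plan is to recast the two conclusions as a single distributivity statement and to prove it by transfinite induction on the length of the sequence. A forcing adds no new set of ordinals, and no new well-ordered sequence of ground-model elements, exactly when it is $\lambda$-distributive for every ordinal $\lambda$; equivalently, for every $\lambda$ and every $M$-indexed family $\lgl D_\al:\al<\lambda\rgl$ of dense open subsets, $\bigcap_{\al<\lambda}D_\al$ is dense. So it suffices to show: for every ordinal $\lambda$, every name $\dot f$ for a function $\lambda\ra M$, and every condition $x$, there are $y\le x$ and $g\in M$ with $y\forces\dot f=\check g$. Because $\lgl X,\le\rgl$ and $\lgl X,\le^*\rgl$ have isomorphic separative quotients, they share a Boolean completion and hence the same forcing relation, so I will pass freely between the two orders, using $\le$ when the Ramsey/fusion combinatorics are needed and $\le^*$ when I need countable lower bounds; in particular $y\le^*y'$ implies $y$ forces everything $y'$ forces. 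The induction then has three kinds of steps.

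The routine cases use only $\sigma$-closure together with the shared separative quotient. Finite $\lambda$ is immediate. For a successor $\lambda=\delta+1$, apply the inductive hypothesis to $\dot f\rst\delta$ to obtain $y'\le x$ and $g'$ with $y'\forces\dot f\rst\delta=\check{g'}$, then extend $y'$ to some $y$ deciding $\dot f(\delta)$ and adjoin that value to $g'$. For $\lambda$ of cofinality $\om$, fix a cofinal $\lgl\lambda_n:n<\om\rgl$ and build a $\le^*$-decreasing sequence $\lgl y_n\rgl$ with $y_n\forces\dot f\rst\lambda_n=\check{g_n}$ (inductive hypothesis at each $\lambda_n$, the $g_n$ cohering); the $\sigma$-closure of $\le^*$ yields a lower bound $y$, which forces $\dot f\rst\lambda_n=\check{g_n}$ for all $n$, hence $\dot f=\check g$ for $g=\bigcup_n g_n\in M$.

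The crux is $\cf(\lambda)>\om$, the only case genuinely needing the Ramsey and Left-Right hypotheses, since $\sigma$-closure alone never yields $\om_1$-distributivity. Here the inductive hypothesis makes each $D_\al=\{z:z\text{ decides }\dot f\rst\al\}$ dense open, but a priori no single condition lies below all of them. The plan is to promote ``all cubes are Ramsey'' — a one-step, finite-colouring pigeonhole principle on cones — to the infinite-dimensional partition behaviour underlying $\om\ra(\om)^\om$, using the Left-Right Axiom as the amalgamation/fusion engine (stem on the left, tail on the right). Concretely, I would run a fusion of length $\om$ along $\le$ in which, at the $n$-th stage, the Ramsey property of the current cone is applied to the finite colouring recording the decided value of the next relevant coordinate of $\dot f$, passing to a monochromatic subcone; the Left-Right Axiom should guarantee that these successive homogenizations amalgamate into a single master condition $y$ below the whole fusion, below which the decision of $\dot f$ is read off uniformly and continuously. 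Combined with the inductive hypothesis on proper initial segments, this yields a single $g\in M$ with $y\forces\dot f=\check g$, so $y\in\bigcap_{\al<\lambda}D_\al$ and the induction closes. This is exactly the role EP plays for Henle, Mathias, and Woodin: it is what keeps well-ordered (here uncountable) intersections of positive sets positive.

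The main obstacle is precisely this $\cf(\lambda)>\om$ step, and within it the tension that the Ramsey hypothesis homogenizes only finitely many values at a time while $\sigma$-closure supplies only $\om$-length lower bounds, yet uncountably many coordinates must be decided by one condition. I expect the delicate point to be verifying that a length-$\om$ fusion with one Ramsey homogenization per stage already pins down the \emph{entire} name — that is, that homogenizing the cone below $y_n$ freezes the behaviour of a whole tail of coordinates rather than a single one, so that the master condition's continuous reading of names covers all $\lambda$ coordinates. Checking that the fusion limit exists, lies below $x$, and forces the homogenized values will hinge on the exact form of Definition \ref{LRA}, and this, rather than the reformulation or the routine cases, is where the real argument lives.
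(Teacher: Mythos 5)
Your reformulation as all-$\lambda$ distributivity and your routine cases (finite, successor, countable cofinality via $\sigma$-closure of $\le^*$) are fine, but the proof has a genuine gap exactly where you locate it: the case $\cf(\lambda)>\om$ is not an argument but a hope, and the mechanism you propose for it would not work as described. First, ``the finite colouring recording the decided value of the next relevant coordinate of $\dot f$'' is not a finite colouring: the decided value of $\dot f(\al)$ ranges over all of $M$, so the hypothesis that all cubes are Ramsey --- which applies only to colourings $c:[x]\to k$ with $k$ finite --- gives you nothing here. Second, the Left-Right Axiom is not a fusion engine; it consists of three specific amalgamation clauses (Definition \ref{LRA}), and nothing in it lets you glue $\om$ successive homogenizations into a single master condition, nor is there any mechanism by which homogenizing one cone ``freezes a whole tail of coordinates.'' Since $\sigma$-closure alone can never yield $\om_1$-distributivity, something genuinely new is required at this step, and your sketch does not supply it.

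The paper's actual proof supplies it with a device absent from your proposal, and it needs no induction on $\lambda$ and no cofinality split at all. Assume toward a contradiction that some $p_0\forces\dot f:\check\lambda\to\check M$ has no extension forcing $\dot f\in\check M$. Then every $p\le^* p_0$ has a \emph{least} coordinate $\varphi(p)<\lambda$ whose value it does not decide, and $\varphi$ is $=^*$-invariant. Now colour each $p$ in a cube $[p_1]$ with one of only \emph{three} colours according to whether $\varphi(\mbox{Left}(p))$ is less than, equal to, or greater than $\varphi(\mbox{Right}(p))$; a single application of the Ramsey hypothesis gives a homogeneous cube $[p_2]$. Clauses 2a), 2b) of the LRA force the constant colour to be ``equal''; then, taking $x,y\le p_2$ deciding $\dot f(\varphi(p_2))$ incompatibly and applying clause 3), one manufactures $z\in[p_2]$ with $\varphi(\mbox{Left}(z))>\varphi(p_2)=\varphi(\mbox{Right}(z))$, i.e.\ of colour ``greater,'' a contradiction; hence conditions deciding all of $\dot f$ are dense. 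It is this least-undecided-coordinate function and the three-way comparison colouring --- turning an unbounded, $M$-valued decision problem into a finite colouring that the Ramsey hypothesis can actually see --- that your fusion sketch is missing, and without it (or something equally substantive) your induction cannot close.
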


In Lemma \ref{lem.ADimpliesRamsey}, 
we prove that if either AD$_{\mathbb{R}}$ or AD$^+$ + $V=\mathcal{P}(\mathbb{R})$ hold, then 
 all subsets of a topological Ramsey space are Ramsey, which makes such spaces a source of natural examples producing barren extensions.

 In Section \ref{sec.4}, we prove the extension of Theorem \ref{thm.3.3}.
 In the following, 
 LU$(\mbb{P})$ and EP$(\mbb{P})$ are generalizations of the axioms LU and EP of Henle, Mathias, and Woodin (see Definitions \ref{def.LUP} and \ref{def.EPP}).
 A subset $S$ of a partial ordering $\lgl X,\le\rgl$ is {\em Ramsey} if for each $p\in X$, there exists a $q\le p$ such that $\{r\in X:r\le q\}$ is either contained in or disjoint from $S$.

\begin{theorem}\label{thm.spcpres}
Suppose $\kappa \rightarrow (\kappa)^\lambda_\mu$,
 where $\kappa, \lambda, \mu$ are non-zero ordinals such that
 $\lambda = \omega \lambda \le \kappa$ and
 $2 \le \mu < \kappa$.
Suppose also that there is a surjection
 from ${^\omega 2}$ to $[\kappa]^\kappa$.
Let $\mbb{P} = \langle X, \le, \le^* \rangle$
 be a coarsened poset such that
 $\mbox{EP}(\mbb{P})$ and each $=^*$-equivalence class is countable.
Assume every $S \subseteq X$ is Ramsey.
If $\mbox{LU}(\mbb{P})$ holds and $\langle X, \le \rangle$  adds
no new sets of ordinals,
 then $\langle X, \le \rangle$ forces
 $\kappa \rightarrow (\kappa)^\lambda_\mu$.
\end{theorem}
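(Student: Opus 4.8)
The plan is to adapt the proof of Theorem \ref{thm.3.3} to the abstract poset $\mbb{P}=\langle X,\le,\le^*\rangle$, with the Ramsey property of $X$ playing the role that $\om\ra(\om)^\om$ plays there. Fix a condition $p\in X$ and a name $\dot c$ with $p\forces \dot c\colon[\kappa]^\lambda\to\mu$; I must produce $q\le p$ and a name $\dot K$ with $q\forces\dot K\in[\kappa]^\kappa$ and $q\forces\dot c\re[\dot K]^\lambda$ constant. The first observation is an absoluteness one: since $\langle X,\le\rangle$ adds no new sets of ordinals and $\kappa$ is a cardinal, each element of $[\kappa]^\lambda$, of $[\kappa]^\kappa$, and of ${}^\om 2$ is coded by a ground-model set of ordinals, so all three sets, together with the given surjection $e\colon{}^\om 2\to[\kappa]^\kappa$, are the same in $M$ and in $N$. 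What is \emph{not} automatically in $M$ is the coloring $c$ itself: because $[\kappa]^\lambda$ need not be wellorderable in $M$, the function $c$ cannot in general be coded by a set of ordinals, and this is precisely why the conclusion is nontrivial.

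The second step is to pull the coloring back onto the forcing. Using $\mathrm{LU}(\mbb{P})$ I would obtain a continuous reading of the name: below a suitable condition, the value $\dot c(a)$ is computed, uniformly in $a\in[\kappa]^\lambda$, by a ground-model function of a real coding the relevant condition. The surjection $e$ supplies the bridge between the forcing and $\kappa$, the natural candidate for the homogeneous set being $\dot K=e(\dot r)$ for $\dot r$ the generic real; the hypothesis that every $=^*$-class is countable is what lets me code conditions by reals and keep this reading continuous. Exploiting $\lambda=\om\lambda$ to break a $\lambda$-sequence from $\kappa$ into $\om$-blocks, I would assemble from this reading a genuine ground-model coloring $\hat c\colon[\kappa]^\lambda\to\mu$ whose value at $a$ is the color the generic is bound to force.

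The third step homogenizes $\hat c$ in $M$: since $\kappa\ra(\kappa)^\lambda_\mu$ holds in $M$, there is $K\in[\kappa]^\kappa$ with $\hat c\re[K]^\lambda$ constant, and I then use ``every $S\sse X$ is Ramsey'' to homogenize the reading in the $X$-direction below a single $q\le p$. The main obstacle is reconciling these two directions. For a fixed $a\in[K]^\lambda$ the reading yields only a Ramsey-positive set of conditions deciding $c(a)$; to obtain one condition deciding \emph{all} of them to the common value I must intersect these sets over the index set (made wellordered via $e$) and remain Ramsey-positive, so that the Ramsey property can then refine the intersection to a single $q$. This is exactly what $\mathrm{EP}(\mbb{P})$ delivers, namely that a wellordered intersection of Ramsey-positive sets is again Ramsey-positive. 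Once $q$ is found, genericity together with the absoluteness of the first step shows that $q\forces\dot c\re[K]^\lambda$ is constant with $K\in[\kappa]^\kappa$; since $p$ was arbitrary, the partition relation is forced.
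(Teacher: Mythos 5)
Your proposal inverts the paper's order of operations in a way that cannot be repaired from the stated hypotheses, and the failure point is exactly the step you flag as ``the main obstacle.'' You want first to extract a single ground-model coloring $\hat c$, homogenize it to get $K \in [\kappa]^\kappa$, and then, for each $a \in [K]^\lambda$, intersect the sets of conditions forcing $\dot c(\check a)$ to take the common value, invoking $\mbox{EP}(\mbb{P})$ over ``the index set (made wellordered via $e$).'' But $\mbox{EP}(\mbb{P})$ applies only to \emph{well-ordered} sequences of invariant R$^+$ sets, and $[K]^\lambda$ is not well-orderable here: a surjection from ${^\om 2}$ onto $[\kappa]^\kappa$ transfers no well-ordering (in the intended models, ${^\om 2}$ itself carries no well-ordering), and your own first step concedes that $[\kappa]^\lambda$ need not be well-orderable --- so the proposal is internally inconsistent on this point. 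Moreover, $\hat c$ is not well-defined in the first place: no single condition need decide $\dot c(\check a)$, and different conditions force different values, so ``the color the generic is bound to force'' has no meaning; and for fixed $a$ and fixed $\zeta$, the set of conditions forcing $\dot c(\check a) = \check\zeta$ need not be dense, hence need not be R$^+$, so EP would not apply to these sets even if the index set were well-ordered. Two smaller misreadings: the countability of the $=^*$-classes is not about ``coding conditions by reals'' (LU is stated for arbitrary posets and needs no coding), and the homogeneous set is a ground-model set, not a name $e(\dot r)$ evaluated at the generic.

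The paper's proof reverses your quantifiers so that the only transfinite intersection needed is indexed by ordinals. For each condition $p$ it defines a ground-model coloring $\pi_p : [\kappa]^\lambda \to \mu + 1$, where $\pi_p(A)$ is the value $p$ forces for $\dot f(\check A)$ if $p$ decides it, and a dummy color $\mu$ otherwise; this extra ``undecided'' color is what replaces your ill-defined $\hat c$. The ground-model relation $\kappa \to (\kappa)^\lambda_{\mu+1}$ guarantees that each $\pi_q$ (composed with the $\om$-block map $\Omega$, using $\lambda = \om\lambda$) has homogeneous sets in $[\kappa]^\kappa$, so LU can uniformize the relation ``$\psi(r)$ is homogeneous for the coloring attached to $q$,'' producing, below some $p_1 \le p_0$, a selection $q \mapsto B(q)$ of homogeneous sets. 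The countable $=^*$-classes are used precisely here: diagonalizing over the class of $q$ converts this selection into an \emph{invariant} function $\Phi$ with $\Phi(q)$ still homogeneous for $\pi_q$. Then EP is applied to the family $C_\alpha = \{ q \le p_1 : \alpha \in \Phi(q) \}$, indexed by $\alpha < \kappa$ --- a genuinely well-ordered family of invariant Ramsey sets --- which together with the Ramseyness of all subsets of $X$ makes $\Phi$ constant, say with value $A$, below some $p_2$. Finally, $A$ is forced homogeneous by a density argument: any $q \le p_2$ forcing two distinct values on $[A]^\lambda$ would make $\pi_q$ non-constant on $[\Phi(q)]^\lambda$, contradicting homogeneity. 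Your absoluteness observation (no new sets of ordinals, so $[\kappa]^\lambda$ is the same in $M$ and $N$) is correct and is indeed where that hypothesis enters, but the rest of the argument must be restructured as above.
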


It follows from these theorems that topological Ramsey spaces with natural $\sigma$-closed coarsenings 
force 
  barren extensions containing ultrafilters and  preserving the strong partition cardinals in the ground model.
By an {\em axiomatized} topological Ramsey space, we mean  one that satisfies the four axioms of Todorcevic in \cite{TodorcevicBK10} and is closed as a subspace of an appropriate product space (see Definition \ref{def.axiomatizedtRs}).
All known examples of topological Ramsey spaces are axiomatized. 
We will  say that a forcing $\mathbb{P}$ is 
 {\em Ramsey-like}
 if it is forcing equivalent to some axiomatized topological Ramsey space with an ($\sigma$-closed) extended  coarsening  satisfying the Left-Right Axiom.
 The next  theorem follows from the previous two.

\begin{theorem}\label{thm.maintRs}
Assume $M$ satisfies ZF +  either 1) AD$_{\mathbb{R}}$ or    2) AD$^+ + V=L(\mathcal{P}(\mathbb{R}))$.
Let $\mbb{P}$  be a Ramsey-like  forcing, and let 
 $\mathcal{U}$ be  an    ultrafilter  forced  by   $\mbb{P}$.
 Then  $M$ and $M[\mathcal{U}]$ have the same sets of ordinals; moreover, every sequence  in $M[\mathcal{U}]$
 of elements of   $M$ lies in $M$.
 If, further, 
 the  $\sigma$-closed coarsening $\le^*$ has  countable $=^*$-equivalences classes,
 then   
 $\kappa \rightarrow (\kappa)^\lambda_\mu$ holds in $M[\mathcal{U}]$
  whenever
  $\kappa \rightarrow (\kappa)^\lambda_\mu$ holds in  $M$, 
 where $\kappa, \lambda, \mu$ are non-zero ordinals such that
 $\lambda = \omega \lambda \le \kappa$,
 $2 \le \mu < \kappa$, and 
 there is a surjection in $M$
 from ${^\omega 2}$ to $[\kappa]^\kappa$.
 \end{theorem}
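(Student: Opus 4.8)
The plan is to reduce the statement to Theorems~\ref{thm.barren} and~\ref{thm.spcpres} by verifying their hypotheses under the determinacy assumption on $M$. Since $\mathbb{P}$ is Ramsey-like, I would work in $M$ and fix an axiomatized topological Ramsey space $\langle X,\le\rangle$ (Definition~\ref{def.axiomatizedtRs}) that is forcing equivalent to $\mathbb{P}$, together with the $\sigma$-closed extended coarsening $\le^*$ satisfying the Left-Right Axiom (Definitions~\ref{defn.sscp} and~\ref{LRA}). Let $N$ be a generic extension of $M$ by $\langle X,\le\rangle$; since $\le$ and $\le^*$ have isomorphic separative quotients they yield the same extension. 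Because the forced ultrafilter $\mathcal{U}$ and the generic filter on $\langle X,\le\rangle$ are interdefinable, $M[\mathcal{U}] = N$; I would record this identification first, so that it suffices to prove the two conclusions for $N$.

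For the first conclusion I would feed the determinacy hypothesis into Lemma~\ref{lem.ADimpliesRamsey}: whether $M$ satisfies AD$_{\mathbb{R}}$ or AD$^+ + V = L(\mathcal{P}(\mathbb{R}))$, every subset of $X$ is Ramsey. From this I would deduce that all cubes of $\langle X,\le\rangle$ are Ramsey by a finite induction on the number of colors: given $c:\{y\le x\}\to k$, apply Ramseyness to the color class $\{y\le x : c(y)=0\}$ to obtain $y_0\le x$ whose cube either lies inside it (done) or avoids it (so $c$ uses only $k-1$ colors below $y_0$), and iterate. The remaining structural hypotheses of Theorem~\ref{thm.barren}, namely that $\le^*$ is a $\sigma$-closed coarsening with the same separative quotient as $\le$ and that (a forcing equivalent coarsening of) it satisfies the Left-Right Axiom, are precisely what ``Ramsey-like'' provides. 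Theorem~\ref{thm.barren} then gives that $M$ and $N$ have the same sets of ordinals and that every sequence in $N$ of elements of $M$ lies in $M$.

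For the second conclusion I would verify the hypotheses of Theorem~\ref{thm.spcpres}. The relation $\kappa\to(\kappa)^\lambda_\mu$ in $M$, the arithmetic constraints on $\kappa,\lambda,\mu$, and the surjection from ${}^{\omega}2$ onto $[\kappa]^\kappa$ are assumed; countability of the $=^*$-classes is the standing extra hypothesis; ``every $S\subseteq X$ is Ramsey'' is again Lemma~\ref{lem.ADimpliesRamsey}; and ``$\langle X,\le\rangle$ adds no new sets of ordinals'' is the first conclusion just proved. What remains is to establish $\mbox{EP}(\mathbb{P})$ and $\mbox{LU}(\mathbb{P})$ (Definitions~\ref{def.EPP} and~\ref{def.LUP}) from the determinacy hypothesis, in the spirit of Proposition~4.1 of~\cite{Henle/Mathias/Woodin85}. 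Granting these, Theorem~\ref{thm.spcpres} shows that $\langle X,\le\rangle$ forces $\kappa\to(\kappa)^\lambda_\mu$, and via $M[\mathcal{U}]=N$ this holds in $M[\mathcal{U}]$.

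I expect the main obstacle to be the derivation of $\mbox{EP}(\mathbb{P})$ and $\mbox{LU}(\mathbb{P})$ from determinacy, rather than the bookkeeping above. Note that a coloring $c:[\kappa]^\lambda\to\mu$ in $N$ need not be coded by a set of ordinals, since $[\kappa]^\lambda$ need not be wellorderable without full choice, so preservation of the partition relation does \emph{not} follow formally from barrenness and genuinely requires the machinery of Theorem~\ref{thm.spcpres}. For $\mbox{LU}(\mathbb{P})$ the required uniformization is immediate under AD$_{\mathbb{R}}$, where every relation on reals uniformizes, whereas under AD$^+ + V = L(\mathcal{P}(\mathbb{R}))$ it must be extracted from scale and uniformization properties of the relevant definable relations; for $\mbox{EP}(\mathbb{P})$ the closure of completely Ramsey positive sets under wellordered intersections is the delicate determinacy step. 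Pushing the argument of~\cite{Henle/Mathias/Woodin85} through the abstract topological Ramsey space, using the surjection from ${}^{\omega}2$ to transport colorings to Cantor space where the Ramsey and uniformization apparatus operates, is where I would concentrate the effort.
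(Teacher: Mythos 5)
Your reduction has the same skeleton as the paper's proof: the first half is exactly the paper's Theorem \ref{thm.barrenufs} (Lemma \ref{lem.ADimpliesRamsey} feeding into Theorem \ref{thm.barren}; your induction on the number of colors, deriving ``all cubes are Ramsey'' from ``all subsets are Ramsey,'' is the step the paper compresses into a single ``hence''), and the second half is the paper's Theorem \ref{thm.4maintRs}, which verifies the hypotheses of Theorem \ref{thm.spcpres} just as you propose. Your side remark that barrenness alone does not formally yield preservation of $\kappa\ra(\kappa)^{\lambda}_{\mu}$ (since colorings of $[\kappa]^{\lambda}$ need not be coded by sets of ordinals without choice) is also correct.

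The gap is that you never prove $\mbox{LU}(\mbb{P})$ and $\mbox{EP}(\mbb{P})$; you only identify them as the main obstacle, and the route you sketch for them would not close. These two facts are the analytic core of the paper's Section \ref{sec.4}. The paper proves the \emph{continuous} uniformization principle $\mbox{LCU}(\mbb{P})$ (Proposition \ref{Mathias_prop}): under AD$_{\mathbb{R}}$ this is the Uniformization Axiom plus a fusion argument producing continuity on a smaller cube; under AD$^+$ + $V=L(\mathcal{P}(\mathbb{R}))$, where full uniformization \emph{fails}, it is proved by contradiction, using the projective presentation of the space (Lemma \ref{lem.tRsprojectivelypres}) to make ``$\mbox{LCU}(\mbb{P})$ fails'' a $\Sigma^2_1$ statement, reflecting to a Suslin co-Suslin counterexample, and uniformizing that; so even $\mbox{LU}(\mbb{P})$ is not ``immediate'' in case 2). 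More importantly, continuity is not an optional refinement: the paper's proof of $\mbox{EP}(\mbb{P})$ (Proposition \ref{prop.EP}) takes the continuous uniformizing function, uses continuity together with the projective presentation to see that the induced wellfounded relation is $\mathbf{\Sigma}^1_1$, applies the Kunen--Martin theorem to bound its rank below $\omega_1$, and then contradicts Corollary \ref{countable_union_of_rneg_is_rneg} (countable unions of invariant R$^-$ sets are R$^-$, which itself needs the Abstract Ellentuck Theorem and closedness of the space). With only $\mbox{LU}(\mbb{P})$, as in your plan, there is no definability handle for the Kunen--Martin step, so ``pushing the argument of \cite{Henle/Mathias/Woodin85} through'' via the surjection from ${}^{\omega}2$ does not work as stated; this is where the missing work lies.
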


 Instances of Theorem \ref{thm.maintRs}
 are seen in 
   Sections \ref{sec.5} and \ref{sec.6},
where  $\mathcal{U}$ ranges over a large collection of non-Ramsey ultrafilters.
 The following examples are indicative of  the types of  ultrafilters 
for which our results guarantee barren extensions.
First, there  are the Milliken-Taylor ultrafilters studied by Mildenberger in \cite{Mildenberger11} which form  a hierarchy 
extending the 
  stable ordered union ultrafilters of Blass in \cite{Blass87}.
It is shown in  Subsection \ref{subsec.FIN} that  these forcings are Ramsey-like.

  In Subsection \ref{subsec.IS} we  present a property called {\em Independent Sequencing} which,  when satisfied, guarantees that a forcing is Ramsey-like.  
  All  of the  forcings  in the remainder of Section \ref{sec.5} have this property, and so  their generic ultrafilters satisfy Theorem \ref{thm.maintRs}.

Our second class of forcings, seen in 
Subsection \ref{subsec.Laflamme},  is
a collection of forcings 
 constructed  by Laflamme  in \cite{Laflamme89}  which   extend the forcing 
$([\om]^{\om},\sse^*)$ by restricting the partial order 
 to produce  a  hierarchy of ultrafilters 
$\mathcal{U}_{\al}$, $\al<\om_1$, for which the partition relations get weaker and weaker.
Laflamme proved that these ultrafilters form a decreasing chain 
under Rudin-Keisler reduction of order type  $(\al+1)^*$,
where the minimum ultrafilter is Ramsey, and the next one above it is weakly Ramsey. 
For each $1\le k<\om$, $\mathcal{U}_k$ satisfies  the following partition relation: 
Given a coloring  $c$ of $[\om]^2$ into finitely many colors, there is a member of $\mathcal{U}_k$ on which $c$ takes no more than $k+1$ colors. 
All of the ultrafilters $\mathcal{U}_{\al}$ have interesting combinatorial properties, but for $\al$ infinite, there are no finite  bounds for colorings of pairs.

A third collection of  Ramsey-like forcings 
 includes   forcings of Baumgartner and Taylor in \cite{Baumgartner/Taylor78} which generate
 $k$-arrow, not $(k+1)$-arrow ultrafilters, as well as a forcing of Blass in \cite{Blass73} producing a p-point with two Rudin-Keisler-incomparable predecessors.
In Subsection \ref{subsec.DMT}, we present these ultrafilters as well as a  general class of
forcings 
 constructed in \cite{Dobrinen/Mijares/Trujillo17}  which encompass these as special cases.
These forcings are shown to be Ramsey-like, and hence, their forced  ultrafilters satisfy Theorem \ref{thm.maintRs}.

In Section \ref{sec.6}, 
we  investigate another line  of  forcings  of stratified
complexity over
  $([\om]^{\om},\sse^*)$.
Noting that 
  $([\om]^{\om},\sse^*)$ is forcing equivalent to $\mathcal{P}(\om)/\Fin$,
we work with the 
natural hierarchy of Boolean algebras $\mathcal{P}(\om^{\al})/\Fin^{\otimes \al}$, where for $1\le \al<\beta<\om_1$,
the projection of  $\mathcal{P}(\om^{\al})/\Fin^{\otimes \al}$ to  the first $\beta$ coordinates recovers
 $\mathcal{P}(\om^{\beta})/\Fin^{\otimes \beta}$. 
These forcings generate non-p-points for $\al\ge 2$, which still satisfy  weak partition relations.
For example, the generic ultrafilter $\mathcal{G}_2$ forced by 
$\mathcal{P}(\om^{2})/\Fin^{\otimes 2}$
satisfies the following partition property:
Given a coloring $c$ of $[\om^2]^2$ into finitely many colors, 
there is a member of $\mathcal{G}_2$ on which $c$ takes at most four colors. 
Each of these forcings has been shown to contain dense subsets forming topological Ramsey spaces (see \cite{DobrinenJSL16} and \cite{DobrinenJML16}), so  Theorem \ref{thm.barren} holds for these forcings.
However, we do not know if  they preserve strong partition cardinals, since their $=^*$-equivalence classes have cardinality continuum.

\vskip.1in

 \noindent \bf Acknowledgments. \rm
 The authors would like to thank Carlos DiPrisco, Paul Larson, and Adrian Mathias for  their generous discussions which greatly benefited this paper, and the anonymous referee for ways to improve clarity.


\section{Background: Topological Ramsey spaces, infinite partition relations, and associated ultrafilters}\label{sec.2}

A key assumption in the results in  \cite{Henle/Mathias/Woodin85} is the infinitary partition relation $\om\ra(\om)^{\om}$. 
That this holds in models of AD$_{\mathbb{R}}$ or AD$^+$ + $V=L(\mathcal{P}(\mathbb{R})$ is connected with 
a topological characterization of the Ramsey property due to Ellentuck.
Let  $\tau$ be the topology generated by basic open sets of the form
$$
[a,x]=\{y\in[x]^{\om}:a\sqsubset y\},
$$
where $a\in[\om]^{<\om}$ and $x\in[\om]^{\om}$,
and call $[\om]^{\om}$ with  this topology the {\em Ellentuck space}.
Here $a\sqsubset y$ means there is an $n \in \omega$
 such that $a = \{0,1,...,n-1\} \cap y$
 ($a$ is an initial segment of $y$).
Notice that $\tau$ refines the metric topology on the Baire space $[\om]^{\om}$.
Culminating a line of work beginning with Nash-Williams  as to which subsets of the Baire space satisfy an infinite partition relation
(see for instance, \cite{NashWilliams65}, \cite{Galvin/Prikry73} and  \cite{Silver71})
Ellentuck proved the following theorem.

\begin{theorem}[Ellentuck,  \cite{Ellentuck74}]
A subset  $S\sse[\om]^{\om}$  has the property of Baire with respect to the Ellentuck topology if and only if the following holds:
For each non-empty  Ellentuck basic open set $[a,x]$, there is a $y\in [a,x]$  such that 
either $[a,y]\sse S$ or else $[a,y]\cap S=\emptyset$.
\end{theorem}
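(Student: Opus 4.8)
The right-hand condition is exactly the assertion that $S$ is \emph{completely Ramsey}, so the theorem identifies the completely Ramsey subsets of $[\om]^{\om}$ with those having the Baire property in the Ellentuck topology $\tau$. The plan is to prove the two implications separately after isolating the notion of a \emph{Ramsey null} set: $S$ is Ramsey null if for every nonempty $[a,x]$ there is $y\in[a,x]$ with $[a,y]\cap S=\emptyset$. The first task is to show that for $S\sse[\om]^{\om}$ the three notions of smallness coincide, namely that $S$ is Ramsey null iff $S$ is $\tau$-nowhere dense iff $S$ is $\tau$-meager. Since the basic $\tau$-open sets are exactly the $[a,x]$, the equivalence of ``Ramsey null'' and ``nowhere dense'' is immediate from unwinding the definitions; the substantive point is that a countable union of Ramsey null sets is again Ramsey null, which I would establish by a diagonalization (fusion): given Ramsey null sets $S_n$ and a condition $[a,x]$, build a decreasing sequence $x=x_0\contains x_1\contains\cdots$ of members of $[a,x]$ together with integers $n_k\in x_k$, with $x_{k+1}$ chosen to kill $S_k$, and then diagonalize to a single $y\in[a,x]$ whose every extension avoids $\bigcup_n S_n$.

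The combinatorial heart, and the step I expect to be the main obstacle, is showing that every $\tau$-open set $O$ is completely Ramsey. Here I would run the Galvin--Prikry accept/reject analysis: fixing $O$, say that $x$ \emph{accepts} $a$ if $[a,x]\sse O$ and that $x$ \emph{rejects} $a$ if no $w\in[a,x]$ accepts $a$. Two observations drive the argument: acceptance is inherited by one-step extensions $a\cup\{n\}$, and if $x$ rejects $a$ then, by a fusion over the possible next elements, there is $y\in[a,x]$ that rejects $a\cup\{n\}$ for every $n\in y$ above $\max a$. Iterating and diagonalizing produces $y\in[a,x]$ deciding $a$ hereditarily. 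If $y$ accepts $a$ then $[a,y]\sse O$; if instead $y$ rejects $a$ together with all $a\cup s$ for finite $s\sse y$, then $[a,y]\cap O=\emptyset$, for otherwise openness of $O$ would supply some $w\in[a,y]$ and finite $b$ with $a\sqsubset b\sqsubset w$ and $[b,w]\sse O$, exhibiting a witness that $y$ does not reject $b$ --- a contradiction. This fusion in the ``rejects'' case is the technically delicate part.

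With these in hand the two directions close quickly. The completely Ramsey sets form an algebra: closure under complement is immediate from the defining dichotomy, and closure under finite intersection follows from a two-step application of the property (decide $S_1$ by some $y_1\in[a,x]$, then $S_2$ by some $y_2\in[a,y_1]$, and note $[a,y_2]$ then decides both, since the $S_1$-decision is inherited by the subcube). Since this algebra contains all $\tau$-open sets by the previous paragraph and all Ramsey null sets (which decide via the disjoint alternative), and since any $S$ with the Baire property is of the form $O\,\triangle\,M$ with $O$ open and $M$ meager $=$ Ramsey null, it follows that every Baire-property set is completely Ramsey. For the converse, suppose $S$ is completely Ramsey and set $O=\mathrm{int}_{\tau}(S)$. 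Given any $[a,x]$, pick $y\in[a,x]$ deciding $S$: on the alternative $[a,y]\cap S=\emptyset$ we have $[a,y]\cap(S\m O)=\emptyset$ trivially, while on the alternative $[a,y]\sse S$ the set $[a,y]$ is $\tau$-open and contained in $S$, hence contained in $O$, so again $[a,y]\cap(S\m O)=\emptyset$. Thus $S\m O$ is Ramsey null, i.e.\ meager, and $S=O\cup(S\m O)$ has the Baire property, completing the proof.
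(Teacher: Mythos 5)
The paper does not prove this statement: it is quoted verbatim as background, with the proof deferred to Ellentuck's original paper \cite{Ellentuck74} (just as the Abstract Ellentuck Theorem is later cited from \cite{TodorcevicBK10}). So there is no in-paper argument to compare yours against; your proposal has to stand on its own. What you outline is the classical route --- the Galvin--Prikry accept/reject (``combinatorial forcing'') lemma for Ellentuck-open sets, fusion for countable unions of Ramsey null sets, the algebra of completely Ramsey sets, and the observation that $S \m \mathrm{int}_\tau(S)$ is Ramsey null --- and that route does prove the theorem. It is worth noting that where this paper \emph{does} prove Ramsey-ness of sets (the unnumbered lemma on Suslin sets and Lemma \ref{lem.ADimpliesRamsey}), it uses an entirely different technique: Mathias-style forcing over an inner model $L[T,q]$, invoking the Prikry and Mathias properties. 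Your direct combinatorial argument is the more elementary and self-contained of the two, and is the one that works in ZF without tree representations.

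There are, however, two genuine soft spots in your execution. First, your claim that the equivalence of ``Ramsey null'' and ``$\tau$-nowhere dense'' is ``immediate from unwinding the definitions'' is only half true: Ramsey null $\Rightarrow$ nowhere dense is immediate, but the converse is not. Nowhere density of $S$ hands you, inside $[a,x]$, some subcube $[b,y]$ with a \emph{longer} stem $b$ missing $S$; Ramsey nullity demands the \emph{same} stem $a$, and recovering the stem is precisely the content of the open-set lemma (apply it to the dense open set $U=[\om]^{\om}\m \mathrm{cl}_\tau(S)$: the alternative $[a,y]\cap U=\emptyset$ is impossible since $\mathrm{cl}_\tau(S)$ has empty interior, so $[a,y]\sse U$). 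As written, your ``first task'' silently depends on your ``second task,'' so the order of the two must be reversed. Second, the countable-union fusion as described --- choose $x_{k+1}$ to ``kill $S_k$'' and diagonalize --- would fail if taken literally with stem $a$: a point $w\in[a,y]$ may contain some of the early diagonal elements $n_0,\dots,n_{k-1}$, so $w$ lies below $x_{k+1}$ only with respect to the stem $b=w\cap(a\cup\{n_0,\dots,n_{k-1}\})$, not the stem $a$. The standard repair is to shrink finitely many times at stage $k$ so that $[b,x_{k+1}]\cap S_j=\emptyset$ for \emph{every} $j\le k$ and \emph{every} stem $b$ with $a\sqsubseteq b\sse a\cup\{n_0,\dots,n_{k-1}\}$; this is legitimate exactly because Ramsey nullity is quantified over all stems. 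With those two repairs, your proof is complete.
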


In particular, for each subset $S\sse[\om]^{\om}$ with the property of Baire in the Ellentuck topology, for any  $x\in [\om]^{\om}$ there is some $y\in[x]^{\om}$ such that  
$[y]^{\om}$ is either contained in $S$ or disjoint from $S$. 
Thus, $\om\ra(\om)^{\om}$ holds models of ZF where all sets of reals are sufficiently definable.

Carlson and Simpson in \cite{Carlson/Simpson90} extracted properties  responsible for infinitary partition relations  on more general spaces, for instance,  spaces of infinite sequences of finite words, and called such spaces topological Ramsey spaces.
Building on their work, Todorcevic presented four axioms in \cite{TodorcevicBK10} which are responsible for similar infinitary partition relations on a wider array of topological spaces. 
The next subsection provides  the minimal  background necessary for understanding this paper.


\subsection{Topological Ramsey spaces}\label{subsec.tRs}

Most of the material in this subsection comes from   Chapter 5  in \cite{TodorcevicBK10}, with a few  new definitions which will help the exposition of this paper.
Axioms {\bf A.1}\rm --{\bf A.4 } \rm  below
are defined for triples
$(\mathcal{R},\le,r)$
of objects with the following properties:
$\mathcal{R}$ is a nonempty set,
$\le$ is a quasi-ordering on $\mathcal{R}$,
 and $r:\mathcal{R}\times\om\ra\mathcal{AR}$ is a  map producing the sequence $(r_n(\cdot)=r(\cdot,n))$ of  restriction maps, where
$\mathcal{AR}$ is  the collection of all finite approximations to members of $\mathcal{R}$.
For $u\in\mathcal{AR}$ and $X\in\mathcal{R}$,
\begin{equation}
[u,X]=\{Y\in\mathcal{R}:Y\le X\mathrm{\ and\ }(\exists n)\ r_n(Y)=u\}.
\end{equation}

\begin{enumerate}
\item[\bf A.1]\rm
\begin{enumerate}
\item[(1)]
$r_0(X)=\emptyset$ for all $X\in\mathcal{R}$.\vskip.05in
\item[(2)]
$X\ne Y$ implies $r_n(X)\ne r_n(Y)$ for some $n$.\vskip.05in
\item[(3)]
$r_m(X)=r_n(Y)$ implies $m=n$ and $r_k(X)=r_k(X)$ for all $k<n$.\vskip.1in
\end{enumerate}
\end{enumerate}

Let $\mathcal{AR}_n=\{r_n(X):X\in\mathcal{R}\}$.
It follows from \bf A.1 \rm(1) that $\mathcal{AR}_0=\{\emptyset\}$.
By  \bf A.1 \rm(3), the sets  $\mathcal{AR}_m$ and $\mathcal{AR}_n$ are disjoint whenever $m\ne n$. 
For each $u \in \mc{AR}_n$ and $m \le n$, let $r_m(u)$ be defined to be
 $r_m(X)$ where $X$ is any element of $\mc{R}$ such that $r_n(X) = u$.
Given $u \in \mc{AR}$,
 let $|u|$ denote the length of $u$.
That is, $|u|$ equals the integer $n$ such that $u \in \mc{AR}_n$.
Said another way, $|u|$ is the integer $n$ such that $r_n(u) = u$.
For $u,v\in\mathcal{AR}$, we write $u\sqsubset v$ exactly when 
$r_{|u|}(v)=u$.
For $n>|u|$, let  $r_n[u,X]$  denote the collection of all $v\in\mathcal{AR}_n$ such that $u\sqsubset v$ and $v\le_{\mathrm{fin}} X$.

\begin{enumerate}
\item[\bf A.2]\rm
There is a quasi-ordering $\le_{\mathrm{fin}}$ on $\mathcal{AR}$ such that\vskip.05in
\begin{enumerate}
\item[(1)]
$\{v\in\mathcal{AR}:v\le_{\mathrm{fin}} u\}$ is finite for all $u\in\mathcal{AR}$,\vskip.05in
\item[(2)]
$Y\le X$ iff $(\forall n)(\exists m)\ r_n(Y)\le_{\mathrm{fin}} r_m(X)$,\vskip.05in
\item[(3)]
$\forall u,v,y\in\mathcal{AR}[y\sqsubset v\wedge v\le_{\mathrm{fin}} u\ra\exists x\sqsubset u\ (y\le_{\mathrm{fin}} x)]$.\vskip.1in
\end{enumerate}
\end{enumerate}

Given an $X\in\mathcal{R}$, we define $\mathcal{AR}\re X$ to be the set of all finite approximations to members $Y\in \mathcal{R}$ such that  $Y\le X$. 
Note that \bf A.2 \rm(1) and (2) imply that for any $X\in\mathcal{R}$, 
\begin{equation}
\mathcal{AR}\re X=\{u\in\mathcal{AR}:(\exists m)\, u\le_{\mathrm{fin}} r_m(X)\}
\end{equation}
and hence, $\mathcal{AR}\re X$ is countable.
This fact will be important throughout the paper.

The number $\depth_X(u)$ is the least $n$, if it exists, such that $u\le_{\mathrm{fin}}r_n(X)$.
If such an $n$ does not exist, then we write $\depth_X(u)=\infty$.
If $\depth_X(u)=n<\infty$, then $[\depth_X(u),X]$ denotes $[r_n(X),X]$.

\begin{enumerate}
\item[\bf A.3] \rm
\begin{enumerate}
\item[(1)]
If $\depth_X(u)<\infty$ then $[u,Y]\ne\emptyset$ for all $Y\in[\depth_X(u),X]$.\vskip.05in
\item[(2)]
$Y\le X$ and $[u,Y]\ne\emptyset$ imply that there is $Y'\in[\depth_X(u),X]$ such that $\emptyset\ne[u,Y']\sse[u,Y]$.\vskip.1in
\end{enumerate}
\end{enumerate}

\begin{enumerate}
\item[\bf A.4]\rm
If $\depth_X(u)<\infty$ and if $\mathcal{O}\sse\mathcal{AR}_{|u|+1}$,
then there is $Y\in[\depth_X(u),X]$ such that
$r_{|u|+1}[u,Y]\sse\mathcal{O}$ or $r_{|u|+1}[u,Y]\sse\mathcal{O}^c$.\vskip.1in
\end{enumerate}

Axiom \bf A.1 \rm implies that 
the map
$\iota:\mathcal{R}\ra \prod_{n<\om}\mathcal{AR}_n$ defined by
\begin{equation}
\iota(X)=\lgl r_n(X):n<\om\rgl,
\end{equation}
for $X\in\mathcal{R}$, 
is an injection.
Note that 
$\prod_{n<\om}\mathcal{AR}_n$ is a subspace of
$\mathcal{AR}^{\mathbb{N}}$, where the set $\mathcal{AR}$ is given the discrete topology; this slightly streamlined notation us commonly used in topological Ramsey space theory. 
Recalling the remark after axiom \bf A.2\rm, 
we may assume without loss of generality that $\mathcal{AR}$ is countable, especially since in practice, we will always be working below some member of $\mathcal{R}$.

The Ellentuck space is a good reference point for digesting this notation. 
In the Ellentuck space, $\mathcal{AR}_n$ is the set of increasing sequences of length $n$ where the entries are natural numbers.
Then  $\iota[\mathcal{R}]$ is the  subset of 
$\prod_{n<\om}\mathcal{AR}_n$ 
consisting of all  infinite sequences of finite sequences which cohere:
For $X=\{x_0,x_1,x_2,\dots\} \in[\om]^{\om}$ enumerated in increasing order, 
\begin{equation}
\iota(X)=\lgl r_n(X):n<\om\rgl=\lgl \emptyset, \{x_0\},\{x_0,x_1\}, \{x_0,x_1,x_2\},\dots\rgl.
\end{equation}
Observe that the sequence on the right recovers $X$ in its limit.

The {\em metric topology}  on $\mathcal{R}$  is the topology generated by basic open sets of the form
$\{X\in\mathcal{R}:r_n(X)=u\}$,
where $n<\om$ and $u\in\mathcal{AR}_n$.
This corresponds to the 
topology on 
$\iota[\mathcal{R}]$  inherited as a subspace of $\mathcal{AR}^{\mathbb{N}}$,
where the countable set $\mathcal{AR}$ has the discrete topology and 
$\mathcal{AR}^{\mathbb{N}}$ has the product (that is, Tychonoff) topology.  
When we speak about $\mathcal{R}$ being {\em closed} in 
$\mathcal{AR}^{\mathbb{N}}$, we mean that the $\iota$-image of $\mathcal{R}$ is a closed subspace of 
$\mathcal{AR}^{\mathbb{N}}$.

The  {\em Ellentuck topology} on $\mathcal{R}$ is the topology generated by the basic open sets
$[u,X]$;
it refines the  metric topology on $\mathcal{R}$.
Given the Ellentuck topology on $\mathcal{R}$,
the notions of nowhere dense, and hence of meager are defined in the natural way.
We say that a subset $\mathcal{X}$ of $\mathcal{R}$ has the {\em property of Baire} iff
 there is an open Ellentuck open set $\mathcal{O}$ such that
 the symmetric difference of $\mathcal{X}$ and $\mathcal{O}$ is meager.

\begin{definition}[\cite{TodorcevicBK10}]\label{defn.5.2}
A subset $\mathcal{X}$ of $\mathcal{R}$ is  {\em Ramsey} if for every $\emptyset\ne[u,X]$,
there is a $Y\in[u,X]$ such that $[u,Y]\sse\mathcal{X}$ or $[u,Y]\cap\mathcal{X}=\emptyset$.
$\mathcal{X}\sse\mathcal{R}$ is {\em Ramsey null} if for every $\emptyset\ne [u,X]$, there is a $Y\in[u,X]$ such that $[u,Y]\cap\mathcal{X}=\emptyset$.

A triple $(\mathcal{R},\le,r)$ is a {\em topological Ramsey space} if every subset of $\mathcal{R}$  with the property of Baire  is Ramsey and if every meager subset of $\mathcal{R}$ is Ramsey null.
\end{definition}

The following result is Theorem
5.4 in \cite{TodorcevicBK10}.

\begin{theorem}[Abstract Ellentuck Theorem]\label{thm.AET}\rm \it
If $(\mathcal{R},\le,r)$ is closed (as a subspace of $\mathcal{AR}^{\bN}$) and satisfies axioms {\bf A.1}, {\bf A.2}, {\bf A.3}, and {\bf A.4},
then every  subset of $\mathcal{R}$ with the property of Baire is Ramsey,
and every meager subset is Ramsey null;
in other words,
the triple $(\mathcal{R},\le,r)$ forms a topological Ramsey space.
\end{theorem}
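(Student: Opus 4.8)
The plan is to prove this by the method of \emph{combinatorial forcing} (the Galvin--Prikry--Silver--Ellentuck technique in its abstract form), reducing the statement to two separate claims: that every Ellentuck-open subset of $\mathcal{R}$ is Ramsey, and that every Ellentuck-nowhere-dense subset is Ramsey null. Granting these, the general case follows from the definition of the property of Baire. If $\mathcal{X}$ differs from an Ellentuck-open set $\mathcal{O}$ by an Ellentuck-meager set $\mathcal{M}$, then given a nonempty $[u,X]$ I first apply the open case to obtain $Y\in[u,X]$ with $[u,Y]\subseteq\mathcal{O}$ or $[u,Y]\cap\mathcal{O}=\emptyset$, and then apply Ramsey-nullity of $\mathcal{M}$ to shrink $Y$ to some $Z\in[u,Y]$ with $[u,Z]\cap\mathcal{M}=\emptyset$; on $[u,Z]$ the sets $\mathcal{X}$ and $\mathcal{O}$ agree, so $[u,Z]$ is contained in or disjoint from $\mathcal{X}$, giving the Ramsey property. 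Closure of the family of Ramsey null sets under countable unions (needed to pass from nowhere dense to meager) is itself proved by a fusion.

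The heart of the matter is the open case, which I would handle through an accept/reject analysis of a fixed target family $\mathcal{X}$. For $u\in\mathcal{AR}$ and $X\in\mathcal{R}$ with $\depth_X(u)<\infty$, say that $X$ \emph{accepts} $u$ if $[u,X]\subseteq\mathcal{X}$, that $X$ \emph{rejects} $u$ if no $Y\in[\depth_X(u),X]$ accepts $u$, and that $X$ \emph{decides} $u$ if it accepts or rejects. I would then establish, in order: (i) for every such $u,X$ there is $Y\in[\depth_X(u),X]$ deciding $u$, together with monotonicity of decisions down to all $Y'\in[\depth_X(u),Y]$ --- here Axiom \textbf{A.3} supplies the amalgamation that lets one pass from a witness $[u,Y]\subseteq\mathcal{X}$ to one anchored at the correct depth inside $X$; (ii) by a fusion over the countable set $\mathcal{AR}\re X$ (countable by \textbf{A.2}), a single $Y\le X$ deciding every $u\in\mathcal{AR}\re Y$; and (iii) the crucial rejection lemma: if $X$ rejects $u$ then there is $Y\in[\depth_X(u),X]$ rejecting every immediate extension $v\in r_{|u|+1}[u,Y]$. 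Step (iii) is exactly where Axiom \textbf{A.4} enters: coloring $\mathcal{AR}_{|u|+1}$ according to whether an extension is accepted or rejected and taking the homogeneous $Y$ produced by \textbf{A.4}, one argues the homogeneous color cannot be ``accept,'' since otherwise $X$ would not have rejected $u$.

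From (i)--(iii) I would derive, for Ellentuck-open $\mathcal{X}$, that acceptance and rejection are strongly decisive: iterating (iii) along a fusion produces $Y$ for which rejecting $u$ forces $[u,Y]\cap\mathcal{X}=\emptyset$ (if some $W\in[u,Y]\cap\mathcal{X}$, openness gives a basic $[r_{|u|}(W),W]\subseteq\mathcal{X}$, so $W$ accepts $r_{|u|}(W)$, contradicting rejection), while accepting $u$ gives $[u,Y]\subseteq\mathcal{X}$ by definition; applied to a decided $u$ this is the Ramsey property on $[u,X]$. The nowhere-dense case is dual and easier. Throughout, the hypothesis that $\mathcal{R}$ is \emph{closed} in $\mathcal{AR}^{\mathbb{N}}$ is what guarantees that each fusion sequence $X_0\ge X_1\ge\cdots$ with stabilizing restrictions has its coordinatewise limit back in $\mathcal{R}$, so that the diagonal member actually exists and inherits the decisions made along the way; Axiom \textbf{A.1} makes the limit well defined as an element of $\mathcal{R}$ via the injection $\iota$.

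I expect the main obstacle to be the bookkeeping in (ii)--(iii) together with the fusion limits: one must enumerate $\mathcal{AR}\re X$ respecting depth so that deciding (and rejecting-through-extensions) at each stage shrinks only finitely much of the already-fixed stem, and then verify, using closedness and \textbf{A.1}--\textbf{A.2}, that the limit $Y$ lies in $\mathcal{R}$ and preserves every decision and rejection. The conceptual crux, however, is the rejection lemma (iii) and its interaction with \textbf{A.3}(2): ensuring that rejection is genuinely preserved when one passes to a common refinement anchored at the correct depth is the delicate point on which the entire induction rests.
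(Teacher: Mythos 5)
The paper does not actually prove this theorem: it is quoted verbatim as Theorem 5.4 of Todorcevic's book \cite{TodorcevicBK10}, so there is no in-paper proof to compare against. What you have written is a correct outline of the standard proof from that source (and from Ellentuck's original argument in the concrete case): the reduction of the Baire-property case to the Ellentuck-open and nowhere-dense cases plus countable unions of Ramsey null sets, the accept/reject/decide combinatorial forcing, fusion over the countable set $\mathcal{AR}\re X$ (countable by \textbf{A.2}), the rejection lemma via \textbf{A.4}, amalgamation via \textbf{A.3}, and closedness of $\mathcal{R}$ in $\mathcal{AR}^{\mathbb{N}}$ to guarantee fusion limits exist --- each axiom is deployed exactly where it is needed. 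One small imprecision worth fixing: in your derivation that rejection of $u$ forces $[u,Y]\cap\mathcal{X}=\emptyset$, openness of $\mathcal{X}$ at a point $W\in[u,Y]\cap\mathcal{X}$ gives a basic neighborhood $[r_k(W),W]\subseteq\mathcal{X}$ for \emph{some} $k\ge|u|$, not necessarily $k=|u|$ (note $r_{|u|}(W)=u$, so your formula as written asserts $[u,W]\subseteq\mathcal{X}$, which is stronger than openness provides). The argument survives unchanged because your fusion arranges that $Y$ rejects \emph{every} $v\in\mathcal{AR}\re Y$ extending $u$, so $W$ accepting $r_k(W)$ contradicts rejection of $r_k(W)$ inherited down to $W$.
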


Rather than repeating the hypotheses of the Abstract Ellentuck Theorem many times throughout this paper, we will simply make the following definition.

\begin{definition}\label{def.axiomatizedtRs}
We say that a topological Ramsey spaces is {\em axiomatized} if it is closed as a subspace of $\mathcal{AR}^{\bN}$ and satisfies axioms {\bf A.1}, {\bf A.2}, {\bf A.3}, and {\bf A.4}.
\end{definition}

\begin{example}\label{Ellspace}
The {\em Ellentuck space} is the triple $([\om]^{\om},\sse,r)$,
where for each $X\in[\om]^{\om}$ and $n<\om$, 
$r_n(X)$ denotes the set of the $n$ least members of $X$.
Here,  $\le_{\mathrm{fin}}$  is simply the subset relation. 
\end{example}

The Ellentuck space is the prototypical topological Ramsey space. 
All known examples of topological Ramsey spaces contain a copy of this space.
Notice that $\sse^*$ is a $\sigma$-closed quasi-order coarsening the partial order $\sse$ on the Ellentuck space.
This forcing $([\om]^{\om},\sse^*)$ produces  a Ramsey ultrafilter on base set $\om$, which is in one-to-one correspondence with $[\om]^1$, also known as the set of first approximations of members of the Ellentuck space.


\subsection{Forcing with   topological Ramsey spaces, and the  ultrafilters they generate}\label{subsec.2.2}

Given a topological Ramsey space $(\mathcal{R},\le,r)$,
there are three  related forcings.
The first is the Mathias-like forcing, where conditions are of the form $[s,A]$ where $s\in\mathcal{AR}$, $A\in\mathcal{R}$ and $s\sqsubset A$.  The second  is $\lgl \mathcal{R},\le\rgl$.
The third is $\lgl \mathcal{R},\le^*\rgl$ where $\le^*$ is some   
 $\sigma$-closed partial order which  coarsens $\le$
 such that 
 the separative quotients of $\lgl \mathcal{R},\le\rgl$ and $\lgl \mathcal{R},\le^*\rgl$ are isomorphic.
These forcings were shown to have many properties in common with Mathias forcing in \cite{Mijares07} and \cite{DiPrisco/Mijares/Nieto17}.
Similarly to $\lgl [\om]^{\om},\sse^*\rgl$, 
forcing with $\lgl \mathcal{R},\le^*\rgl$  adds a new ultrafilter on a countable base set as follows:
By Axiom \bf A.2\rm, relativizing below some member of $\mathcal{R}$ if necessary, the collection  $\mathcal{AR}_1$ of all first approximations to members of $\mathcal{R}$ is a countable set.
We shall let 
\begin{equation}\label{eq.U_G}
\mathcal{U}_{\mathcal{R}}=\{Y\sse\mathcal{AR}_1:\exists X\in G\, (\mathcal{AR}_1\re X\sse Y)\},
\end{equation}
where $G$ is some  generic filter forced by $\lgl\mathcal{R},\le^*\rgl$.
By genericity and the Abstract Ellentuck Theorem \ref{thm.AET}, one sees that $\mathcal{U}_{\mathcal{R}}$ is an ultrafilter on base set $\mathcal{AR}_1$.
For all known topological Ramsey spaces, the collection  $\mathcal{AR}_1$ of first approximations  is a countable set.
If  not, then by Axiom \bf A.2\rm, restricting below some member $Z\in \mathcal{R}$ provides a countable set $\mathcal{AR}_1\re Z$.
If $\lgl\mathcal{R},\le^*\rgl$ is isomorphic to a dense subset of  some $\sigma$-closed  forcing $\mathbb{P}$ which forces a new ultrafilter,  then the ultrafilter forced by $\mathbb{P}$ is isomorphic to $\mathcal{U}_{\mathcal{R}}$. 
 In this way, when we have a forcing $\mathbb{P}$ with a Ramsey space
 as a dense subset, then results for forcing with $\mathbb{P}$ reduce to results
 for ultrafilters forced by Ramsey spaces.

 Ultrafilters $\mathcal{U}_{\mathcal{R}}$ forced by topological Ramsey spaces satisfy  interesting partitition relations.

 \begin{definition}\label{defn.Rdeg}
Given an ultrafilter $\mathcal{U}$ on a countable base set $S$, for each $n\ge 2$,  we define  
$t(\mathcal{U},n)$ to be the least number $t$, if it exists, such that  for any $\ell\ge 2$ and any  coloring $c:[S]^n\ra \ell$, there is a member $U\in\mathcal{U}$ such that $c\re [U]^n$ takes no more than $t$ colors.
When this $t(\mathcal{U},n)$ exists, the standard notation is to write
$$
\mathcal{U}\ra (\mathcal{U})^n_{\ell,t(\mathcal{U},n)},
$$
and  call $t(\mathcal{U},n)$ the {\em Ramsey degree} of $\mathcal{U}$ for $n$.
\end{definition}

Recall  that $\mathcal{U}$ is a Ramsey ultrafilter if and only if 
$t(\mathcal{U},2)=1$ if and only if $t(\mathcal{U},n)=1$ for all $n\ge 2$.
The ultrafilters for which our results hold all have $t(\mathcal{U},2)\ge 2$.

The role  of topological Ramsey spaces in this paper is several-fold.
First, we will need forcings which satisfy analogues of $\om\ra(\om)^{\om}$, and topological Ramsey spaces are natural candidates because of the Abstract Ellentuck Theorem \ref{thm.AET}.
Second, many topological Ramsey spaces  behave  similarly to the Baire space in contexts of AD$_{\mathbb{R}}$, or AD$^+$, or the Solovay model.
Third, in previous papers, many  known forcings producing ultrafilters with interesting Ramsey degrees 
 were shown to contain dense sets forming topological Ramsey spaces (see 
 \cite{DobrinenCreaturetRs16}, 
 \cite{DobrinenJSL16}, 
 \cite{DobrinenJML16},
  \cite{Dobrinen/Mijares/Trujillo17}, \cite{Dobrinen/Todorcevic14},  
 \cite{Dobrinen/Todorcevic15}).
 The original motivation for these constructions was to find  exact  Rudin-Keisler and Tukey structures below these forced ultrafilters, as 
 Ramsey space structure and  Theorem \ref{thm.AET} 
make such results possible.
See \cite{DobrinenSEALS19} for an overview of those results. 
These topological Ramsey spaces  provided a variety of ultrafilters which 
were likely to  have barren extensions.
The topological Ramsey space approach makes the results in this paper possible, while streamlining proofs and making the results applicable to a variety of known forcings.


\section{No new sets of ordinals}\label{sec.barren}

We begin this section by defining  the notion of  extended  coarsened posets.
These are used in the definition of  the Left-Right Axiom,
which 
abstracts the key property of the forcing 
$([\om]^{\om},\sse^*)$  which 
Henle, Mathias, and Woodin 
used in the proof of Theorem \ref{thm.1.0}.
This  axiom along with the assumption  of infinite dimensional partition relations will allow us to prove the more general Theorem \ref{thm.barren}.
 After proving in 
 Lemma \ref{lem.ADimpliesRamsey}
 that, assuming some determinacy, all subsets of a topological Ramsey space are Ramsey,
 we conclude this section with 
 Theorem \ref{thm.barrenufs}.
It follows that a large collection of ultrafilters produce barren extensions, as will be  discussed in Sections \ref{sec.5} and \ref{sec.6}.

In what follows, given  a quasi-order $\le^*$,
 we write $x =^* y$ iff $x \le^* y$ and $y \le^* x$.
 
\begin{definition}\label{poset_def}
A {\em coarsened poset} $\mbb{P}$ is a triple
 $\mbb{P} = \langle X, \le, \le^* \rangle$
  where $\le$ is a partial order and $\le^*$  is a quasi-order  on $X$ with the following  properties:
\begin{enumerate}
\item[1)]
For all $x,y \in X$, $x \le y$ implies 
$x \le^* y$;
\item[2)]
 For all $x,y \in X$,
 if $ y \le^* x$ then there is 
some $ z \le x$ such that $z =^* y$.
 \end{enumerate}
Given $x \in X$, define the following notation:
 $$[x] := \{ y \in X : y \le x \}$$
 $$[x]^* := \{ y \in X : y \le^* x \}.$$
 \end{definition}

\begin{observation}\label{obs.fe}
If  $\mbb{P} = \langle X, \le, \le^* \rangle$ is a coarsened poset, then 
the separative quotients of
 $\lgl X,\le\rgl$ and $\lgl X,\le^*\rgl$ are  isomorphic, so we say that  $\lgl X,\le\rgl$ and $\lgl X,\le^*\rgl$  are {\em forcing equivalent}.
\end{observation}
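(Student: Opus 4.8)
The plan is to prove that the identity map on $X$ descends to an isomorphism between the separative quotient of $\lgl X,\le\rgl$ and that of $\lgl X,\le^*\rgl$. Recall that for a forcing notion two conditions are \emph{compatible} when they have a common extension, and that the separative quotient is formed from the \emph{separative pre-order} $x\preceq y$ (holding iff every extension of $x$ is compatible with $y$) by quotienting by its associated equivalence. Write $\preceq_{\le}$ and $\preceq_{\le^*}$ for the separative pre-orders coming from $\le$ and $\le^*$. By property~(1) of Definition~\ref{poset_def} the order $\le$ refines $\le^*$, so the identity is order preserving and has full (hence dense) range in $\lgl X,\le^*\rgl$; thus the only thing to check is that it preserves and reflects compatibility. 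I would therefore reduce the statement to the single claim that the two compatibility relations agree: for all $x,y\in X$, there is $z$ with $z\le x,\ z\le y$ if and only if there is $z$ with $z\le^* x,\ z\le^* y$. Granting this, a short argument using (1) and (2) shows $\preceq_{\le}$ and $\preceq_{\le^*}$ coincide on $X$; the nontrivial inclusion is that $x\preceq_{\le}y$ implies $x\preceq_{\le^*}y$, obtained by taking a $\le^*$-extension $z$ of $x$, using (2) to pull it back to some $w\le x$ with $w=^* z$, observing $w$ is compatible with $y$, and transporting compatibility back along $w=^* z$. Hence the two quotients are literally the same poset.

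It remains to establish the compatibility claim. One direction is immediate: if $z\le x$ and $z\le y$, then by (1) also $z\le^* x$ and $z\le^* y$, so $x$ and $y$ are $\le^*$-compatible. For the converse, suppose $z\le^* x$ and $z\le^* y$. I would apply (2) to $z\le^* x$ to get $w\le x$ with $w=^* z$; since $w=^* z\le^* y$ we have $w\le^* y$, so a second application of (2), now to $w\le^* y$, yields $v\le y$ with $v=^* w$. At this point $w\le x$ and $v\le y$ are honest $\le$-conditions lying in the common $=^*$-class of $z$, and any common $\le$-lower bound $u$ of $w$ and $v$ satisfies $u\le w\le x$ and $u\le v\le y$, giving the desired $\le$-compatibility of $x$ and $y$.

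The step I expect to be the crux is exactly this last one: producing a common $\le$-lower bound of the two $=^*$-equivalent conditions $w$ and $v$. Property~(2) only guarantees that each $=^*$-class meets the $\le$-cone below a given condition, not that a single $=^*$-class is downward $\le$-directed, so the converse compatibility genuinely needs the coherence fact that \emph{$=^*$-equivalent elements are $\le$-compatible}. This is precisely what makes the Ellentuck space work, where $a=^* b$ means $a$ and $b$ have finite symmetric difference and $a\cap b$ is a common $\subseteq$-refinement. I would therefore verify that the coarsenings under consideration share this feature---or record it as a standing property of the coarsened posets in play---and use it to close the converse. Once compatibility is shown to coincide, the reduction of the first paragraph delivers the equality of separative pre-orders, hence the isomorphism of separative quotients, and hence the asserted forcing equivalence.
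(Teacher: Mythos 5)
Your proposal follows the paper's own route almost step for step: both arguments reduce the isomorphism of the separative quotients to the claim that $\le$-compatibility and $\le^*$-compatibility coincide, both get the forward direction immediately from property 1) of Definition \ref{poset_def}, and both attack the converse by applying property 2) twice --- the paper produces $z' \le x$ with $z' =^* z$ and then $z'' \le y$ with $z'' =^* z'$, which are exactly your $w$ and $v$. The one divergence is at the very end: the paper concludes ``in particular, $z''$ witnesses that $x \| y$,'' even though $z''$ is only known to satisfy $z'' \le y$ and to be $=^*$-equivalent to a condition $z' \le x$. That unjustified assertion is precisely the step you isolate as the crux and decline to claim from the axioms.

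Your caution is warranted: the Observation as literally stated does not follow from properties 1) and 2) alone, so the coherence property you name ($=^*$-equivalent conditions are $\le$-compatible) is a genuine additional hypothesis, not a routine verification. A minimal counterexample: let $X = \{a, a', b, b'\}$, let $\le$ be the partial order generated by $a' \le a$ and $b' \le b$, and let $\le^*$ be the quasi-order generated by $\le$ together with $a' =^* b'$. Property 1) is clear, and every instance of property 2) is witnessed by $a'$ or $b'$ (for example, $a' \le^* b$ is witnessed by $z = b'$, since $b' \le b$ and $b' =^* a'$). Yet $a$ and $b$ are $\le^*$-compatible (via $a'$) while $\le$-incompatible, and the separative quotient of $\lgl X, \le \rgl$ is a two-element antichain whereas that of $\lgl X, \le^* \rgl$ is a single point. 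So the paper's proof has a real gap at exactly the spot you flagged, and your proposed repair --- requiring each $=^*$-class to consist of pairwise $\le$-compatible conditions, or verifying this for each coarsened poset in play --- is the right one. The extra property does hold in all of the paper's applications: in the Ellentuck space $a =^* b$ yields the common refinement $a \cap b$, and the mod-finite-tail coarsenings of Sections \ref{sec.5} and \ref{sec.6} behave the same way, so the downstream results are unaffected; but as a statement about abstract coarsened posets, the Observation needs your hypothesis.
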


\begin{proof}
For $x,y\in X$, write $x\|y$ if $x$ and $y$ are compatible in  $\lgl X,\le\rgl$, and write $x\|^*y$ if $x$ and $y$ are compatible in  $\lgl X,\le^*\rgl$.
If $x\|y$, then there is some $z\in X$ such that $z\le x$ and $z\le y$.  Then $z\le^*x$ and $z\le^*y$, so $x\|^*y$.
On the other hand if $x\|^*y$, then there is some $z\in X$ such that $z\le^*x$ and $z\le^*y$.
Using the fact that $z\le^*x$ and letting $z$ play the role of $y$ in 
 2) in Definition \ref{poset_def}, we see that 
there is some $z'\le x$ such that $z'=^* z$.
By 1) and transitivity of $\le^*$, $z'\le^* y$, so by 2) there is some $z''\le y$ such that $z''=^* z'$.
In particular, $z''$ witnesses that $x\|y$.
\end{proof}

A typical  example of a  coarsened poset  has the form 
 $\langle \mc{H}, \subseteq, \subseteq^\mc{I} \rangle$
 where $\mc{I}$ is an ideal on $\omega$ and 
 $\mc{H}$ is  the coideal $\{X\sse\om:X\not\in\mathcal{I}\}$.
Another typical example is  a  topological Ramsey space
 $\lgl \mathcal{R}, \le, \le^*\rgl$, where 
 $\le$ is the partial-order on $\mathcal{R}$ and $\le^*$ is a 
 $\sigma$-closed  quasi-order  coarsening $\le$, where   $\lgl \mathcal{R},\le\rgl$ and $\lgl\mathcal{R},\le^*\rgl$ have the same separative quotient.

\begin{definition}\label{defn.sscp}
Given a  coarsened poset $\mbb{P} = \langle X, \le, \le^* \rangle$ and a quasi-ordered set $\mbb{P}^*=\lgl X^*,\prec\rgl $,
we say that $\mbb{P}^*$  is an {\em equivalent extension} of $\lgl X,\le^*\rgl$ iff the following hold:
\begin{enumerate}
\item[1)]
$X\sse X^*$ and $\prec\re(X\times X)$  equals $\le^*$; and 
\item[2)]
$\lgl X,\le^*\rgl$ is a dense subset of $\lgl X^*,\prec\rgl$.
\end{enumerate}
In this case, we write $\mbb{P}^*=\lgl X,X^*,\le,\le^*\rgl$, and say that $\mbb{P}^*$ is an {\em extended  coarsened poset}, or {\em EC poset}
 (we write $\le^*$ for $\prec$).
Given an EC poset, for $x\in X^*$ define the notation:
$$ 
X^*[x]^*:=\{y\in X^*:y\le^* x\}.
$$
\end{definition}

In some cases $X^*$ will simply be $X$, but for many of our applications in Sections \ref{sec.5} and \ref{sec.6}, we shall need the flexibility of EC posets. 
Notice that 2)  in Definition \ref{defn.sscp} and Observation \ref{obs.fe}
 imply that  
 $\lgl X,\le\rgl$, 
$\lgl X,\le^*\rgl$, and  $\lgl X^*,\le^*\rgl$ are forcing equivalent.

\begin{definition}\label{LRA}
Let $\mbb{P}^* = \langle X, X^*,\le, \le^* \rangle$
 be an extended  coarsened poset.
We say that $\mbb{P}^*$
 \textit{satisfies the Left-Right Axiom (LRA)} 
 iff there are functions
 $\mbox{Left} : X \to X^*$ and
 $\mbox{Right} : X \to X^*$ such that
 the following are satisfied:
\begin{itemize}
\item[1)] 
For each $x \in X$, we have
 $\mbox{Left}(x),
 \mbox{Right}(x) \le^* x$.
 
\item[2)]
 For each $x \in X$,
 there are $y,z \in[x]$
 such that
\begin{itemize}
\item[2a)]
 $\mbox{Left}(y) =^* \mbox{Right}(z)$;
\item[2b)]
 $\mbox{Right}(y) =^* \mbox{Left}(z)$;
\end{itemize}
\item[3)]
Given  $p \in X$,
 for each $x,y \in[p]$,
 there is some $z \in[p]$ such that
\begin{itemize}
\item[3a)] $\mbox{Left}(z) \le^* x$;
\item[3b)] $\mbox{Left}(\mbox{Right}(z))
\le^* x$;
\item[3c)] $\mbox{Right}(\mbox{Right}(z))
\le^* y$;
\end{itemize}
\end{itemize}
\end{definition}

 
 We say that {\em all  cubes  of a poset $\lgl X,\le\rgl$ are Ramsey} if the following holds:
 Given 
 $x \in X$, a positive integer $k$,  and  a coloring
 $c: [x]\to k$,
 there is some $y\le x$ 
 such that $c \restriction [y]$ is constant.

\begin{thm1.3}
Let
$M$ be a transitive  model of ZF.
In $M$, let 
 $\mbb{P} = \langle X, X^*,\le, \le^* \rangle$
 be an extended  coarsened poset  satisfying the
 Left-Right Axiom, and assume that all cubes of $\lgl X,\le\rgl$ are Ramsey.
 Let $N$ be a 
generic extension of $M$ by the forcing
 $\langle X, \le^* \rangle$.
 Then $M$ and $N$ have the same sets of ordinals;
 moreover, every sequence in $N$ of elements of $M$ lies in $M$.
\end{thm1.3}

\begin{proof}
Recall that $\mbb{P}^*$ being an EC poset  means 
that $\lgl X, \le\rgl$, $\lgl X,\le^*\rgl$, and $\lgl X^*,\le^*\rgl$ have isomorphic  separative quotients.
Formally, we shall force with 
 $\langle X^*, \le^* \rangle$, and the forcing relation
 $\forces$ refers to this quasi-order.
 
It suffices to show that given any fixed  $p_0\in X^*$, $\dot{f}$, and ordinal
 $\lambda$ satisfying
 $p_0 \forces \dot{f} :
 \check{\lambda} \to \check{M}$,
there is some $q\in X^*[p_0]^*$
 satisfying
 $q \forces \dot{f} \in \check{M}$.
 We will in fact find such a $q$ in $X$.
Assume towards a contradiction that  for some such
$p_0\in X^*$
 with 
 $p_0 \forces \dot{f} :
 \check{\lambda} \to \check{M}$,
 there is no 
 $q\in X^*[p_0]^*$
such that 
 $q \forces \dot{f} \in \check{M}$.
Then for each
 $p \in X^*[p_0]^*$,
 there  is  a least ordinal
 $\varphi(p) < \lambda$
 such that $p$ does not decide $\dot{f}(\check{\varphi}(p))$; that is, 
 $(\forall u \in M)\,
 p \not\forces \dot{f}(\check{\varphi}(p)) =
 \check{u}$.
Notice that $\varphi$ is invariant,
meaning that whenever $x,y\in X^*$ satisfy  $x=^* y$, then  $\varphi(x)=\varphi(y)$.
 Since $\lgl X,\le^*\rgl$ is dense in $\lgl X^*,\le^*\rgl$,
take some 
 $p_1\in X$ such that $p_1\le^* p_0$.
Define the coloring 
 $c :[p_1] \to 3$  as follows: 
  For $p\in[p_1]$, let 
$$c(p) =
\begin{cases}
 0 & \mbox{if }
   \varphi(\mbox{Left}(p)) <
   \varphi(\mbox{Right}(p)), \\
 1 & \mbox{if }
   \varphi(\mbox{Left}(p)) =
   \varphi(\mbox{Right}(p)), \\
 2 & \mbox{if }
   \varphi(\mbox{Left}(p)) >
   \varphi(\mbox{Right}(p)).
\end{cases}$$
By the hypotheses, there is some 
 $p_2\in[p_1]$ such that  $[p_2]$ is 
homogeneous for $c$;
that is,
 $c \restriction [p_2]$
 is constant.
We claim that $c(p_2) = 1$.

Suppose towards a contradiction
 that $c(p_2) = 0$.
Take  $y, z\in [p_2]$
 satisfying  2a) and 2b) of the Left-Right Axiom.
Since $c(y) =c(p_2)= 0$,
 $$
 \varphi(\mbox{Left}(y)) <
   \varphi(\mbox{Right}(y)).
  $$
Since $\varphi$ is invariant under $=^*$, by
  2a) and 2b) of the LRA, we have
 $$
 \varphi(\mbox{Right}(z))
 =\varphi(\mbox{Left}(y))
 \mathrm{\ \ and\ \ }
  \varphi(\mbox{Right}(y))
   =\varphi(\mbox{Left}(z)).
   $$
Thus,
$$
 \varphi(\mbox{Right}(z))<
   \varphi(\mbox{Left}(z)),
   $$
 so $c(z) = 2$, a 
 contradiction to  
 $c \restriction [p_2]$ being  constant.
 A similar argument shows that $c(p_2)\ne 2$.

Since $p_2$ does not decide the value of
 $\dot{f}(\varphi(\check{p}_2))$,
 there are 
 $x^*,y^*\in X^*[p_2]^*$
 and $u\ne v$ in $M$  such that 
 $x^*
 \forces \dot{f}( \varphi(\check{p}_2) )
 = \check{u}$ and
 $y^*
 \forces \dot{f}( \varphi(\check{p}_2) )
 = \check{v}$.
 Since $\lgl X,\le^*\rgl$ is dense in $\lgl X^*,\le^*\rgl$,
 there are $x',y'\in X$ with $x'\le^* x^*$ and $y'\le^* y^*$; in particular, 
$x',y'\le^* p_2$.
 By 2) of the definition of coarsened poset, there are 
 $x,y\in X$ such that $x\le p_2$ and $x=^* x'$,
 and $y\le p_2$ and $y=^* y'$.
 Thus, we have $x,y\in [p_2]$ such that 
 $x
 \forces \dot{f}( \varphi(\check{p}_2) )
 = \check{u}$ and
 $y
 \forces \dot{f}( \varphi(\check{p}_2) )
 = \check{v}$.
 Fix some  $z \in[p_2]$ satisfying 
 3) of  the LRA
  with regard to $x$ and $y$.
By 3a) we have that
 $\mbox{Left}(z) \le^* x$,
 which implies that 
 $$
 \varphi(\mbox{Left}(z)) \ge
 \varphi(x).
 $$
At the same time,
 $\varphi(x) > \varphi(p_2)$, so
 $$
 \varphi(\mbox{Left}(z)) >
 \varphi(p_2).
 $$
Further,  $\mbox{Right}(z)\le^* z\le p_2$ implies that
 $\varphi(\mbox{Right}(z)) \ge \varphi(p_2)$.
At the same time, 3b) and 3c) of LRA imply that 
 $\mbox{Right}(z)$ is
 $\le^*$-compatible with both $x$ and $y$,
 so 
 $\mbox{Right}(z)$
 does not determine the value of
 $\dot{f}(\varphi(p_2))$;
hence,
 $\varphi(\mbox{Right}(z)) \le \varphi(p_2)$.
 Thus, 
 $$
 \varphi(\mbox{Right}(z))=\varphi(p_2).
 $$
 It follows that 
 $$
 \varphi(\mbox{Left(z)}) > \varphi(p_2)
 = \varphi(\mbox{Right}(z)),
 $$
 implying that  $c(z) = 2$,  contradicting that   $c$ has constant value $1$ on $[p_2]$.
\end{proof}

At this point, let us explain one of the main
 ways to show that every subset of a topological Ramsey space is Ramsey.
 Recall that a subset of a Polish space is Polish if and only if it is $G_{\delta}$. 
We will need the following:

\begin{definition}\label{new_def_projectively_presented}
Let 
$X$ be a Polish space  and $\mathbb{P}=(X,\le)$ be a poset with the property that the subspace $\{(x,y)\in X\times X:x\le y\}$  of $X\times X$ is also Polish.
We say that this space is \defemph{projectively presented} iff
 there is an injection $\eta : X \to \baire$ such that
 the following hold:
\begin{itemize}
\item[1)] 
$\im(\eta)$ is projective.
\item[2)]  
$\{(u,v)\in \im(\eta)\times \im(\eta):\eta^{-1}(u)\le \eta^{-1}(v)\}$
 is projective.
\item[3)] 
Given $p \in  X$ and a function
 $f : [p] \to \baire$
 that is continuous  with respect to the metric topology on $X$, then
 the relation $S \subseteq \baire \times \baire$
 is $\mathbf{\Sigma}^1_1$,
 where 
$$
 S=\{(u,f(\eta^{-1}(u))):u\in \im(\eta)\mathrm{\ and \ }\eta^{-1}(u)\le p\}.
$$
\item[4)]
 For every $p \in X$,
\begin{itemize}
\item[4a)]
for  every
 continuous $f : [p] \to {^\omega 2}$,
 the function $f \circ \eta^{-1}
 : \im(\eta) \to {^\omega 2}$ is $\Sigma^1_1$;
\item[4b)]
 the following set is projective
 (uniformly in $\eta(p)$):
 the set of codes $c \in \baire$
 for $\bf{\Sigma}^1_1$ functions
 $g: \baire \to {^\omega 2}$
 such that
 $g \circ \eta : [p]
 \to {^\omega 2}$ is continuous.
\end{itemize}
\end{itemize}
\end{definition}

Recalling Definition \ref{def.axiomatizedtRs},
we say that a triple
 $\lgl\mathcal{R},\le,r\rgl$ 
is  an  {\em axiomatized topological Ramsey space}  if it
is closed as a subspace of $\mathcal{AR}^{\mathbb{N}}$ and 
 satisfies axioms \bf A.1\rm --\bf A.4. \rm
In most topological Ramsey spaces, the set $\mathcal{AR}$ of all finite approximations of members of the space is countable.
In the unlikely case that it is not, 
 Axiom \bf A.2 \rm guarantees that relativizing   below any member  $p\in \mathcal{R}$, the set $\mathcal{AR}\re p$ is countable.
 Thus, without loss of generality, we shall assume that $\mathcal{AR}$ is countable. 
Assuming  countable choice for sets of reals,
each  axiomatized topological Ramsey space is projectively presented with the following  particularly simple form.

\begin{lemma}\label{lem.tRsprojectivelypres}
Assuming countable choice for sets of reals,
each  axiomatized topological Ramsey
 $\lgl\mathcal{R},\le,r\rgl$  
is projectively presented.
In fact,
there is a   bijection $\eta:\mathcal{R}\ra \baire$  which is  continuous with respect to the metric topology on $\mathcal{R}$ so that 
the set in 2) is analytic, and 
3) and 4) are  true since $\eta$ is  a continuous bijection.
\end{lemma}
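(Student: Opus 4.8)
The plan is to exhibit the map $\eta$ concretely from the tree structure of $\mathcal{AR}$ and then read off each clause of Definition~\ref{new_def_projectively_presented} from the topological regularity that Axioms \textbf{A.1}--\textbf{A.4} force on $\mathcal{R}$. First I would note that, having relativized below a member of $\mathcal{R}$ as permitted by \textbf{A.2}, the set $\mathcal{AR}$ is countably infinite, so $\mathcal{AR}^{\mathbb{N}}$ is a zero-dimensional Polish space homeomorphic to $\baire$. By \textbf{A.1} the map $\iota(X)=\lgl r_n(X):n<\om\rgl$ is injective, and by the definition of the metric topology it is a homeomorphism of $\mathcal{R}$ onto $\iota[\mathcal{R}]$, which is closed in $\mathcal{AR}^{\mathbb{N}}$ because $\lgl\mathcal{R},\le,r\rgl$ is axiomatized. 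Thus $\mathcal{R}$ is itself a zero-dimensional Polish space whose basic clopen sets are the cones $\{X\in\mathcal{R}:r_n(X)=u\}$. Composing $\iota$ with a fixed homeomorphism $\mathcal{AR}^{\mathbb{N}}\cong\baire$ already produces a continuous injection of $\mathcal{R}$ into $\baire$ with closed, hence projective, image, which suffices for the bare definition; to obtain the stronger claim that $\eta$ may be taken to be a bijection onto $\baire$ I would invoke the Alexandrov--Urysohn characterization, observing that each nonempty cone splits into infinitely many nonempty subcones one level up, so that $\mathcal{R}$ is nowhere locally compact and therefore homeomorphic to $\baire$.

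The engine behind clauses 2)--4) is the observation that $\le$ is closed. By \textbf{A.2}(2), $Y\le X$ holds iff $(\forall n)(\exists m)\,r_n(Y)\le_{\mathrm{fin}}r_m(X)$; since $r_n(Y)$ is a clopen function of $Y$ and, by \textbf{A.2}(1), only finitely many $v$ satisfy $v\le_{\mathrm{fin}}r_m(X)$, each inner condition is clopen in $Y$ and open in $X$. Hence $\{(Y,X):Y\le X\}$ is a $G_{\delta}$, and therefore Polish, subspace of $\mathcal{R}\times\mathcal{R}$, and for each fixed $p$ the cube $[p]=\{Y:Y\le p\}$ is closed in $\mathcal{R}$. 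This verifies the standing hypothesis of Definition~\ref{new_def_projectively_presented} and is the source of all the $\mathbf{\Sigma}^1_1$ estimates below; in the present choiceless setting, countable choice for sets of reals is used here (and in the scheme defining $\eta$) to guarantee the expected closure properties of the analytic pointclass.

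With $\eta$ and these facts in hand the clauses follow mechanically. Clause 1) is trivial since $\im(\eta)=\baire$. For clause 2), the set $\{(u,v):\eta^{-1}(u)\le\eta^{-1}(v)\}$ is the image of the closed set $\{(Y,X):Y\le X\}$ under the continuous map $\eta\times\eta$, hence analytic. For clause 3), given continuous $f:[p]\to\baire$, the set $S$ equals $\{(\eta(Y),f(Y)):Y\in[p]\}$, the image of the closed set $[p]$ under the continuous map $Y\mapsto(\eta(Y),f(Y))$, hence $\mathbf{\Sigma}^1_1$; clause 4a) is the identical computation with codomain ${^\om 2}$, using that $f\circ\eta^{-1}$ has graph $\{(\eta(Y),f(Y)):Y\in[p]\}$. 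The remaining clause 4b) is the one point requiring genuine bookkeeping: here I would use that a continuous bijection between Polish spaces has Borel inverse (Lusin--Souslin), that by clause 2) the set $\eta[[p]]=\{w:\eta^{-1}(w)\le p\}$ admits an analytic description uniformly in $\eta(p)$, and that ``$c$ codes a total $\mathbf{\Sigma}^1_1$ function $g$ whose restriction to $\eta[[p]]$ is continuous'' is then expressible by a projective formula in $c$ and $\eta(p)$.

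I expect the main obstacle to be the construction of $\eta$ as a genuine bijection onto $\baire$: proving that $\mathcal{R}$ is nowhere locally compact (the infinite branching of the cones) and carrying out the Alexandrov--Urysohn scheme using only countable choice for sets of reals. If one is content with the bare definition of projective presentation, this obstacle disappears, since the continuous injection with closed image already suffices; the unavoidable technical labor then shifts to clause 4b), namely the uniform projectivity of the set of codes.
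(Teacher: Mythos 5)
Your proposal is correct, and its treatment of clauses 1)--4) coincides with the paper's (continuous images of Polish sets are analytic, $[p]$ is closed, the graph of $f\circ\eta^{-1}$ is the continuous image of the graph of $f$); in fact you supply two details the paper elides, namely the $G_\delta$ computation from \textbf{A.2}(2) showing that $\{(Y,X):Y\le X\}$ is Polish, and the bookkeeping sketch for 4b). The genuine divergence is in how $\eta$ is obtained. The paper constructs it directly from the tree of finite approximations: writing $E(a)$ for the set of one-step extensions of $a\in\mathcal{AR}$, it uses countable choice for sets of reals exactly once, to fix bijections $\eta_a:E(a)\ra\om$, and then sets $\eta(p)=\lgl \eta_{r_n(p)}(r_{n+1}(p)):n<\om\rgl$; surjectivity onto $\baire$ follows from closedness of $\mathcal{R}$ in $\mathcal{AR}^{\mathbb{N}}$, and the inverse is computed level by level, so $\eta$ is even a homeomorphism. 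You instead appeal to the Alexandrov--Urysohn characterization of $\baire$, verified via the infinite splitting of cones. Both routes rest on the same tacit nondegeneracy hypothesis (each $E(a)$ is infinite), but the paper's explicit coding dissolves exactly the obstacle you flag at the end: there is no abstract Lusin scheme to carry out in a choice-poor setting, because the scheme is the tree $\mathcal{AR}$ itself, and indexing the children of each node by $\om$ is precisely the one application of countable choice. So your route is more modular, but once the Alexandrov--Urysohn construction is made concrete in $\mathcal{R}$ it collapses to the paper's map anyway; note also that your reading of 4b) (continuity of $g\re\eta[[p]]$ rather than of $g\circ\eta$ on $[p]$) is legitimate only because $\eta$ is a homeomorphism, which, fortunately, both constructions provide.
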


\begin{proof}
Either $\mathcal{AR}$ is countable, or else  fix any $p\in\mathcal{R}$ and relativize the proof to $\mathcal{R}\re p$.
For $a\in\mathcal{AR}$, let  
\begin{equation}
E(a)=\{b\in\mathcal{AR}_{|a|+1}:r_{|a|}(b)=a\}.
\end{equation}
By countable choice for reals, 
there is a set of bijections  
\begin{equation}
\eta_a:E(a)\ra \om, \ \  a\in \mathcal{AR}.
\end{equation}
By definition, $\mathcal{AR}_0=\{r_0(x):x\in\mathcal{R}\}$, which has $\emptyset$ as its only member.  
Define $\eta:\mathcal{R}\ra\baire$ as follows:
Given $p\in\mathcal{R}$,
define 
\begin{equation}
\eta(p)=\lgl \eta_{r_n(p)}(r_{n+1}(p)):n<\om\rgl.
\end{equation}
This function $\eta$ is continuous with respect to the metric topology on $\mathcal{R}$, and it is a bijection onto $\baire$; thus, 1) trivially holds.
Furthermore,
the set of all pairs $(p,q)\in \mathcal{R}\times\mathcal{R}$ with $p<q$ is a Polish subspace of $\mathcal{R}\times\mathcal{R}$, since $\mathcal{R}$ is a closed subspace of $\mathcal{AR}^{\mathbb{N}}$.
Thus, the continuous image of this set by $\eta$ is analytic, showing that 2) holds.

To show 3), let $p\in\mathcal{R}$ be fixed, and let $f:[p]\ra\baire$ be a continuous  function.  
Then the relation $S$  in 3) is  certainly $\Sigma^1_1$, since $f\circ \eta^{-1}$ is a continuous function on $[p]$, which is a Polish space since it is a closed subset of $\mathcal{R}$.
Condition 4a) is trivial, since $\eta^{-1}$ is  a continuous bijection.
Likewise, 4b) holds.
\end{proof}

\begin{definition}
{\em $\Sigma^2_1$-reflection} is the statement that
 given any $\Sigma^2_1$ formula,
 if
 $\varphi$ is witnessed by some $A \subseteq \mbb{R}$, then
 $\varphi(A)$ is witnessed by some $A \subseteq \mbb{R}$ that is
 Suslin coSuslin.
\end{definition}

$AD_\mathbb{R}$ implies that every set of reals is Suslin coSuslin.
It is also well known that the axiom $\ad^+ + V = L(\mathcal{P}(\mathbb{R}))$
 implies $\Sigma^2_1$-reflection.
See for example 
\cite{Steel/Trang}
 and   Theorem 25 in \cite{Woodin10}.
We will use this shortly.

The following lemma appears in a modified form in \cite{Feng/Magidor/Woodin92}
 in Theorem 2.2,
 where it is shown that assuming $\zfc$,
 every universally Baire set of reals is Ramsey
 using a countable elementary substructure argument.
Instead of a countable model,
 we use an inner model $M$ such that $\omega_1$
 is inaccessible in $M$.
Given a cardinal $\kappa$ and a tree $T \subseteq {^{<\omega}}(\omega \times \kappa)$,
 let $$p[T] := \{ x \in \baire : (\exists y \in {^{\omega} \kappa}) (\forall n \in \omega)\,
 \langle (x(0),y(0)), ..., (x(n),y(n))\rangle \in T\}.$$
 
\begin{lemma}
Assume there is no injection of $\omega_1$ into $\mathbb{R}$.
Let $A \subseteq [\omega]^\omega$.
Let $e : [\omega]^\omega \to \baire$ be the function that maps each $q \in [\omega]^\omega$ to its
 increasing enumeration.
Assume $A' := \{ e(q): q \in A \} \subseteq \baire$ is Suslin,
 meaning there is a cardinal $\kappa$
 and a tree $T \subseteq {^{<\omega}}(\omega \times \kappa)$
 such that $A' = p[T]$.
Then $A$ is Ramsey.
\end{lemma}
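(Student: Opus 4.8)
The plan is to adapt the Feng--Magidor--Woodin argument for ``Suslin implies Ramsey'', replacing their countable elementary submodel by the inner model $M = L[T,a,x]$, where $[a,x]$ is an arbitrary nonempty basic Ellentuck open set; since $M \models \zfc$, Mathias forcing over $M$ retains all its usual properties. First I would show that the hypothesis forces $\omega_1 := \omega_1^V$ to be inaccessible in $M$. Regularity of $\omega_1$ in $M$ follows from its regularity in $V$ (which holds using countable choice, as elsewhere in the paper), since a cofinal map lying in $M \subseteq V$ would witness the same small cofinality in $V$. For the strong limit property, suppose some $\gamma < \omega_1$ had $(2^\gamma)^M \ge \omega_1$. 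Fixing a bijection $\omega \to \gamma$ in $V$ (possible as $\gamma$ is countable in $V$) and composing with the $M$-wellordering of $\mathcal{P}(\gamma)^M$ would produce an injection of $\omega_1$ into $\mathbb{R}^V$, contradicting the hypothesis; so $(2^\gamma)^M < \omega_1$ for every $\gamma < \omega_1$. In fact only this strong limit consequence is needed below.

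Consequently, writing $\mathbb{M}^M$ for Mathias forcing as computed in $M$, the family $\mathcal{D}$ of all dense subsets of $\mathbb{M}^M$ that lie in $M$ has $M$-cardinality at most $(2^{(2^{\aleph_0})^M})^M < \omega_1$, where the last inequality applies the strong limit property to $\gamma = (2^{\aleph_0})^M < \omega_1$. Thus $\mathcal{D}$ is a set of size $< \omega_1$, i.e.\ countable in $V$. This is the step that replaces the countability of the FMW submodel, and I expect the cardinal bookkeeping here to be the conceptual crux.

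Next, working inside $M$, I would apply the Prikry property (pure decision) of Mathias forcing to the condition $(a,x) \in M$ and the statement ``$e(\dot g) \in p[\check T]$'', where $\dot g$ names the generic subset of $\omega$ and $e$ is the recursive increasing-enumeration map. This yields $x_1 \subseteq x$ with $(a,x_1) \in M$ deciding this statement while keeping the stem $a$ fixed. Then, enumerating $\mathcal{D}$ as $\langle D_n : n < \omega \rangle$ in $V$, I would carry out the standard Mathias fusion below $(a,x_1)$ to build $y \in [a,x_1]$ such that for every $z \in [a,y]$ the filter $G_z = \{(s,B) \in \mathbb{M}^M : s \sqsubset z \text{ and } z \setminus s \subseteq B\}$ meets every $D_n$, and is therefore $\mathbb{M}^M$-generic over $M$. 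The diagonalization treats the finitely many active stems at each stage and uses countable choice; producing the \emph{entire} generic cone $[a,y]$, rather than a single generic real, is the main technical point, and it is exactly the classical fact that a Mathias generic determines a completely Ramsey cone.

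Finally I would invoke absoluteness to read off homogeneity. Fix $z \in [a,y]$; since $a \sqsubset z \subseteq x_1$, the condition $(a,x_1)$ lies in $G_z$, so $M[z]$ satisfies the statement about $e(z) \in p[T]$ decided above. Because $e(z), T \in M[z]$ and wellfoundedness of the section tree $\{\sigma : (e(z) \restriction |\sigma|, \sigma) \in T\}$ is absolute between the transitive class $M[z]$ and $V$, we obtain $e(z) \in p[T]^{M[z]}$ iff $e(z) \in p[T]^V = A'$, that is, $z \in A$. Hence, according to which way $(a,x_1)$ decided the statement, either $[a,y] \subseteq A$ or $[a,y] \cap A = \emptyset$. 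Since $[a,x]$ was an arbitrary nonempty basic Ellentuck open set, it follows that $A$ is Ramsey.
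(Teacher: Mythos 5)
Your proposal is correct and follows essentially the same route as the paper's proof: pass to the inner model $M = L[T,\ldots]$ of ZFC, use the absence of an injection of $\omega_1$ into $\mathbb{R}$ to see that $\omega_1$ is a strong limit cardinal (the paper says inaccessible) in $M$, apply the Prikry property of Mathias forcing in $M$ to decide ``$e(\dot{g}) \in p[\check{T}]$'' with the stem fixed, obtain a cone of reals each of which is Mathias-generic over $M$, and conclude via wellfoundedness absoluteness of the section trees of $T$. The only divergence is in how the cone of generics is produced: the paper fixes a single generic $g$ and quotes the Mathias property (every infinite subset of a Mathias generic is generic over $M$), whereas you rebuild the cone by hand with a capturing/fusion argument against the $M$-dense sets, which your (correct) cardinal bookkeeping shows are countable in $V$ --- the same classical fact, proved rather than cited.
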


\begin{proof}
Let $[u,q] \subseteq [\omega]^\omega$ be a basic open neighborhood
 in the Ellentuck topology.
We will find a $g \in [u,q]$ such that either
 $[u,g] \subseteq A$ or $[u,g] \cap A = \emptyset$.
Let
 $M = L[T,q]$
 be the inner model generated by $T$ and $q$.
It satisfies the Axiom of Choice
 (because $T$ and $q$ can be coded by a set of ordinals),
 and so since there is no injection of $\omega_1$ into $\mathbb{R}$,
 it must be that $\omega_1$ is inaccessible in $M$.

By the nature of tree representations,
 if $N$ is any inner model which contains $M$, then
 $A' \cap N = (p[T])^N$.
Now let $\mathbb{P} \in M$
 be the Mathias forcing of $M$.
We have $[u,q] \in M$.
Let $\dot{g} \in M$ be the canonical name for the generic real,
 so $1 \forces \dot{g} \in [\omega]^\omega$. 
Consider the statement
 ``$e(\dot{g}) \in p[\check{T}]$''.
Since  $\mathbb{P}$ has the Prikry property,
 there is an $s \in [u,q]$ such that the condition
 $[u,s]$ decides the statement to be either true or false.
Assume for now that
 $$[u,s] \forces e(\dot{g}) \in p[\check{T}].$$

Now since $\omega_1$ is inaccessible in $M$,
 fix a real $g \in [\omega]^\omega$
 that is $\mathbb{P}$-generic over $M$
 such that $g \in [u,s]$.
But one property of Mathias forcing is
 ``the Mathias property''
 (see \cite{Mathias73} and \cite{MathiasThesis}).
In this case, it tells us that every $h \in [u,g]$ (in $V$)
 is $\mathbb{P}$-generic over $M$.
And so the condition $[u,s]$ forces
 each $h \in [u,g]$ (in $V$)
 to be such that $e(h) \in p[T] = A'$,
 so $h \in A$.
Hence,
 $[u,g] \subseteq A$.
 
 The proof for the case that $[u,s]\forces e(\dot{g}) \not\in p[\check{T}]$ is similar.
\end{proof}

If there exists a supercompact cardinal, 
 then  in $L(\mathbb{R})$ 
 every subset of a topological Ramsey space is Ramsey.
This can be shown by considering a topological Ramsey space $\mc{R} \in L(\mathbb{R})$
 and a set $S \subseteq \mc{R}$ in $L(\mathbb{R})$.
Fix a continuous bijection $\eta : \mc{R} \to \baire$ in $L(\mathbb{R})$.
We see that $f(S)$ is $2^\omega$-universally Baire in $V$
 (because large cardinals imply every subset of $\baire$ in $L(\mathbb{R})$
 is $2^\omega$-universally Baire).
Next $f^{-1}(f(S)) = S$ has the property of Baire in $\mc{R}$.
Then we apply the Abstract Ellentuck Theorem \ref{thm.AET}.
In the next theorem, we show that this follows directly from 
either $\ad_\mathbb{R}$ or else $\ad^+ + V=L(\mathcal{P}(\mathbb{R}))$.

\begin{lemma}\label{lem.ADimpliesRamsey}
\label{lem.ADallsetsRamsey}
Assume either 1) $\ad_\mathbb{R}$ or 2) $\ad^+ + V=L(\mathcal{P}(\mathbb{R}))$.
Let $\langle \mc{R}, \le, r \rangle$
 be an  axiomatized topological Ramsey space.
Then every $S \subseteq \mc{R}$ is Ramsey.
\end{lemma}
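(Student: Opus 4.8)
The plan is to transport $S$ to a set of reals through the projective presentation and then invoke the Suslin-implies-Ramsey machinery established just above, generalized from the Ellentuck space to $\mc{R}$. First I would observe that both hypotheses imply $\ad$ (in case~2, $\ad^+$ includes $\ad$; in case~1, $\ad_\mathbb{R}$ implies $\ad$), so countable choice for sets of reals holds and there is no injection of $\omega_1$ into $\mathbb{R}$. Hence Lemma~\ref{lem.tRsprojectivelypres} applies and supplies a continuous bijection $\eta : \mc{R} \to \baire$ together with the projective relations it guarantees. I would set $A := \eta[S]$ and reduce the whole lemma to the following claim, the exact analogue for $\mc{R}$ of the Mathias-forcing lemma proved above for $[\omega]^\omega$: \emph{if $A = \eta[S]$ is Suslin, then $S$ is Ramsey} (note that since $\eta$ is a bijection, $h \in S$ iff $\eta(h) \in A$).

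To prove this claim I would rerun the inner-model argument of the preceding lemma with $[\omega]^\omega$ replaced by $\mc{R}$ and Mathias forcing replaced by the Mathias-like forcing $\mathbb{P}_{\mc{R}}$ whose conditions are the pairs $[s,A]$. The two properties of Mathias forcing actually used, namely the Prikry property (pure decision) and the Mathias property (every $h \in [u,g]$ below a generic $g$ is again generic), hold for $\mathbb{P}_{\mc{R}}$ by \cite{Mijares07} and \cite{DiPrisco/Mijares/Nieto17}. Given a basic Ellentuck neighborhood $[u,X]$ in $\mc{R}$ and a tree $T$ with $A = p[T]$, I would pass to the inner model $M = L[T,\eta(X)]$, which satisfies $\zfc$ and, since there is no injection of $\omega_1$ into $\mathbb{R}$, has $\omega_1$ inaccessible. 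Pure decision inside $M$ yields $s \in [u,X]$ with $[u,s]$ deciding ``$\eta(\dot g) \in p[\check T]$''; inaccessibility of $\omega_1^M$ lets me choose an $M$-generic $g \in [u,s]$ lying in $V$; and the Mathias property forces every $h \in [u,g]$ to be $M$-generic, hence, by absoluteness of the tree representation $A = p[T]$ to the relevant inner models, to satisfy $\eta(h) \in p[T]$ (equivalently $h \in S$) in the ``true'' case and its negation in the ``false'' case. Either way $[u,g] \subseteq S$ or $[u,g] \cap S = \emptyset$, so $S$ is Ramsey.

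With the claim in hand I would finish by cases. Under $\ad_\mathbb{R}$ every set of reals is Suslin coSuslin, so $A = \eta[S]$ is Suslin and the claim applies directly. Under $\ad^+ + V = L(\mathcal{P}(\mathbb{R}))$, which yields $\Sigma^2_1$-reflection, I would argue by contradiction: if some $S \subseteq \mc{R}$ were not Ramsey, then ``there is a set $A \subseteq \baire$ such that $\eta^{-1}[A]$ is not Ramsey'' is a true $\Sigma^2_1$ statement, its inner matrix being projective in $A$ because Lemma~\ref{lem.tRsprojectivelypres} makes the order and the neighborhood relations $[u,X]$ projective. Reflection then produces a Suslin coSuslin witness $A^\ast$, so $\eta^{-1}[A^\ast]$ is a non-Ramsey set with $A^\ast$ Suslin, contradicting the claim.

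The hard part will be the second paragraph: verifying that the Mathias-forcing argument transfers verbatim to $\mathbb{P}_{\mc{R}}$ in the choiceless ambient universe. I expect the delicate points to be (i) checking that the Prikry and Mathias properties for $\mathbb{P}_{\mc{R}}$ are available here, which is unproblematic precisely because they are established relative to $\zfc$ ground models, the role played by the inner model $M$; (ii) the absoluteness of the Suslin representation $A = p[T]$ between $V$ and inner models $N \supseteq M$, which is what lets ``$\eta(h) \in p[T]$'' track membership in $S$; and (iii) in case~2, confirming that ``not Ramsey'' is genuinely projective-in-$A$, so that the existential statement is $\Sigma^2_1$ and reflection applies. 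Everything else is bookkeeping transported through the continuous bijection $\eta$.
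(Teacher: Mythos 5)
This is essentially the paper's own proof: the same reduction through the continuous bijection $\eta$ of Lemma~\ref{lem.tRsprojectivelypres}, the same inner-model argument combining the Prikry and Mathias properties of the Mathias-like forcing for $\mc{R}$ (Theorems 6.7 and 6.24 of \cite{DiPrisco/Mijares/Nieto17}) with inaccessibility of $\omega_1$ in the relevant $L[T,\dots]$, and the same case split (under $\ad_{\mathbb{R}}$ every set of reals is Suslin, while under $\ad^+ + V=L(\mathcal{P}(\mathbb{R}))$ one uses $\Sigma^2_1$-reflection to extract a Suslin coSuslin counterexample). The one point to patch is that your inner model must also contain a real coding the space structure $(\mathcal{AR},\le_{\mathrm{fin}})$ --- the paper arranges this by demanding Suslin representations of $=$, $\le$ and the $r$ function as part of the (reflected) counterexample, so that ``$T$ can be used to talk about $\mc{R}\cap N$ and $\mc{S}\cap N$'' --- since otherwise $L[T,\eta(X)]$ alone cannot define the forcing $\mathbb{P}_{\mc{R}}$ or decide statements about membership in $\mc{R}$ and $\mc{S}$.
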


\begin{proof}
First, suppose there is a counterexample
 $(\langle \mc{R}, \le, r \rangle, \mc{S}).$
Since $\mc{R}$ is axiomatized,
 there is a continuous bijection
 $\eta : \mc{R} \to \baire$ as in
 Lemma~\ref{lem.tRsprojectivelypres}.
Then there must be a counterexample of the form
 $(\langle \mc{R}, \le, r\rangle, S )$
 such that there is a bijection
 $\eta : \mc{R} \to \baire$ such that
\begin{itemize}
\item $\{ (x,y) \in \baire \times \baire: \eta^{-1}(x) = \eta^{-1}(y) \}$ is Suslin,
\item $\{ (x,y) \in \baire \times \baire: \eta^{-1}(x) \le \eta^{-1}(y) \}$ is Suslin,
\item the set coding the $r$ function is Suslin, and
\item $\{ x : \eta^{-1}(x) \in S \}$ is Suslin.
\end{itemize}
Here is why:
 if we have 1), then every set of reals is Suslin.
If we have 2), then by $\Sigma^2_1$ reflection,
 if there were (a set of reals coding) a counterexample, there would be one
 that is Suslin coSuslin.
We will now show that if $\mathcal{Z} \subseteq \baire$
 is a set of reals coding a $(\langle \mathcal{R}, \le, r\rangle, S)$,
 then in fact $S \subseteq \mathcal{R}$ is Ramsey.

We now argue just as in the previous lemma.
Fix $[u,q]$.
We will find a $g \in [u,q]$ such that either
 $[u,g] \subseteq S$ or $[u,g] \cap S = \emptyset$.
Let $\kappa$ be a cardinal and $T \subseteq {}^{<\omega}(\omega \times \kappa)$
 be a tree such that $\mathcal{Z} = p[T]$.
Let $M$ be the inner model $L[T,g]$.
Again $T$ and $g$ can be coded as sets of ordinals.
Both 1) and 2) imply there is no injection of $\omega_1$ into $\mathbb{R}$,
 so $\omega_1$ is inaccessible in $M$.
Note that in any inner model $N$ containing $M$,
 $T$ can be used to talk about $\mc{R} \cap N$ and $\mc{S} \cap N$.
For example, given any $s \in \mc{R} \cap N$,
 $N$ knows whether or not $s \in \mc{S}$.

Let $\mathbb{P}$ be the Mathias forcing
 associated to $\mc{R}$ in $M$.
 Conditions are non-empty basic open sets  $[a,q]$ where $a\in\mathcal{AR}$ and $q\in\mathcal{R}$. 
The ordering is
 $[a,q] \le [b,s]$ iff
 $[a,q] \subseteq [b,s]$.
Let $\dot{g}$ be the name for the generic object, so
 $1 \forces \dot{g} \in \mc{R}$.
The forcing $\mathbb{P}$ has the Prikry property
 (see Theorem 6.7 in \cite{DiPrisco/Mijares/Nieto17}).
So fix $s \in [a,q]$ such that either
 $[a,s] \forces \dot{g} \in \mc{S}$ or
 $[a,s] \forces \dot{g} \not\in \mc{S}$.
Without loss of generality,
 assume the former.

Since $\omega_1$ is inaccessible in $M$,
 fix a $g \in \mc{R}$ that is $\mathbb{P}$-generic over $M$
 such that $g \in [a,s]$.
But $\mathbb{P}$ also has the 
Mathias property
 (see Theorem 6.24 in \cite{DiPrisco/Mijares/Nieto17}).
So every $h \in [a,g]$ (in $V$) is $\mathbb{P}$-generic over $M$.
So $[a,s]$ forces each $h \in [a,g]$ (in $V$)
 to be such that $h \in \mathcal{S}$.
Hence $[a,g] \subseteq \mc{S}$.
\end{proof}

In the following theorem, recall the definition at the end of Subsection \ref{subsec.2.2} of the ultrafilter $\mathcal{U}_{\mathcal{R}}$ forced by $\lgl \mathcal{R},\le^*\rgl$.

\begin{theorem}\label{thm.barrenufs}
Assume that either 1) $\ad_\mathbb{R}$ holds in $V$ or 2) $\ad^+ + V = L(\mathcal{P}(\mathbb{R}))$.
Let $\lgl \mathcal{R}, \le,r\rgl$ be an axiomatized topological Ramsey space and  $\le^*$ be a $\sigma$-closed coarsening of $\mathcal{R}$.
Suppose there is some extended coarsening 
$\lgl \mathcal{R},\mathcal{R}^*,\le,\le^*\rgl$   satisfying the  Left-Right Axiom, and let $\mathcal{U}_{\mathcal{R}}$ be the ultrafilter forced by $\lgl \mathcal{R},\le^*\rgl$  over $V$.
Then $V$ and $V[\mathcal{U}_{\mathcal{R}}]$ have the same sets of ordinals;
 moreover, every sequence in $V[\mathcal{U}_{\mathcal{R}}]$ of elements of $V$ lies in $V$.
\end{theorem}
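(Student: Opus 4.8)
The plan is to derive Theorem~\ref{thm.barrenufs} by assembling the two principal results of this section, Lemma~\ref{lem.ADimpliesRamsey} and Theorem~\ref{thm.barren}, bridged by the observation that the generic ultrafilter lives inside the generic extension.

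First I would apply Lemma~\ref{lem.ADimpliesRamsey}: under either hypothesis 1) or 2), since $\lgl\mathcal{R},\le,r\rgl$ is axiomatized, every $S\sse\mathcal{R}$ is Ramsey in the sense of Definition~\ref{defn.5.2}. Specializing to $u=\emptyset$ is legitimate, since $r_0(Y)=\emptyset$ for every $Y$ by \textbf{A.1}, so that $[\emptyset,X]=[X]$; hence every subset of the poset $\lgl\mathcal{R},\le\rgl$ is Ramsey in the poset sense. I would then promote this two-valued statement to the statement that all cubes of $\lgl\mathcal{R},\le\rgl$ are Ramsey by a short induction on the number $k$ of colors: given $c:[x]\to k$, apply poset-Ramseyness to $c^{-1}(0)$ to obtain $y_0\le x$ with either $[y_0]\sse c^{-1}(0)$, in which case $c$ is constant on $[y_0]$, or $[y_0]\cap c^{-1}(0)=\emptyset$, in which case $c\re[y_0]$ uses only $k-1$ colors and we recurse below $y_0$, where subsets are again Ramsey.

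Having verified the cube hypothesis, I would apply Theorem~\ref{thm.barren} to the extended coarsened poset $\lgl\mathcal{R},\mathcal{R}^*,\le,\le^*\rgl$, which by assumption satisfies the Left-Right Axiom. This yields that any generic extension $N=V[G]$ of $V$ by the forcing $\lgl\mathcal{R},\le^*\rgl$ has the same sets of ordinals as $V$, and that every sequence in $N$ of elements of $V$ already lies in $V$.

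Finally I would descend from $V[G]$ to $V[\mathcal{U}_{\mathcal{R}}]$. By the definition of $\mathcal{U}_{\mathcal{R}}$ in $(\ref{eq.U_G})$, the ultrafilter is computed from the generic filter $G$, so $\mathcal{U}_{\mathcal{R}}\in V[G]$ and therefore $V\sse V[\mathcal{U}_{\mathcal{R}}]\sse V[G]$. Consequently every set of ordinals of $V[\mathcal{U}_{\mathcal{R}}]$ is a set of ordinals of $V[G]$, hence an element of $V$; combined with $V\sse V[\mathcal{U}_{\mathcal{R}}]$, this shows $V$ and $V[\mathcal{U}_{\mathcal{R}}]$ have exactly the same sets of ordinals. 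The same sandwiching shows that any sequence in $V[\mathcal{U}_{\mathcal{R}}]$ of elements of $V$ is such a sequence in $V[G]$ and so lies in $V$. I expect the only step requiring genuine care to be the passage from the single-set Ramsey property of Definition~\ref{defn.5.2} to the multi-colour cube property demanded by Theorem~\ref{thm.barren}; the remainder is bookkeeping, once one observes that $\mathcal{U}_{\mathcal{R}}$ is contained in the generic extension so that the sandwiching applies.
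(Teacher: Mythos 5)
Your proposal is correct and follows essentially the same route as the paper's own proof: invoke Lemma~\ref{lem.ADimpliesRamsey} to get that all subsets of $\mathcal{R}$ are Ramsey, deduce that all cubes of $\lgl\mathcal{R},\le\rgl$ are Ramsey, and then apply Theorem~\ref{thm.barren}. The details you supply --- the specialization to $u=\emptyset$, the induction on the number of colors, and the sandwiching $V\sse V[\mathcal{U}_{\mathcal{R}}]\sse V[G]$ --- are precisely the steps the paper leaves implicit in its two-line argument.
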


\begin{proof}
By the previous lemma, all subsets of $\mathcal{R}$ are Ramsey.
Hence  it holds
 that  for
 any $x\in\mathcal{R}$, $k\ge 1$ and coloring $c:[x]\ra k$,  there is some $y\in[x]$ such that $c\re [y]$ is constant. 
 The rest follows from Theorem \ref{thm.barren}.
\end{proof}

The first half  of Theorem \ref{thm.maintRs} follows from Theorem \ref{thm.barrenufs}.


\section{Preservation of Strong Partition Cardinals}\label{sec.4}

In \cite{Henle/Mathias/Woodin85}, 
Henle, Mathias, and Woodin proved that $\mathcal{P}(\om)/\mathrm{fin}$ preserves strong partition cardinals over a model of 
  ZF + EP + LU (Theorem \ref{thm.3.3}).
In this section, we  
extend their result to a wide array of forcings, 
 providing
 conditions which guarantee that  a forcing preserves 
 uncountable strong partition cardinals.

Given a coarsened poset $\langle X,\le, \le^*\rangle$,
a function
 $f$ from $X$ to some other set $Y$ is  called {\em invariant} if and only if 
whenever  $p,q \in X$  satisfy $p =^* q$,
then  $f(p) = f(q)$.
   Similarly, for any  subset $S\sse X$,
 a function 
 $f : S \to Y$  is {\em invariant} if and only if 
whenever  $p,q \in S$  and  $p =^* q$,
then  $f(p) = f(q)$.
We call a set $S \subseteq X$ {\em invariant}
 if and only if
  its characteristic function (from $X$ to $2$) is invariant.
Given a function $f$ whose domain is a subset of $X$\
 (a partial function),
 we call $f$ invariant iff for $p,q \in \dom(f)$ with $p =^* q$,
  then $f(p) = f(q)$.
We call a function $f : X \to Y$ invariant below $p \in X$
 iff $f \restriction [p]$ is an invariant partial function.
A set $S \subseteq X$ is invariant below $p$
 iff $(\forall q_1, q_2 \le p)$
 if $q_1 =^* q_2$, then
 $q_1 \in S \Leftrightarrow q_2 \in S$.

\begin{definition}\label{defn.projpres}
Given $S \subseteq X$ and $p \in X$,
 define
 \begin{align}
 S^+_p &= \{ q \le p :
[q] \subseteq S \}\cr
 S^-_p &= \{ q \le p :[q] \cap S = \emptyset \}.
 \end{align}
We call  $S$ {\em  Ramsey below $p$},
 or simply {\em $R$ below $p$},
 iff $S^+_p \cup S^-_p$ is $\le$-dense below $p$.
We say that  $S$ is  {\em R$^+$ below $p$}
 iff $S^+_p$ is $\le$-dense below $p$, and  $S$  is {\em R$^-$ below $p$}
 iff $S^-_p$ is $\le$-dense below $p$.
 
We shall say that $S$ is {\em Ramsey} iff $S$ is Ramsey below $p$ for each $p\in X$.
Likewise for {\em  $R^+$} and {\em  $R^-$}.
 \end{definition}

 \begin{remark}
 Note that the definition of {\em Ramsey} in Definition \ref{defn.projpres} is weaker than that of Todorcevic in Definition \ref{defn.5.2}.
 As no ambiguity will arise, we use this term  rather than defining yet more terminology. 
 \end{remark}

 Note that if $S$ is invariant and
 $[q] \subseteq S$, then
 $[q]^* \subseteq S$.
 Likewise, if $S$ is invariant and 
 $[q] \cap S = \emptyset$,
 then
 $[q]^* \cap S = \emptyset$.

The following definition of  $\mbox{LU}(\mbb{P})$ extends 
 the Axiom LU in \cite{Henle/Mathias/Woodin85} for 
 $([\om]^{\om},\sse^*)$ to all partial orderings $\mathbb{P}$.

\begin{definition}\label{def.LUP}
Given a poset $\mbb{P} = \langle X, \le \rangle$,
 $\mbox{LU}(\mbb{P})$ is the statement that given
 any relation $R \subseteq X \times
 {^\omega 2}$ and $p \in X$ such that
 $$(\forall x \le p)(\exists y \in {^\omega 2})\,
 R(x,y),$$
 there is some $q \le p$ and some function
 $f : [q] \to {^\omega 2}$
 such that $$(\forall r \le q)\,
 R(r,f(r)).$$
\end{definition}




\begin{observation}
\label{adr_observation}
Let $\mbb{P} = \langle X, \le \rangle$ be a  poset for which  there is an injection
 $\eta : X \to \baire$.  Then the Uniformization Axiom   implies $\mbox{LU}(\mbb{P})$.
\end{observation}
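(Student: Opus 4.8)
The plan is to show that $\mbox{LU}(\mbb{P})$ is essentially a special case of the general Uniformization Axiom, transported from $X$ to the Baire space via the injection $\eta$. Recall that the Uniformization Axiom states that every relation $R \subseteq \baire \times \baire$ (with every section nonempty over some domain) admits a uniformizing function selecting, for each $x$ in the domain, a single $y$ with $R(x,y)$. The task is to package the given data $R \subseteq X \times {}^\omega 2$ and $p \in X$ into a relation on the Baire space, apply uniformization there, and then pull the resulting function back through $\eta$ to obtain the desired $f : [q] \to {}^\omega 2$.

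First I would use the injection $\eta : X \to \baire$ to define a relation $R' \subseteq \baire \times \baire$ by setting $R'(u,v)$ to hold exactly when $u \in \im(\eta)$, the element $x := \eta^{-1}(u)$ satisfies $x \le p$, and $R(x,v)$ holds (identifying ${}^\omega 2$ as a subset of $\baire$, or composing with a fixed injection of ${}^\omega 2$ into $\baire$ and its inverse on the range). The hypothesis $(\forall x \le p)(\exists y \in {}^\omega 2)\, R(x,y)$ guarantees that for every $u$ in the set $D := \{\eta(x) : x \le p\}$, the section $\{v : R'(u,v)\}$ is nonempty. The Uniformization Axiom then yields a function $g : \baire \to \baire$ (or at least on $D$) with $R'(u, g(u))$ for all $u \in D$.

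Next I would pull $g$ back: define $f : [p] \to {}^\omega 2$ by $f(x) = g(\eta(x))$, arranging (via the definition of $R'$ and, if needed, a fixed retraction onto ${}^\omega 2$) that the values indeed land in ${}^\omega 2$. Since $\eta$ is injective, $f$ is a well-defined function on $[p]$, and for each $x \le p$ we have $R(x, f(x))$ directly from $R'(\eta(x), g(\eta(x)))$. Taking $q = p$ and restricting $f$ to $[q]$ gives precisely the conclusion of $\mbox{LU}(\mbb{P})$, with the stronger feature that we may take $q = p$ itself.

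The only genuinely delicate point is bookkeeping about the codomain: the Uniformization Axiom is naturally stated for relations on $\baire \times \baire$, whereas $\mbox{LU}(\mbb{P})$ demands values in ${}^\omega 2$. I expect this to be the main (though minor) obstacle, and I would handle it by fixing once and for all a homeomorphic embedding $\iota : {}^\omega 2 \hookrightarrow \baire$ with a definable left inverse, encoding $R$ through $\iota$ before uniformizing and decoding afterward. No determinacy beyond the uniformization hypothesis is needed, and injectivity of $\eta$ is exactly what makes the pullback single-valued; everything else is routine.
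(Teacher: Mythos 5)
Your proposal is correct and follows essentially the same route as the paper: transport $R$ through $\eta$ to a relation over $\baire$, apply the Uniformization Axiom there, and pull the uniformizing function back by composing with $\eta$, taking $q = p$ and restricting to $[p]$. The only cosmetic difference is that the paper totalizes the relation by declaring points outside $\im(\eta)$ related to everything (and keeps ${}^\omega 2$ as the codomain directly), whereas you restrict to the domain $\{\eta(x) : x \le p\}$ and handle the codomain via an embedding of ${}^\omega 2$ into $\baire$; neither choice changes the substance of the argument.
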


\begin{proof}
Fix $p \in X$
 and a relation $R \subseteq X
 \times {^\omega 2}$
 such that
 $(\forall x \le p)(\exists y \in {^\omega 2})\,
 R(x,y)$.
Consider the relation
 $\tilde{R} \subseteq \baire \times {^\omega 2}$
 defined by $\tilde{R}(\tilde{x},y)$ iff
 either
 $\tilde{x} \not\in \im(\eta)$,
 or
 $R(\eta^{-1}(\tilde{x}),y)$.
By the Uniformization Axiom,
 there is a uniformization
 $\tilde{f} : \baire \to {^\omega 2}$
 of $\tilde{R}$.
This induces a uniformization  function
$f = \tilde{f} \circ \eta$
for $R$.
That is,
 $$(\forall x \le p)\, R(x,f(x)).$$
Thus,  $\mbox{LU}(\mbb{P})$ holds,
 as witnessed by 
 $f \restriction [p]$.
\end{proof}


\begin{definition}
Let $\langle \mc{R}, \le, r \rangle$
 be a topological Ramsey space and let
 $\mbb{P} = \langle \mc{R}, \le \rangle$.
Then $\mbox{LCU}(\mbb{P})$ is  the statement
 $\mbox{LU}(\mbb{P})$, where additionally $f$ is required to
 be continuous with respect to the metric topology
 on $\mc{R}$.
${\mbox{LCU}}^+(\mbb{P})$ is the same statement
 as $\mbox{LCU}(\mbb{P})$ but  replacing 
 ${^\omega 2}$ with $\baire$.
\end{definition}

Certainly $\mbox{LCU}^+(\mbb{P})$ implies
 $\mbox{LCU}(\mbb{P})$.
The other direction holds as well:

\begin{proposition}
\label{lcu_implies_baireclassone}
$\mbox{LCU}(\mbb{P})$ implies $\mbox{LCU}^+(\mbb{P})$.
\end{proposition}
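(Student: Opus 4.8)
The plan is to reduce $\mbox{LCU}^+(\mbb{P})$ to the already-granted $\mbox{LCU}(\mbb{P})$ by means of a fixed continuous coding of Baire space inside Cantor space. Let $E : \baire \to {^\omega 2}$ be the injection sending $z$ to the binary string $0^{z(0)} 1\, 0^{z(1)} 1\, 0^{z(2)} 1 \cdots$, and let $D = \im(E) \sse {^\omega 2}$ be its range, which is exactly the set of binary sequences having infinitely many $1$'s. First I would record the two relevant topological facts: $E$ is continuous, since the initial segment of $E(z)$ up to and including its $n$-th $1$ depends only on $z \re n$; and the inverse $E^{-1} : D \to \baire$ is continuous with respect to the subspace topology on $D$, since to compute $(E^{-1}(w)) \re n$ one only needs to read the finite initial segment of $w$ up to its $n$-th $1$. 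Neither map need be extended off $D$.

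Now suppose $R \sse X \times \baire$ and $p \in X$ witness the hypothesis of $\mbox{LCU}^+(\mbb{P})$, so that $(\forall x \le p)(\exists y \in \baire)\, R(x,y)$. I would transfer $R$ to a Cantor-valued relation $R' \sse X \times {^\omega 2}$ by declaring $R'(x,w)$ to hold iff $w \in D$ and $R(x, E^{-1}(w))$. Given $x \le p$, choosing $y$ with $R(x,y)$ and setting $w = E(y) \in D$ gives $R'(x,w)$; hence $R'$ and $p$ satisfy the hypothesis of $\mbox{LCU}(\mbb{P})$. Applying $\mbox{LCU}(\mbb{P})$ yields some $q \le p$ and a continuous $g : [q] \to {^\omega 2}$ with $R'(r, g(r))$ for every $r \le q$. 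By the definition of $R'$, each value $g(r)$ lies in $D$, so I may set $f = E^{-1} \circ g : [q] \to \baire$; then $R(r, f(r))$ holds for all $r \le q$, as required.

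The only point needing care — and the main, if modest, obstacle — is the continuity of $f$. Since $g$ is continuous into ${^\omega 2}$ and its image is contained in $D$, it is continuous as a map $[q] \to D$ with the subspace topology; composing with the continuous $E^{-1} : D \to \baire$ shows that $f$ is continuous with respect to the metric topology on $\mc{R}$, which is precisely what $\mbox{LCU}^+(\mbb{P})$ demands. Since the reverse implication was already observed (as ${^\omega 2} \sse \baire$ makes it immediate), this completes the reduction and hence the proof.
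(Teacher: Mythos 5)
Your proof is correct and takes essentially the same approach as the paper's: both use the identical coding $z \mapsto 0^{z(0)}1\,0^{z(1)}1\cdots$ of $\baire$ into ${^\omega 2}$, pull the given relation back through this coding to get a Cantor-valued relation, apply $\mbox{LCU}(\mbb{P})$, and then compose the resulting continuous uniformization with the continuous inverse of the coding. Your explicit verification that the composition is continuous (via the subspace topology on the image of the coding) simply spells out what the paper's proof leaves implicit.
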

\begin{proof}
Recall  that there is an injection
 $\varphi : \baire \to {^\omega 2}$ such that
 $\varphi^{-1} : \im(\varphi) \to \baire$
 is continuous.
For example, the function $\varphi$ that takes a sequence
 $\langle a_0, a_1, ... \rangle \in \baire$ to the sequence
 $$\overbrace{0 ... 0}^{a_0} 1
   \overbrace{0 ... 0}^{a_1} 1 ....$$
is such a function.
Now consider any $\tilde{R} \subseteq X \times \baire$
 and $p \in X$ such that
 $(\forall x \le p)(\exists y \in \baire)\, \tilde{R}(x,y).$
Define $R \subseteq X \times {^\omega 2}$
 by $(x,y) \in R$ iff
 $y \in \im(\varphi)$ and
 $(x,\varphi^{-1}(y)) \in \tilde{R}$.
Applying $\mbox{LCU}(X)$ to $R$,
 there is some  $q \le p$ and some
 continuous $f : [q] \to {^\omega 2}$
 which  uniformizes $R$ below $q$.
But then $\varphi^{-1} \circ f$
 uniformizes $\tilde{R}$ below $q$ and is
 continuous.
\end{proof}


The following proposition was proved by Mathias \cite{Mathias77}
for  relations of the form 
 $R \subseteq [\omega]^\omega \times \baire$,
 assuming $\omega \rightarrow (\omega)^\omega_2$.
 Todorcevic extended this to  relations of the form
 $R \subseteq [\omega]^\omega \times X$, where
 $R$ is coanalytic and $X$ is an arbitrary Polish space.
 This is stated in \cite{TodorcevicBK10}; a proof appears as
Theorem 7 in \cite{DobrinenCreaturetRs16}, and we follow 
the structure of that  proof.
   First we use
 the hypotheses to find a  uniformization $f$,
 and then perform a fusion argument to find
 a set $[q]$ on which $f$ is continuous.

\begin{proposition}
\label{Mathias_prop}
Let $\langle \mc{R},  \le, r \rangle$
 be a closed axiomatized topological Ramsey space, and let
 $\mbb{P}$ be the poset $\langle \mc{R}, \le \rangle$.
Also assume that either 1) $\ad_\mbb{R}$ holds  or
 2) $\ad^+ + V=L(\mathcal{P}(\mathbb{R}))$ holds.
Then $\mbox{LCU}(\mbb{P})$ holds.
\end{proposition}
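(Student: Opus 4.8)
The plan is to follow the two-step strategy indicated after the statement and modeled on Theorem~7 of \cite{DobrinenCreaturetRs16}: first produce \emph{some} (not necessarily continuous) uniformizing function $f\colon[p]\to{^\omega 2}$ for the total relation $R$, and then refine below some $q\le p$ so that $f\restriction[q]$ becomes continuous. Throughout I would use two facts available under either hypothesis: by Lemma~\ref{lem.ADimpliesRamsey} every subset of $\mc{R}$ is Ramsey, and by Lemma~\ref{lem.tRsprojectivelypres} there is a continuous bijection $\eta\colon\mc{R}\to\baire$. The continuity refinement is where the topological Ramsey structure does the work, while the two hypotheses differ only in how the initial uniformization is obtained.

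For the continuity step I would isolate the following claim: \emph{if every subset of $\mc{R}$ is Ramsey and $f\colon[p]\to{^\omega 2}$ is an arbitrary function, then there is $q\le p$ with $f\restriction[q]$ continuous in the metric topology.} To see this, note that for each $n$ the level set $\{X\in[p]:f(X)(n)=1\}$ is Ramsey, hence has the property of Baire in the Ellentuck topology; thus every preimage under $f$ of a basic clopen subset of ${^\omega 2}$ has the Ellentuck property of Baire. A function all of whose such preimages have this property is continuous on a full cube $[q]$, obtained by a diagonal fusion that, stem by stem, uses the Ramsey property (equivalently the Abstract Ellentuck Theorem~\ref{thm.AET}) to stabilize each coordinate $f(\cdot)(n)$ along a shrinking sequence of conditions, exactly as in the fusion portion of Theorem~7 of \cite{DobrinenCreaturetRs16} and Chapter~5 of \cite{TodorcevicBK10}. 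The resulting $q$ makes $f(X)(n)$ depend only on a finite approximation $r_m(X)$ for each $n$, which is precisely metric continuity.

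It then remains to produce the uniformizing $f$ in each case. Under $\ad_\mathbb{R}$ the Uniformization Axiom holds, so Observation~\ref{adr_observation} (applied with $\eta$) yields $\mbox{LU}(\mbb{P})$ directly, and hence a function $f\colon[p]\to{^\omega 2}$ with $R(x,f(x))$ for all $x\le p$; applying the claim above finishes this case. Under $\ad^+ + V=L(\mathcal{P}(\mathbb{R}))$ full uniformization is unavailable, so I would argue by contradiction using $\Sigma^2_1$-reflection, as in Lemma~\ref{lem.ADimpliesRamsey}. Suppose $\mbox{LCU}(\mbb{P})$ fails; then, coding $\mc{R},\le,r,\eta,p$ together with a witnessing relation $R$ by sets of reals, the existence of a counterexample is a $\Sigma^2_1$ statement, so by $\Sigma^2_1$-reflection there is a Suslin (coSuslin) relation $R'$ that is still a counterexample: $R'$ is total below $p$ yet admits no continuous uniformization on any $[q]$ with $q\le p$. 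But a Suslin relation admits a scale, and therefore can be uniformized; transporting through $\eta$, this gives an arbitrary function $f\colon[p]\to{^\omega 2}$ with $R'(x,f(x))$ for all $x\le p$. The claim above then produces $q\le p$ with $f\restriction[q]$ continuous, so $f\restriction[q]$ is a continuous uniformization of $R'$ below $q$, contradicting that $R'$ is a counterexample.

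I expect the main obstacle to be the continuity claim: upgrading ``each level set has the property of Baire'' to genuine metric continuity on an \emph{entire} cube $[q]$ (rather than merely on a comeager set) requires the full fusion machinery of the space, with care taken to diagonalize over the countably many coordinates and the relevant stems simultaneously. The secondary delicate point, in the second case, is verifying that the counterexample statement is genuinely $\Sigma^2_1$ and that Suslin relations can be uniformized under $\ad^+$; both are standard, but must be stated precisely so that reflection and scale-uniformization are legitimately invoked.
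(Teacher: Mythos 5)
Your proposal matches the paper's proof in both structure and substance: the same continuity claim established by fusion using the Ramseyness of all subsets of $\mc{R}$, the same use of Observation~\ref{adr_observation} under $\ad_\mathbb{R}$, and the same $\Sigma^2_1$-reflection argument producing a Suslin coSuslin counterexample (uniformizable via scales) under $\ad^+ + V=L(\mathcal{P}(\mathbb{R}))$, contradicted by the continuity claim. The two delicate points you flag (the fusion over stems and the $\Sigma^2_1$ complexity of the counterexample statement via the projective presentation) are exactly the ones the paper's proof handles in detail, so your outline is correct as it stands.
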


\begin{proof}
\noindent \it Claim. \rm
Suppose  $R \subseteq \mc{R} \times {^\omega 2}$ is  a relation and 
 $f : [p^*] \to {^\omega 2}$ is  a uniformization for $R$.
 Then there is  a $q \le p^*$ for which 
 $f \restriction [q]$ is continuous.
\vskip.1in

First, let $p_0 \le p^*$ be such that
 each $q \le p_0$ has the same value
 for $f(q)(0)$.
Such a $p_0$ exists because, by Lemma 
\ref{lem.ADimpliesRamsey},
all subsets of $\mathcal{R}$ are Ramsey, including the set $\{q\le p:f(q)(0)=0\}$.
Let  $s_1 = r_1(p_0)$.
The proof proceeds by induction, recalling the definition of {\em depth} in Section \ref{sec.2} just before Axiom \bf A.3\rm.

Let $n\ge 0$ be fixed and suppose that we have chosen $p_m$ for all $m\le n$ and,  letting $s_{m+1}=r_{m+1}(p_m)$, the following hold for each $0\le m\le n-1$:
\begin{enumerate}
\item
$p_{m+1}\in [s_{m+1},p_m]$; and 
\item
For each $t\le_{\mathrm{fin}} s_{m+1}$ with
$\depth_{p_m}(t)=\depth_{p_m}(s_{m+1})$,
there is a sequence $g_t:m+1 \rightarrow \{0,1\}$ such that 
for each $q\in [t,p_{m+1}]$ and each $k\le m$,
$f(q)(k)=g_t(k)$.
\end{enumerate}
Note that this induction hypothesis is satisfied vacuously for $n=0$, and that for each $m\le n$, $\depth_{p_m}(s_{m+1})=m+1$.

Given $s_{n+1}=r_{n+1}(p_n)$,
let $T$ denote the set of all $t\le_{\mathrm{fin}} s_{n+1}$ for which 
$\depth_{p_n}(t)=\depth_{p_n}(s_{n+1})$.
$T$ is finite, by Axiom \bf A.2 \rm (1).
Let $\triangleleft$  be the ordering of 
 the members of $T$ induced by $\eta$.

Let $t$ be the $\triangleleft$--least member of $T$ for which $p_t$ has not yet been chosen. 
If $t$ is not 
$\triangleleft$--minimum in $T$, then let $u$ denote the $\triangleleft$--predecessor  of  $t$ in $T$.
If  $t$ is 
$\triangleleft$--minimal in $T$, then let $p_u$ denote $p_n$.
For each sequence $g: n+1 \rightarrow 2$, define 
\begin{equation}
\mathcal{X}^{t}_g=\{q\in [t,p_u]:\forall k\le n\, (f(q)(k)=g(k))\}.
\end{equation}
These sets $\mathcal{X}^{t}_g$, $g\in {}^{n+1}2$, form a partition of the basic open set $[t,p_u]$ into finitely many pieces.
Since each piece of the partition is  Ramsey,
there are $q_t\in [t,p_u]$ and  $g_{t}\in {}^{n+1}2$ for which $[t,q_{t}] \subseteq \mathcal{X}^{t}_{g_{t}}$.
By Axiom \bf A.3 \rm (2),
there is some $p_{t}\in [s_{n+1},p_u]$ such that 
$[t,p_{t}] \subseteq [t, q_{t}]$.
At the end of this induction on $(T,\triangleleft)$,
let $p_{n+1}=p_{t^*}$, where $t^*$ denotes the $\triangleleft$--maximum member of $T$.
Note that for each $t\in T$, $p_{n+1} \le p_t$, so in particular,
\begin{equation}
[t, p_{n+1}] \subseteq [t,p_t] \subseteq [t,q_t].
\end{equation}
Thus, for each $q\in [t, p_{n+1}]$ and  $k\le n$, $f(q)(k)=g_t(k)$.
Hence, (1) and (2) hold for $p_{n+1}$.

Let $q=\bigcup_{n\ge 1}s_{n}$.
Since  each $s_{n+1}\sqsupset s_n$ and  $\mathcal{R}$ is a closed topological Ramsey space, it follows that $q$ is a member of $\mathcal{R}$.
We claim that $f$ is continuous on $[q]$.
Suppose $q'\le q$ and $n<\omega$.
Then $f(q')(n)$ is determined by $r_k(q')$, where $k$ is minimal such that $\depth_q(r_k(q'))>n$.
To see this,
let $g: n+1 \rightarrow 2$ be given, and let $N_g$ denote
 $\{h\in {}^{\omega}2:h\restriction (n+1)=g\}$,
 the basic open set in ${}^{\omega}2$ determined by $g$.
Then 
\begin{align}
f^{-1}(N_g)\cap [q] & =
\{q'\le q:f(q)\restriction (n+1)=g\}\cr
&=\bigcup\{[t,q]:t\in\mathcal{AR}| q,\ \depth_q(t)>n,\ \mathrm{and}\ g_t
\restriction (n+1)=g\}, \cr
\end{align}
which is a union of basic open set in the metric topology on $\mathcal{R}$ restricted to $[q]$.
This concludes the proof of the Claim.
\vskip.1in

Supposing  $\ad_\mbb{R}$ holds, 
let $p^*$ in $\mathcal{R}$ be given, and
let $R \subseteq \mc{R} \times {^\omega 2}$
 be a relation with the property that for each $x \le p^*$
 there exists $y \in {^\omega 2}$ such that
 $R(x,y)$.
By the argument in
 Observation~\ref{adr_observation},
 there is a uniformization
 $f : [p^*] \to {^\omega 2}$ for $R$.
Then  the  Claim  implies that 
there  is some $q\le p^*$ such that $f\re[q]$ is continuous. Thus, 
 $\mbox{LCU}(\mbb{P})$  holds.

Now assume $\ad^+ + V=L(\mathcal{P}(\mathbb{R}))$.
 Let  $\eta : \mc{R} \to \baire$ be the continuous bijection defined in 
 Lemma \ref{lem.tRsprojectivelypres}.
Given any relation $\tilde{R} \subseteq \baire \times {^\omega 2}$,
 let $\varphi(\tilde{R})$ be the conjunction
 of the following formulas:
\begin{itemize}
\item $(\forall x \in \baire)(\exists y \in {^\omega 2})\,
 \tilde{R}(x,y)$;
\item $
 (\forall p \in \mathcal{R})
 (\forall \mbox{ continuous } f :
 [p] \to {^\omega 2} )
 (\exists q \le p)\,
 \neg \tilde{R}(\eta(q),f(q))$.
\end{itemize}
By part 4) of
 Definition~\ref{new_def_projectively_presented},
 quantifying over  continuous functions is
 equivalent to quantifying over  reals.
 By Lemma \ref{lem.tRsprojectivelypres},
 $\exists \tilde{R}\,\varphi(\tilde{R})$ is $\Sigma^2_1$.
For any relation  $\tilde{R} \subseteq \baire \times {^\omega 2}$,
 let $N(\tilde{R}) \subseteq \mc{R} \times {^\omega 2}$
 be the relation
 $$N(\tilde{R})(x,y) \Leftrightarrow \tilde{R}(\eta(x),y).$$
Note that for any $\tilde{R}$,
 $\varphi(\tilde{R})$ holds if and only if $N(\tilde{R})$ witnesses the failure
 of $\mbox{LCU}(\mbb{P})$.

Suppose toward a contradiction that  there is some $p^*\in\mathcal{R}$ and some relation  $R' \subseteq
 \mc{R} \times {^\omega 2}$
 which witnesses the failure of $\mbox{LCU}(\mbb{P})$ below $p^*$.
Define $\tilde{R}'$ by
 $$\tilde{R}'(\tilde{x},y) \Leftrightarrow
  R'(\eta^{-1}(\tilde{x}), y).$$
Note that $R' = N(\tilde{R}')$,
 so  $\varphi(\tilde{R}')$ holds.
 By $\Sigma^2_1$-reflection,
 there is a Suslin, co-Suslin set
 $\tilde{R}$ such that $\varphi(\tilde{R})$.
Since $\tilde{R}$ is Suslin, co-Suslin it has
 a uniformization.
Now $R := N(\tilde{R})$ has a uniformization on $[p^*]$ as well.

By the Claim, there is some $q\le p^*$ for which $f\re [q]$ is continuous, contradicting our assumption that $R'$ witnesses the failure of  $\mbox{LCU}(\mbb{P})$.
Therefore,  $\mbox{LCU}(\mbb{P})$ holds. 
\end{proof}

This next definition differs from \cite{Henle/Mathias/Woodin85}
 in that we require the sets to be invariant.

\begin{definition}\label{def.EPP}
Given a coarsened poset $\mbb{P} = \langle X, \le, \le^* \rangle$,
 EP($\mbb{P}$) is the statement that given any $p\in X$ and well-ordered
 sequence $\langle C_\alpha \subseteq X :
 \alpha < \kappa \rangle$
 of sets that are  invariant  and R$^+$
 below  $p$,
 the intersection of the sequence is also
 invariant and R$^+$ below $p$.
\end{definition}

\begin{definition}\label{defn.strcoarse}
We say that  
$\langle \mc{R},  \le, \le^*,  r \rangle$ is a 
{\em coarsened topological Ramsey space} if 
 $\langle \mc{R},  \le,   r \rangle$ is an axiomatized topological Ramsey space and  the following hold:
 \begin{enumerate}
 \item[1)]
 $\le^*$
 is a $\sigma$-closed partial order;
 \item [2)]
 $\langle \mc{R},  \le, \le^*,  r \rangle$ is a coarsened partial order in the sense of Definition \ref{poset_def};
\item[3)]
   Whenever $p,q\in \mathcal{R}$ and there is an $a\in\mathcal{AR}$  satisfying $\emptyset\ne [a,q]\sse [a,p]$, then $q\le^* p$.
\end{enumerate}
\end{definition}

Note that if $\langle \mc{R},  \le, \le^*,  r \rangle$ is a 
 coarsened topological Ramsey space,
then  whenever $S\sse\mc{R}$ is invariant, 
($[p]\sse S\ra [p]^*\sse S)$.

\begin{proposition}\label{prop.ctblintersection}
\label{countable_ep}
Suppose  $\langle \mc{R},  \le, \le^*, r \rangle$
 is a  coarsened   topological Ramsey space.
Let $\langle C_n \subseteq \mc{R} : n < \omega \rangle$
 be a sequence of invariant R$^+$ sets.
Then $\bigcap_n C_n$ is invariant  R$^+$.
\end{proposition}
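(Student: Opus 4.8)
The plan is to treat invariance and the $R^+$ property separately. Invariance of $\bigcap_n C_n$ is immediate: if each $C_n$ is invariant and $q_1 =^* q_2$, then $q_1 \in \bigcap_n C_n$ iff $q_1 \in C_n$ for all $n$ iff $q_2 \in C_n$ for all $n$ iff $q_2 \in \bigcap_n C_n$. So the real content is showing that $\bigcap_n C_n$ is $R^+$, i.e.\ $R^+$ below every $p \in \mc{R}$. Fixing $p$ and an arbitrary $p^* \le p$, by Definition~\ref{defn.projpres} it suffices to produce some $q \le p^*$ with $[q] \subseteq \bigcap_n C_n$.

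The naive attempt would be to iterate the $R^+$ property of the $C_n$ to obtain a $\le$-decreasing chain $p^* \ge q_0 \ge q_1 \ge \cdots$ with $[q_n] \subseteq C_n$ and then take a $\le$-lower bound. This fails, and is the main obstacle: $\le$ need not be $\sigma$-closed, so the chain may have no $\le$-lower bound. The key idea for overcoming this is to pass from the $\le$-cubes $[q_n]$ to the $\le^*$-cubes $[q_n]^*$, exploiting that the coarsening $\le^*$ \emph{is} $\sigma$-closed (property 1) of Definition~\ref{defn.strcoarse}) and that invariance lets us lift cube-containment from $\le$ to $\le^*$.

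Concretely, I would build a $\le^*$-decreasing sequence $q_0 \ge^* q_1 \ge^* \cdots$ as follows. Since $C_0$ is $R^+$ below $p^*$, choose $q_0 \le p^*$ with $[q_0] \subseteq C_0$; because $C_0$ is invariant, the remark following Definition~\ref{defn.strcoarse} upgrades this to $[q_0]^* \subseteq C_0$. Given $q_n$, apply the $R^+$ property of $C_{n+1}$ below $q_n$ to get $q_{n+1} \le q_n$ with $[q_{n+1}] \subseteq C_{n+1}$, and invariance again yields $[q_{n+1}]^* \subseteq C_{n+1}$; note $q_{n+1} \le q_n$ forces $q_{n+1} \le^* q_n$. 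By $\sigma$-closure of $\le^*$ there is a $\le^*$-lower bound $q_\infty \le^* q_n$ for all $n$. The $\le^*$-cones are then nested, $[q_\infty]^* \subseteq [q_n]^*$ for every $n$, whence $[q_\infty]^* \subseteq \bigcap_n C_n$.

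Finally I would descend back into the $\le$-order. Since $q_\infty \le^* q_0 \le^* p^*$, property 2) of Definition~\ref{poset_def} supplies some $q \le p^*$ with $q =^* q_\infty$; then $[q]^* = [q_\infty]^* \subseteq \bigcap_n C_n$, and as $[q] \subseteq [q]^*$ (by property 1) of coarsened posets) we obtain $q \le p^*$ with $[q] \subseteq \bigcap_n C_n$, establishing $R^+$ below $p$. The only points requiring care are checking that each construction step stays $\le^*$-below $p^*$ so that the final application of property 2) is legitimate, and confirming the hypotheses of the remark after Definition~\ref{defn.strcoarse}, both of which are routine.
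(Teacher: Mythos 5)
Your proof is correct, and it takes a genuinely different route from the paper's. Your argument runs entirely inside the coarsened-poset structure: you use the $\sigma$-closure of $\le^*$ (property 1) of Definition \ref{defn.strcoarse}) to get a $\le^*$-lower bound $q_\infty$ of the chain $q_0 \ge q_1 \ge \cdots$, invariance to promote each $[q_n] \sse C_n$ to $[q_n]^* \sse C_n$, and property 2) of Definition \ref{poset_def} to descend from $q_\infty \le^* p^*$ back to some $q \le p^*$ with $[q] \sse [q_\infty]^* \sse \bigcap_n C_n$. The paper instead runs a fusion argument that never invokes $\sigma$-closure: it forms the Ellentuck-open dense sets $D_n = \{q : [q] \sse C_n\}$, applies the Abstract Ellentuck Theorem \ref{thm.AET} at each stage to choose $p_{n+1} \in [r_n(p_n),p_n]$ with $[r_n(p_n),p_{n+1}] \sse D_{n+1}$, uses closedness of $\mc{R}$ in $\mathcal{AR}^{\mathbb{N}}$ to form a diagonal limit $q$ with $r_n(q) = r_n(p_n)$ for all $n$, and then applies property 3) of Definition \ref{defn.strcoarse} to conclude $q \le^* p_n$ for every $n$, finishing via invariance just as you do. What your route buys: it is shorter and strictly more general, since it proves the statement for any coarsened poset in the sense of Definition \ref{poset_def} whose coarsening is $\sigma$-closed, with no topological Ramsey space hypotheses at all. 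What the paper's route buys: it does not use hypothesis 1) of Definition \ref{defn.strcoarse} --- the fusion manufactures the needed $\le^*$-lower bound directly from the topological structure (which is essentially how $\sigma$-closure of $\le^*$ is verified in the concrete examples), and the technique of building a limit with prescribed finite approximations is reused elsewhere in the paper, e.g.\ in the proof of Proposition \ref{Mathias_prop}. Both arguments make $\omega$-many successive choices, so they consume the same amount of dependent choice.
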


\begin{proof}
For each $n < \omega$,
 let $D_n := \{ q \in \mc{R} :[q] \subseteq C_n \}$.
Note that each $D_n$ is dense in 
$\lgl \mathcal{R},\le\rgl$ (since $C_n$ is R$^+$)
and is
open in the Ellentuck topology.
Furthermore,  each $D_n$ is R$^+$ and invariant.
Fix $p \in \mc{R}$.
It suffices to find some $q \le p$
 such that $[q] \subseteq \bigcap_n C_n$.

Since $D_0$ is dense, take some $p_0 \le p$  in $D_0$.
Suppose now that  $n<\om$ and  $p_n$ has been chosen.
Since $D_{n+1}$ is open in the Ellentuck topology, by Theorem \ref{thm.AET}
there is some $p_{n+1}\in [r_n(p_n),p_n]$ such that either 
$[r_n(p_n),p_{n+1}]\sse  D_{n+1}$ or else
$[r_n(p_n),p_{n+1}]\cap D_{n+1}=\emptyset$.
The second option cannot happen since $D_{n+1}$ is $R^+$.
Hence, $[r_n(p_n),p_{n+1}]\sse  D_{n+1}$; and in particular,
 $p_{n+1}\in [r_n(p_n),p_n]\cap D_{n+1}$.

 At the end of this process, we have conditions $p_n$ such that $r_0(p_0)\sqsubset r_1(p_1)\sqsubset r_2(p_2)\sqsubset \cdots$.
Since the space $\mathcal{R}$ is closed,
there is a
$q \in \mc{R}$
such that for each $n<\om$, $r_n(q)=r_n(p_n)$.
Then for each $n$, 
$[r_n(q),q]\sse [r_n(q),p_n]$;
so by (3) of Definition \ref{defn.strcoarse},
we have $q \le^* p_n$.
Since each $D_n$ is closed downwards
in $\lgl \mathcal{R},\le\rgl$,
 each $C_n$ is invariant, and  $\langle \mc{R},  \le, \le^* \rangle$ is a coarsened poset, 
 it follows that 
 $[q]^*\sse  \bigcap_{n<\om} C_n$.
 Hence $\bigcap_{n<\om} C_n$ is $R^+$;
 and the intersection of invariant sets is again invariant. 
\end{proof}

\begin{cor}
\label{countable_union_of_rneg_is_rneg}
Let $\langle \mc{R}, \le,\le^*, r \rangle$
 be a
  coarsened closed axiomatized topological Ramsey space.
Let $\langle C_n \subseteq \mc{R} : n < \omega \rangle$
 be a sequence of invariant R$^-$ sets.
Then $\bigcup_n C_n$ is invariant R$^-$.
\end{cor}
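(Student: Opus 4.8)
The plan is to obtain this corollary as the exact dual of Proposition~\ref{prop.ctblintersection} under complementation. The key observation is a duality between the R$^-$ and R$^+$ properties: for any $S \subseteq \mc{R}$ and any $p \in \mc{R}$, the set $S$ is R$^-$ below $p$ if and only if $\mc{R} \setminus S$ is R$^+$ below $p$. This is immediate from unwinding Definition~\ref{defn.projpres}, since for $q \le p$ the condition $[q] \cap S = \emptyset$ is literally the same as $[q] \subseteq \mc{R} \setminus S$; hence $S^-_p = (\mc{R} \setminus S)^+_p$ as subsets of $[p]$, and $\le$-density of one is $\le$-density of the other. So I would begin by recording this equivalence explicitly.

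Next I would note that complementation preserves invariance: if $p =^* q$ then $p \in \mc{R} \setminus S \Leftrightarrow p \notin S \Leftrightarrow q \notin S \Leftrightarrow q \in \mc{R} \setminus S$, so $\mc{R} \setminus S$ is invariant whenever $S$ is. Applying both observations to the hypotheses, each $\mc{R} \setminus C_n$ is invariant and R$^+$. I would then invoke the de Morgan identity
$$
\mc{R} \setminus \bigcup_{n<\om} C_n = \bigcap_{n<\om} (\mc{R} \setminus C_n).
$$

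Finally, Proposition~\ref{prop.ctblintersection} applied to the sequence $\langle \mc{R} \setminus C_n : n < \om \rangle$ of invariant R$^+$ sets yields that $\bigcap_{n} (\mc{R} \setminus C_n)$ is invariant and R$^+$. By the de Morgan identity this says that $\mc{R} \setminus \bigcup_n C_n$ is invariant and R$^+$, and one last application of the duality (and of the invariance-under-complement observation) gives that $\bigcup_n C_n$ is invariant and R$^-$, as desired. I do not anticipate a genuine obstacle here: the content is entirely in Proposition~\ref{prop.ctblintersection}, and the remaining work is the purely formal verification of the R$^-$/R$^+$ duality and the stability of invariance under complements; the only point warranting a careful sentence is that the equality $S^-_p = (\mc{R}\setminus S)^+_p$ holds on the nose (not merely up to the coarsening $=^*$), so that $\le$-density transfers verbatim.
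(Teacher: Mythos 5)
Your proposal is correct and is exactly the paper's argument: the paper's proof consists of the single line ``Apply Proposition~\ref{prop.ctblintersection} to the complements of the $C_n$'s,'' and your write-up simply makes explicit the routine verifications (the identity $S^-_p = (\mc{R}\setminus S)^+_p$, preservation of invariance under complementation, and de~Morgan) that the paper leaves implicit.
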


\begin{proof}
Apply Proposition \ref{prop.ctblintersection} to the complements
 of the $C_n$'s.
\end{proof}

The proof of the next proposition will use the Kunen-Martin Theorem, which states that given an infinite cardinal $\kappa$ and a wellfounded relation $\prec$ on $\baire$ which is $\kappa$-Souslin as a subset of $\baire\times\baire$,
then $\rho(\prec)<\kappa^+$.
(See Theorem 25.43 on Page 503 in \cite{JechBK}.)

\begin{proposition}\label{prop.EP}
Let $\langle \mc{R}, \le,\le^*, r \rangle$
 be a 
  coarsened  topological Ramsey space.
Assume  $\mbox{LCU}(\langle \mc{R}, \le \rangle)$, countable choice for sets of reals, and 
 every  subset of $ \mc{R}$ is Ramsey.
Then EP$(\mbb{P})$ holds.
\end{proposition}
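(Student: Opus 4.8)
The plan is to separate the two conclusions. Invariance of $\bigcap_{\alpha<\kappa} C_\alpha$ is immediate, since an intersection of invariant sets is invariant, so the entire content is to show the intersection is R$^+$ below $p$. My first move is to pass to kernels. For each $\alpha$ set $D_\alpha = \{q \le p : [q] \subseteq C_\alpha\}$. Because $C_\alpha$ is R$^+$ below $p$, each $D_\alpha$ is $\le$-dense below $p$; because $C_\alpha$ is invariant, the remark after Definition \ref{defn.strcoarse} gives $[q]\subseteq C_\alpha \Rightarrow [q]^*\subseteq C_\alpha$, so each $D_\alpha$ is downward closed under both $\le$ and $\le^*$ and is open in the Ellentuck topology. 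Unwinding definitions, $[q]\subseteq\bigcap_\alpha C_\alpha$ iff $q\in\bigcap_\alpha D_\alpha$, so $\bigcap_\alpha C_\alpha$ is R$^+$ below $p$ exactly when $\bigcap_\alpha D_\alpha$ is $\le$-dense below $p$. Thus it suffices to prove this distributivity statement for the kernels.

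I would then induct on the length $\kappa$. The successor step $\kappa=\lambda+1$ and the case $\cf(\kappa)=\om$ both reduce to countable intersections: in the former $\bigcap_{\alpha\le\lambda}C_\alpha=(\bigcap_{\alpha<\lambda}C_\alpha)\cap C_\lambda$, and in the latter one writes $\bigcap_{\alpha<\kappa}C_\alpha=\bigcap_n\bigcap_{\alpha<\kappa_n}C_\alpha$ for an increasing sequence $\langle\kappa_n:n<\om\rangle$ cofinal in $\kappa$. In each case the inductive hypothesis makes the relevant pieces invariant and R$^+$, and Proposition \ref{prop.ctblintersection} closes the step. So the whole difficulty is concentrated in the case where $\kappa$ is a limit of uncountable cofinality.

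For that case I argue by contradiction. Suppose $\bigcap_\alpha D_\alpha$ is not $\le$-dense below $p$, and fix $r'\le p$ with $[r']\cap\bigcap_\alpha D_\alpha=\emptyset$. Define $\rho:[r']\to\kappa$ by letting $\rho(q)$ be the least $\alpha$ with $q\notin D_\alpha$. Two facts drive the argument. First, $\rho$ is $\le$-monotone: if $q'\le q$ then $q'\in D_\gamma$ for every $\gamma<\rho(q)$ by downward closure, so $\rho(q')\ge\rho(q)$. Second, $\rho$ admits a strict descent: given $q$, density of $D_{\rho(q)}$ below $q$ yields $q''\le q$ with $q''\in D_{\rho(q)}$, and then $\rho(q'')>\rho(q)$. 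It now suffices to show $\lambda:=\sup_{q\le r'}\rho(q)<\kappa$, for then $[r']\cap\bigcap_{\alpha\le\lambda}D_\alpha=\emptyset$ with $\lambda+1<\kappa$, contradicting the inductive hypothesis applied to the shorter sequence $\langle C_\alpha:\alpha\le\lambda\rangle$.

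To bound $\lambda$ I would feed the descent into $\mbox{LCU}$, transporting everything to $\baire$ via the continuous bijection $\eta$ of Lemma \ref{lem.tRsprojectivelypres}. The strict part of the prewellordering $q'\preceq q\Leftrightarrow\rho(q')\le\rho(q)$ is wellfounded, since a strictly $\preceq$-descending sequence forces the ordinals $\rho$ to strictly decrease and so must be finite; its rank equals the order type of the range of $\rho$, whose supremum is $\lambda$. Written out directly, membership in $\preceq$ carries an unbounded ordinal quantifier ``$\exists\alpha<\kappa$'', which exhibits it only as $\kappa$-Souslin and yields the vacuous bound $\lambda<\kappa^+$. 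The point of $\mbox{LCU}$ (in the form $\mbox{LCU}^+$ of Proposition \ref{lcu_implies_baireclassone}), together with countable choice for reals, is to uniformize the one-step descent by a continuous selector below some $q_*\le r'$, thereby replacing those ordinal witnesses by real witnesses and reducing the relation to a $\mathbf{\Sigma}^1_1$ one; Kunen-Martin then caps its rank by $\om_1$, and since $\cf(\kappa)>\om$ an order type below $\om_1$ cannot be cofinal in $\kappa$, giving $\lambda<\kappa$, the desired contradiction. The main obstacle is exactly this reduction of the Souslin number: verifying that the continuous uniformization really does eliminate the quantifier over $\alpha<\kappa$ in the definition of $\preceq$, so the wellfounded relation becomes analytic while its rank still controls $\sup\rho$. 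Managing the interplay between $\le$ and $\le^*$ (the descent is taken in $\le$, but monotonicity and downward closure of the $D_\alpha$ hold for both by the coarsened-space axiom) is a secondary bookkeeping point, and the kernel reformulation, invariance, and the two easy cases of the induction are routine.
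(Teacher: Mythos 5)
Your setup runs closely parallel to the paper's proof: kernels $D_\alpha$, a rank-to-first-failure function $\rho$ (the paper's $\chi$), the one-step descent from density, $\mbox{LCU}^+$ (via Proposition \ref{lcu_implies_baireclassone}) to get a continuous selector below some $q_*$, analyticity of its graph via part 3) of Definition \ref{new_def_projectively_presented}, and Kunen--Martin. (Your per-$\alpha$ kernels even make the descent available without the inductive hypothesis, a small simplification over the paper's cumulative $D_\alpha$.) But the step you yourself flag as ``the main obstacle'' is a genuine gap, and it cannot be closed in the form you propose. The continuous selector $f$ hands you exactly one analytic relation: its graph $S(x',y') \Leftrightarrow \eta^{-1}(x')\le q_* \mbox{ and } f(\eta^{-1}(x'))=y'$. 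Kunen--Martin bounds the rank of $S$ below $\omega_1$, but the rank of $S$ does not control $\sup\rho$: along one application of $f$ the value of $\rho$ increases by at least $1$ but possibly by an arbitrarily large amount, so the $S$-rank of a point is only a \emph{lower} bound for its $\rho$-value, and countable $S$-rank is perfectly compatible with $\rho$ being cofinal in $\kappa$ on $[q_*]$. Conversely, the relation whose rank really does equal the order type of the range of $\rho$ --- your prewellordering $\preceq$ --- is not made analytic by $f$: the selector supplies one witness below each point, whereas $\preceq$ requires comparing the $\rho$-values of arbitrary pairs, and no definition of that comparison from $f$ and the projective data is available. So the bound $\lambda<\kappa$ is never established, and the induction has nothing to feed on.

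The paper's proof avoids this entirely by never attempting to bound $\sup\chi$. It instead uses the rank function $\rho'$ of the analytic graph $S$ itself: Kunen--Martin gives $\gamma<\omega_1$ bounding all $S$-ranks; the level sets $E_\alpha=\{q\le\bar p:\rho'(\eta(q))=\alpha\}$, $\alpha<\gamma$, partition $[\bar p]$ into countably many pieces; each $E_\alpha$ is R$^-$ because applying the selector strictly increases $S$-rank and every subset of $\mc{R}$ is Ramsey; and then Corollary \ref{countable_union_of_rneg_is_rneg} (countable unions of invariant R$^-$ sets are R$^-$, coming from Proposition \ref{prop.ctblintersection}) forces $[\bar p]$ itself to be R$^-$, which is absurd since $[\bar p]$ contains the cube $[\bar p]$. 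If you replace your ``bound $\lambda$ and apply the inductive hypothesis'' step with this rank-partition contradiction, your argument goes through --- and in fact your version of the descent then makes the transfinite induction on the length of the sequence (beyond the countable case already handled by Proposition \ref{prop.ctblintersection}) unnecessary.
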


\begin{proof}
Towards a contradiction,
 assume there is a sequence  of length $\theta$ 
 which witnesses the failure of EP$(\mbb{P})$,
 but EP$(\mbb{P})$ holds for all sequences
 strictly shorter than $\theta$.
By Proposition~\ref{countable_ep},
 it must be that $\theta$ is an uncountable  cardinal, and by minimality of $\theta$ for the failure of EP$(\mbb{P})$, $\theta$ must be regular.
Fix $p \in X$ and
 a sequence $\langle C_\alpha : \alpha < \theta \rangle$
 of invariant subsets of $\mathcal{R}$
 that are R$^+$ below $p$,
 such that $\bigcap_{\alpha<\theta} C_\alpha$ is not 
 R$^+$  below $p$.

For each $\alpha < \theta$,
 let $$D_\alpha := \{
 p \in X :
 (\forall \beta < \alpha)\,
 [p] \subseteq C_\beta \}.$$
Each $D_\alpha$
 is downward $\le$-closed and
 is a subset of $\bigcap_{\beta < \alpha} C_\beta$.
Note that each $D_\alpha$ is invariant,
 because if $[p] \subseteq C_\beta$,
 then $[p]^* \subseteq C_\beta$
 (by the invariance of $C_\beta$ and  since $\le^*$  coarsens $\le$)
 and so any $p' =^* p$ satisfies
 $[p'] \subseteq C_\beta$.
Next, we claim that each $D_\alpha$ is R$^+$ below $p$.
This is immediate from the hypothesis that
 $\bigcap_{\beta < \alpha} C_\beta$ is R$^+$ below $p$.
The sequence
 $\langle D_\alpha : \alpha < \theta \rangle$
 is decreasing.
Let $D_\theta = \bigcap_{\alpha < \theta} D_\alpha$.

Since $\bigcap_{\alpha < \theta} C_\alpha$ is not
 $R^+$ below $p$,
 but is Ramsey (since we are assuming every subset of $\mc{R}$ is Ramsey),
 we may fix a $p' \le p$ such that
 $\bigcap_{\alpha < \theta} C_\alpha$
 and therefore $D_\theta$ is empty below $p'$.
Now define the function
 $\chi : [p'] \to \theta$ as follows:
 $$\chi(q) = \min\{ \alpha < \theta :
 q \not\in D_\alpha \}.$$

Let the continuous bijection $\eta : \mc{R} \to \baire$
 come from  Proposition \ref{lem.tRsprojectivelypres}.
Let $W \subseteq \mc{R} \times \baire$
 be the relation 
$$
 W(x,y')\ \Longleftrightarrow \ \eta^{-1}(y') \le x \mathrm{\  and\ }
 \chi(\eta^{-1}(y')) > \chi(x).
$$
Since $\mbox{LCU}(\langle \mc{R}, \le \rangle)$
 (and therefore $\mbox{LCU}^+(\langle \mc{R}, \le \rangle)$ holds,
 fix $\bar{p} \le p'$ and a
 continuous
 $f : [\bar{p}] \to \baire$
 such that
 $$(\forall r \le \bar{p})\, W(r,f(r)).$$

Now define $S \subseteq \baire \times \baire$ by
$$
 S(x',y')
 \ \Longleftrightarrow \ 
 \eta^{-1}(x') \le \bar{p}\mathrm{\  and\ }
 f(\eta^{-1}(x')) = y'.
$$
Note that 
 $$S(x',y') \Longrightarrow
 \eta^{-1}(x') \ge \eta^{-1}(y') \mbox{ and }
 \chi(\eta^{-1}(x')) < \chi(\eta^{-1}(y')).$$
So, $S$ is  well-founded, meaning there is no sequence
 $\langle x_0', x_1', ... \rangle$ of elements of $\baire$
 such that
 $$... \wedge S(x'_2,x'_1) \wedge S(x'_1,x'_0).$$
Let $D = \{ x' \in \baire :\eta^{-1}(x') \le \bar{p} \}$.
Note that for each $x' \in D$,
 $(\exists y' \in D)\, S(x',y')$.
Since $S$ is a well-founded relation, we may assign an $S$-rank
 $\rho'(x')$ to each $x' \in D$.
Specifically, for $y' \in D$,
 $$\rho'(y') := \sup \{
 \rho(x') + 1 : x' \in D \mbox{ and } S(x',y') \}.$$
For $x' \in \baire$ not in $D$, define $\rho'(x') := -1$.
Since $f$ is continuous,
 by part 3) of
 the Definition~\ref{new_def_projectively_presented}
 of being projectively presented,
 $S$ is a $\mathbf{\Sigma}^1_1$ relation.
Since $S$ is $\mathbf{\Sigma}^1_1$, by the Kunen-Martin theorem,
 fix an ordinal $\gamma < \omega_1$ such that
 $$(\forall x' \in D)\, \rho'(x') < \gamma.$$

For each $0 \le \alpha < \gamma$, let
 $$E'_\alpha := \{ x' \in \baire : \rho'(x) = \alpha \}.$$
Let $E_\alpha := \{ \eta^{-1}(x') : x' \in E'_\alpha \}$.
Note that the $E_\alpha$'s form a partition
 of $[\bar{p}]$,
 because the $E'_\alpha$'s form a partition of $D$.
Here is the second place where we use the assumption that every
 subset of $\mc{R}$ is Ramsey.
We will show that each $E_\alpha$ is R$^-$.
Fix $0 \le \alpha < \gamma$.
The set $E_\alpha$ is Ramsey, so to show it is R$^-$,
 it suffices to show
 $$(\forall q \in E_\alpha)(\exists r \le q)\,
 q' \not\in E_\alpha.$$
This is immediate, because
 given $q \in E_\alpha$,
 there is an $r \le q$ such that
 $S(\eta(q),\eta(r))$.
Thus, by definition of $\rho'$,
 $$\alpha = \rho'(\eta(q)) < \rho'(\eta(r)).$$
Hence, $r \not\in E_\alpha$.

We now have that the $E_\alpha$'s form a partition
 of $[\bar{p}]$, and that they
 are each R$^-$.
By Corollary~\ref{countable_union_of_rneg_is_rneg},
 the countable union of all the $E_\alpha$'s
 is $R^-$.
Hence, $[\bar{p}]$ is R$^-$,
 which is impossible.
\end{proof}

\begin{proposition}\label{merged_prop}
Let $\mbb{P} = \langle X, \le, \le^* \rangle$
 be a coarsened poset such that
 $\mbox{EP}(\mbb{P})$ holds.
Fix $p \in X$.
Let $\langle C_\alpha \subseteq X : \alpha < \kappa \rangle$
 be a sequence of subsets of $X$
 that are Ramsey and invariant below $p$.
Then there is some $q \le p$ such that for each $\alpha < \kappa$,
 either $[q]^* \subseteq C_\alpha$
 or $[q]^* \cap C_\alpha = \emptyset$.
\end{proposition}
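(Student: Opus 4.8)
The plan is to reduce the claim about an arbitrary well-ordered sequence $\langle C_\alpha : \alpha < \kappa \rangle$ to the single-set case, and then invoke $\mbox{EP}(\mbb{P})$ to handle the intersection over all $\alpha$ simultaneously. The key observation is that being ``Ramsey and invariant below $p$'' splits each $C_\alpha$ into an R$^+$ part and an R$^-$ part via the dense sets $(C_\alpha)^+_p$ and $(C_\alpha)^-_p$, and the definition of Ramsey says their union is dense below $p$. So first I would, for each $\alpha$, recast the two alternatives ($[q]^* \subseteq C_\alpha$ versus $[q]^* \cap C_\alpha = \emptyset$) in terms of membership in $(C_\alpha)^+_q$ and $(C_\alpha)^-_q$, using the remark (just after Definition~\ref{defn.projpres}) that for invariant $S$, $[q] \subseteq S$ upgrades to $[q]^* \subseteq S$ and $[q]\cap S=\emptyset$ upgrades to $[q]^*\cap S=\emptyset$.

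\textbf{Main construction.} First I would replace each $C_\alpha$ by the set $C_\alpha^\circ := \{q \le p : [q]\subseteq C_\alpha \text{ or } [q]\cap C_\alpha=\emptyset\}$, i.e.\ the union $(C_\alpha)^+_p \cup (C_\alpha)^-_p$; since $C_\alpha$ is Ramsey below $p$, each $C_\alpha^\circ$ is $\le$-dense below $p$, and since $C_\alpha$ is invariant, $C_\alpha^\circ$ is invariant as well. The natural move is to show each $C_\alpha^\circ$ is R$^+$ below $p$: given $r \le p$, by density of $C_\alpha^\circ$ there is $s \le r$ deciding $C_\alpha$, and then every $t \le s$ also decides $C_\alpha$ the same way, so $[s] \subseteq C_\alpha^\circ$, witnessing that $(C_\alpha^\circ)^+_p$ is dense below $p$. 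Now apply $\mbox{EP}(\mbb{P})$ to the sequence $\langle C_\alpha^\circ : \alpha < \kappa \rangle$: the intersection $\bigcap_\alpha C_\alpha^\circ$ is invariant and R$^+$ below $p$, so there is some $q \le p$ with $[q] \subseteq \bigcap_\alpha C_\alpha^\circ$. This $q$ lies in $C_\alpha^\circ$ for every $\alpha$, meaning $[q]$ decides each $C_\alpha$; by invariance this upgrades to $[q]^* \subseteq C_\alpha$ or $[q]^* \cap C_\alpha = \emptyset$, as required.

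\textbf{The subtle point} is verifying that $q \in C_\alpha^\circ$ genuinely yields a uniform decision across all of $[q]$ rather than just at the single point $q$. Membership $q \in C_\alpha^\circ$ literally says $[q] \subseteq C_\alpha$ or $[q] \cap C_\alpha = \emptyset$, which is exactly the decision I want, so the upgrade via invariance is immediate once I have $[q] \subseteq C_\alpha^\circ$; I must be careful that $[q]\subseteq C_\alpha^\circ$ together with $q\in C_\alpha^\circ$ is what $\mbox{EP}$ delivers, and the former is not needed—only $q\in C_\alpha^\circ$ for all $\alpha$ matters. I expect the main obstacle to be bookkeeping the two directions of invariance cleanly (that $C_\alpha^\circ$ is itself invariant, so that $\mbox{EP}(\mbb{P})$ is applicable, and that a positive decision on $[q]$ transfers to $[q]^*$), but both follow directly from the remark after Definition~\ref{defn.projpres} and from $\le^*$ coarsening $\le$. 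No determinacy or partition hypotheses enter here; the proposition is a purely combinatorial consequence of $\mbox{EP}(\mbb{P})$ applied to the ``decided'' refinements of the given sets.
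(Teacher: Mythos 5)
Your proposal is correct and follows essentially the same route as the paper: your $C_\alpha^\circ$ is exactly the paper's set $D_\alpha = \{q \le p : [q] \subseteq C_\alpha \mbox{ or } [q] \cap C_\alpha = \emptyset\}$, which the paper likewise shows to be invariant and R$^+$ below $p$ (using Ramseyness and invariance of $C_\alpha$) before applying $\mbox{EP}(\mbb{P})$ to the sequence $\langle D_\alpha : \alpha < \kappa \rangle$ and upgrading the decision from $[q]$ to $[q]^*$ by invariance. The only difference is that you spell out the density argument for R$^+$ and the final upgrade in slightly more detail than the paper does.
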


\begin{proof}
Fix $\alpha < \kappa$.
Let $D_\alpha \subseteq X$ be the set
 $$D_\alpha = \{ q \le p : [q] \subseteq C_\alpha
 \mbox{ or } [q] \cap C_\alpha = \emptyset \}.$$
Because $C_\alpha$ is Ramsey,
 the set $D_\alpha$ is R$^+$.
Note also that because $C_\alpha$ is invariant, we have
 $$D_\alpha = \{ q \le p : [q]^* \subseteq C_\alpha
 \mbox{ or } [q]^* \cap C_\alpha = \emptyset \}.$$
This also establishes that $D_\alpha$ is invariant.

All we need is a $q$ that is in the intersection
 of the $D_\alpha$'s.
This follows from $\mbox{EP}(\mbb{P})$,
 because each $D_\alpha$ is invariant and R$^+$.
\end{proof}

\begin{proposition}
\label{part_invariantimpliesramsey}
Let $\mbb{P} = \langle X, \le, \le^* \rangle$
 be a  coarsened poset and assume
 $\mbox{EP}(\mbb{P})$.
Assume every $S \subseteq X$ is Ramsey.
Let $\kappa$ be a cardinal,
 $p \in X$,
 and $\Phi : [p] \to [\kappa]^\kappa$
 be an invariant function.
Then  there is a 
 $p' \le p$ such that $ \Phi \restriction [p']$
 is constant.
\end{proposition}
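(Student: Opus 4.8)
The plan is to reduce the constancy of $\Phi$ to a coordinatewise dichotomy and then invoke Proposition~\ref{merged_prop}, whose appeal to $\mbox{EP}(\mbb{P})$ is exactly what lets all $\kappa$ coordinates be decided at once. Since $\Phi$ takes values in $[\kappa]^\kappa \subseteq \mathcal{P}(\kappa)$, the value $\Phi(r)$ is completely determined by the answers to the questions ``is $\xi \in \Phi(r)$?'' ranging over $\xi < \kappa$. Accordingly, for each $\xi < \kappa$ I would set
\[
C_\xi = \{ r \in X : r \le^* p \ \text{and}\ (\exists r' \le p)(r' =^* r\ \text{and}\ \xi \in \Phi(r')) \},
\]
so that $C_\xi \cap [p] = \{ r \le p : \xi \in \Phi(r)\}$. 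The extension to the $=^*$-saturation of $[p]$ (legitimate by clause 2) of Definition~\ref{poset_def} together with the invariance of $\Phi$) is included precisely so that each $C_\xi$ is genuinely invariant on all of $X$, which is what makes the $[q]^*$-formulation in the conclusion of Proposition~\ref{merged_prop} meaningful; using the unsaturated set $\{r \le p : \xi \in \Phi(r)\}$ alone would leave a gap at the step where one passes from $[q]$ to $[q]^*$.

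Next I would verify that the sequence $\langle C_\xi : \xi < \kappa\rangle$ meets the hypotheses of Proposition~\ref{merged_prop}. Invariance is immediate from the construction: if $r_1 =^* r_2$, then $r_1 \le^* p \Leftrightarrow r_2 \le^* p$ and the two have the same $=^*$-representatives below $p$, so $r_1 \in C_\xi \Leftrightarrow r_2 \in C_\xi$; restricting to $[p]$ and using the invariance of $\Phi$ gives the stated identity. Ramseyness is even cheaper, since by hypothesis \emph{every} subset of $X$ is Ramsey. Hence Proposition~\ref{merged_prop} applies and furnishes some $q \le p$ such that, for every $\xi < \kappa$, either $[q]^* \subseteq C_\xi$ or $[q]^* \cap C_\xi = \emptyset$.

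Finally I would read off constancy. Because $[q] \subseteq [q]^*$, the dichotomy restricts to $[q]$: for each $\xi$, either every $r \le q$ satisfies $\xi \in \Phi(r)$, or every $r \le q$ satisfies $\xi \notin \Phi(r)$, where I use that on $[p]$ membership in $C_\xi$ is exactly the predicate $\xi \in \Phi(\cdot)$. Setting $A = \{ \xi < \kappa : [q] \subseteq C_\xi \}$, it follows that $\Phi(r) = A$ for all $r \le q$; in particular $A = \Phi(q) \in [\kappa]^\kappa$. Taking $p' = q$ then completes the argument.

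The genuinely substantive content—passing a well-ordered sequence of length $\kappa$ through the homogenization rather than merely a countable one—has already been absorbed into Proposition~\ref{merged_prop}, and through it into $\mbox{EP}(\mbb{P})$. So I do not expect a serious obstacle here: the only real points of care are the bookkeeping of the $=^*$-saturation (ensuring $C_\xi$ is truly invariant, hence compatible with the $[q]^*$ conclusion of Proposition~\ref{merged_prop}) and checking that this saturation does not disturb the identity $C_\xi \cap [p] = \{r \le p : \xi \in \Phi(r)\}$ on which the final reading-off depends.
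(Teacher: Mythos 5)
Your proof is correct and follows essentially the same route as the paper's: the paper likewise defines the coordinate sets $C_\alpha := \{ q \le p : \alpha \in \Phi(q) \}$, notes they are Ramsey (automatic here) and invariant below $p$, applies Proposition~\ref{merged_prop}, and reads off constancy of $\Phi$ on $[q]$. The only difference is that the paper works with the unsaturated sets---invariance below $p$ being all that Proposition~\ref{merged_prop}'s hypothesis formally requires---whereas your $=^*$-saturation is a harmless and arguably more careful variant that makes the $[q]^*$ clause in that proposition's conclusion literally meaningful for sets contained in $[p]$.
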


\begin{proof}
For each $\alpha < \kappa$, put
 $C_\alpha := \{ q \le p : \alpha \in \Phi(q) \}$.
Each $C_\alpha$ is Ramsey and invariant below $p$.
By Proposition~\ref{merged_prop},
 fix a $q \le p$
 such that for each $\alpha < \kappa$,
 either $[q]^* \subseteq C_\alpha$ or
 $[q]^* \cap C_\alpha = \emptyset$.
It suffices to show that for each $r \le q$
 that $(\forall \alpha < \kappa)\,
 \alpha \in \Phi(q) \Leftrightarrow \alpha \in \Phi(r)$.
Fix such $\alpha$ and $r$.
We have
 $$\alpha \in \Phi(q) \Rightarrow
 q \in C_\alpha \Rightarrow
 [q]^* \subseteq C_\alpha \Rightarrow
 r \in C_\alpha \Rightarrow \alpha \in \Phi(r)$$
 and
 $$\alpha \not\in \Phi(q) \Rightarrow
 q \not\in C_\alpha \Rightarrow
 [q]^* \cap C_\alpha = \emptyset \Rightarrow
 r \not\in C_\alpha \Rightarrow
 \alpha \not\in \Phi(r).$$
\end{proof}

\begin{proposition}
\label{prop_ctbleqclass}
Suppose $\kappa \rightarrow (\kappa)^\lambda_\mu$,
 where $\kappa, \lambda, \mu$ are non-zero ordinals such that
 $\lambda = \omega \lambda \le \kappa$ and
 $2 \le \mu < \kappa$.
Let $\mbb{P} = \langle X, \le, \le^* \rangle$
 be a coarsened poset with the property that 
each $=^*$ equivalence class is countable, and assume $\mbox{LU}(\mbb{P})$.
Assume there is a surjection $\psi$
 from ${^\omega 2}$ onto $[\kappa]^\kappa$.
Let $\langle \pi_p : p \in X \rangle$
 be a collection of functions
 $\pi_p : [\kappa]^\lambda \to \mu$.
Then below any $p \in X$
 there exists $p^* \le p$
 and an invariant function
 $\Phi : [p^*] \to [\kappa]^\kappa$
 such that $(\forall q \in [p^*])\, \Phi(q)$
 is homogeneous for $\pi_q$.
\end{proposition}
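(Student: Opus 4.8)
The plan is to isolate the combinatorial content in a closure property of the partition relation, push it through $\mbox{LU}(\mbb{P})$ via the surjection $\psi$, and then use the countability of the $=^*$-classes to obtain invariance.

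\emph{The main obstacle} I expect to be the following closure claim: if $\kappa\to(\kappa)^\lambda_\mu$ and $\lambda=\omega\lambda$, then every countable family $\langle c_n:n<\omega\rangle$ of colorings $c_n:[\kappa]^\lambda\to\mu$ has a common homogeneous set $K\in[\kappa]^\kappa$, i.e. each $c_n\restriction[K]^\lambda$ is constant. The idea is to use $\lambda=\omega\lambda$ to split the index set $[0,\lambda)$ as $\bigsqcup_{n<\omega}P_n$, where $P_n=\{\omega\cdot\eta+n:\eta<\lambda\}$ has order type $\lambda$; for $X\in[\kappa]^\lambda$ with increasing enumeration $\langle x_\xi:\xi<\lambda\rangle$ this splits $X$ into $\omega$ subsets $X_n=\{x_\xi:\xi\in P_n\}$, each of order type $\lambda$. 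Interleaving the $c_n$ along this decomposition produces a single $\mu$-coloring $d$ of $[\kappa]^\lambda$; one application of $\kappa\to(\kappa)^\lambda_\mu$ yields a $d$-homogeneous set, which is then thinned to a $K\in[\kappa]^\kappa$ simultaneously homogeneous for all $c_n$. Verifying that $d$-homogeneity really transfers to each individual $c_n$ on a set still of order type $\kappa$ is the delicate computation here.

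Granting the claim, I would define an \emph{invariant} relation $R\subseteq X\times{}^{\omega}2$ by letting $R(x,y)$ hold iff $\psi(y)$ is simultaneously homogeneous for every coloring in $F_x:=\{\pi_{x'}:x'=^*x\}$. Since $F_x$ depends only on the $=^*$-class of $x$, the relation $R$ is invariant in its first coordinate. To verify the hypothesis of $\mbox{LU}(\mbb{P})$ below $p$, fix $x\le p$: the class of $x$ is countable, so $F_x$ is a countable family of $\mu$-colorings of $[\kappa]^\lambda$; by the claim it has a common homogeneous $K\in[\kappa]^\kappa$, and since $\psi$ maps onto $[\kappa]^\kappa$ there is $y$ with $\psi(y)=K$, giving $R(x,y)$. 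Thus $(\forall x\le p)(\exists y)\,R(x,y)$, and $\mbox{LU}(\mbb{P})$ supplies $q_0\le p$ and $f:[q_0]\to{}^{\omega}2$ with $R(r,f(r))$ for all $r\le q_0$. Setting $p^*:=q_0$ and $\Phi:=\psi\circ f$, the definition of $R$ guarantees that for each $q\le p^*$ and each $q'=^*q$ the set $\Phi(q)$ is homogeneous for $\pi_{q'}$; in particular $\Phi(q)$ is homogeneous for $\pi_q$, as required.

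Finally, invariance of $\Phi$ is the second point to handle: a priori $f$, hence $\Phi$, need not be constant on $=^*$-classes. The remedy is to build the class-dependence into $R$ so that the uniformizing function is forced to be invariant. Concretely, I would strengthen $R$ so that a witness $y$ for $x$ encodes a common homogeneous set for $F_x$ chosen in a way that depends only on the class $F_x$ (not on $x$ itself); because $F_x=F_{x'}$ whenever $x=^*x'$, the solution then depends only on the $=^*$-class, so the $f$ returned by $\mbox{LU}(\mbb{P})$ is constant on classes and $\Phi=\psi\circ f$ is invariant. Making this selection depend only on the (countable) family $F_x$, rather than on a choice of representative, is the subtle step, and it is exactly here that the hypothesis that every $=^*$-class is countable is used---both to keep $F_x$ countable (so the closure claim applies) and to carry out the selection uniformly. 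This yields the desired $p^*\le p$ and invariant $\Phi:[p^*]\to[\kappa]^\kappa$.
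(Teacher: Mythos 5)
There is a genuine gap, in fact two, and they are linked. Your scaffolding (exploit $\lambda=\omega\lambda$, feed homogeneous sets through $\psi$ and $\mbox{LU}(\mbb{P})$, use countability of the $=^*$-classes for invariance) matches the paper, but both of your key steps fail as sketched. The closure claim's proof via interleaving does not work: a single $\mu$-valued coloring $d$ cannot simultaneously encode the (possibly different) homogeneous values of infinitely many colorings. Concretely, whatever designated part $X_n$ of $X$ the coloring $d$ consults, a $d$-homogeneous $H$ only constrains $c_n$ on sets realizable as that part of some $X\in[H]^\lambda$, and an arbitrary $Y\in[H]^\lambda$ (say the first $\lambda$ elements of $H$) cannot be so realized since there is no room in $H$ to interleave the other blocks; repairing this requires replacing $Y$ by limits of $\omega$-blocks, which is the paper's operation $\Omega$, and that repair works one coloring at a time, not for a combined $d$. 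Worse, any ``steering index'' $n(X)$ telling $d$ which $c_n$ to consult is itself a coloring that the partition relation forces to be constant on a large set, so one application can only ever see one $c_n$. Finally, proving the closure claim by $\omega$ separate applications requires \emph{choosing} a homogeneous set for each coloring, i.e.\ a fragment of countable choice not among the hypotheses of the proposition; in the paper, $\mbox{LU}(\mbb{P})$ itself is the selection device, which is exactly why it is applied \emph{before} any merging rather than after, as in your plan.

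The invariance step has the same chicken-and-egg problem. $\mbox{LU}(\mbb{P})$ hands you an arbitrary uniformizing $f$; invariance of the relation $R$ in its first coordinate does not make $f$ invariant, and ``strengthening $R$ so the witness depends only on the class'' would require a choice-free, definable selection of one common homogeneous set from each countable family $F_x$ --- precisely the construction you never give, and which cannot be given before some homogeneous sets exist to canonicalize. The paper resolves both issues in one pass: it applies $\mbox{LU}(\mbb{P})$ to the \emph{pulled-back} relation ``$\psi(r)$ is homogeneous for $\rho_q$'' where $\rho_q(y):=\pi_q(\Omega(y))$ and $\Omega(y)$ is the set of sups of the $\omega$-blocks of $y$; this yields non-invariant sets $B(q):=\psi(f(q))$, which are then merged \emph{canonically} (always taking least ordinals) over the countable class $\{q':q'=^*q\}$ into an invariant $C(q)$ --- countability of the class is what keeps $C(q)$ inside $[\kappa]^\kappa$ --- and finally $\Phi(q):=\Omega(C(q))$. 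Homogeneity survives this merge only because of the pullback: every $x\in[\Omega(C(q))]^\lambda$ has the form $\Omega(y)$ for some $y\in[B(q)]^\lambda$, whence $\pi_q(x)=\rho_q(y)$ is constant. In your ordering of the steps, the sets you would merge are homogeneous for the original $\pi$'s, and such homogeneity does \emph{not} persist under merging, since the merged set contains ordinals from all the different $B(q')$'s; only homogeneity for the $\rho$'s does. So the proposal needs the paper's reordering (LU first on the $\rho$-relation, canonical merge second) to become a proof.
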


\begin{proof}
Given a  set of ordinals $x$ in  ordertype $\lambda$,
 let ${\Omega(x)}$ be the set of all limits
 of the $\omega$-blocks of $x$.
That is, if $\{ x_\alpha : \alpha < \lambda \}$
 is the increasing enumeration of $x$, then
 $$\Omega(x) = \{
 \sup_{n \in \omega} x_{\omega \beta + n } :
 \beta < \lambda \}.$$
Note that since
 $\omega \lambda = \lambda$,
 ${\Omega(x)}$ is in $[\kappa]^\lambda$
 whenever $x$ is.

For each $q \le p$ define
 $\rho_q : [\kappa]^\lambda \to \mu$ by
 $$
 \rho_q(y) = \pi_q( {\Omega(y)}).
 $$
Let $R \subseteq X \times {^\omega 2}$ be the relation
 $$
 R(q,r) \Longleftrightarrow \psi(r)
 \mbox{ is homogeneous for } \rho_q.
 $$
By  $\mbox{LU}(\mbb{P})$, we may 
 fix a $p^* \le p$ and a function
 $f : [p^*]    \to {^\omega 2}$
 such that
 $$
 (\forall q \le p^*)\, R(q,f(q)).
 $$
 Thus, 
 $$(\forall q \in \mbox{Dom}(f))\, \psi(f(q))
 \mbox{ is homogeneous for } \rho_q.$$
Write $B(q)$ for $\psi(f(q))$.

For each $q \in \mbox{Dom}(f)$, define
 $C(q) \in [\kappa]^\kappa$ as follows:
 $C(q)(0)$ is the least ordinal greater
 than  $B(q')(0)$ for  all $q'$ such that $q' =^* q$.
Let $C(q)(\nu)$ be the least ordinal $\xi$
 such that letting
 $\eta = \sup \{ C(q)(\nu') : \nu' < \nu \}$,
 the interval $[\eta, \xi)$ contains at least
 one element of each $B(q')$ for each $q' =^* q$.
It is in this definition of $C(q)$ that we use
 that each $=^*$ equivalence class is countable.
Without this assumption, we might have
 $C(q)(0) = \kappa$.
Note that if $q' =^* q$ then
 $C(q') = C(q)$.
Hence, $C$ is invariant.

Now for each $q \in \mbox{Dom}(f)$, define
 $\Phi(q) := {\Omega(C(q))}$.
We have that $\Phi$ is invariant.
We claim that $\Phi(q)$ is homogeneous for
 $\pi_q$.
Consider any $x \in [\Phi(q)]^\lambda$.
By construction of $C(q)$, there is some
 $y \in [B(q)]^\lambda$ such that 
 $x = {\Omega(y)}$.
Now $\pi_q(x) = \pi_q({\Omega(y)}) =
 \rho_q(y)$.
$B(q)$ is homogeneous for $\rho_q(y)$,
 so each such value of $\rho_q(y)$ is the same.
Hence, each $x \in [\Phi(q)]^\lambda$
 has the same $\pi_q(x)$ value,
 so $\Phi(q)$ is homogeneous for $\pi_q$.
\end{proof}

\begin{remark}
It is not known if the previous proposition holds 
if the $=^*$ equivalence classes are uncountable.
\end{remark}

\begin{thm1.4}
Suppose $\kappa \rightarrow (\kappa)^\lambda_\mu$,
 where $\kappa, \lambda, \mu$ are non-zero ordinals such that
 $\lambda = \omega \lambda \le \kappa$ and
 $2 \le \mu < \kappa$.
Suppose also that there is a surjection
 from ${^\omega 2}$ to $[\kappa]^\kappa$
 (which happens if we assume $\ad$ and $\kappa < \Theta$).
Let $\mbb{P} = \langle X, \le, \le^* \rangle$
 be a coarsened poset such that
 $\mbox{EP}(\mbb{P})$ and each $=^*$-equivalence class is countable.
Assume every $S \subseteq X$ is Ramsey.
If $\mbox{LU}(\mbb{P})$ holds and $\langle X, \le \rangle$  adds
no new sets of ordinals,
 then $\langle X, \le \rangle$ forces
 $\kappa \rightarrow (\kappa)^\lambda_\mu$.
\end{thm1.4}

\begin{proof}
The relation $\forces$  corresponds to  the forcing
 $\langle X, \le \rangle$.
Note that the assumption
$\kappa \rightarrow (\kappa)^\lambda_{\mu}$
implies  that  $\kappa \rightarrow (\kappa)^\lambda_{\mu+1}$ also holds.
Suppose $p_0 \forces \dot{f} : [\check{\kappa}]^{\check{\lambda}} \to \check{\mu}$.
We will find a $p_2 \le p_0$ in $X$ and some
 $A \in [\kappa]^\kappa$ such that
 $p_2 \forces \dot{f}$ is constant on $[\check{A}]^{\check{\lambda}}$.

For each $p \le p_0$, define a partition
 $\pi_p : [\kappa]^\lambda \to \mu + 1$ by
 $$
 \pi_p(A) =
 \begin{cases} \zeta & \mbox{ if } p \forces \dot{f}(\check{A}) = \check{\zeta}, \\
 \mu & \mbox{ if there is no such } \zeta.
 \end{cases}$$
Using Proposition~\ref{prop_ctbleqclass}
 and assuming $\mbox{LU}(\mbb{P})$,
 there is a $p_1 \le p_0$ and an invariant function
 $\Phi : [p_1] \to [\kappa]^\kappa$
 such that for each $p \le p_1$,
 $\Phi(p)$ is homogeneous for $\pi_p$.
By Proposition~\ref{part_invariantimpliesramsey},
 there is some $p_2 \le p_1$
 with $\Phi$ constant on $[p_2]$.
Set $A = \Phi(p_2)$.
We claim that $$p_2 \forces \check{A} \mbox{ is homogeneous for } \dot{f}.$$
If not, there are $D,E \in [A]^\lambda$, $q \le p_2$
 and $\alpha < \beta < \mu$
 such that
 $$q \forces \dot{f}(\check{D}) = \check{\alpha} \mbox{ and }
 \dot{f}(\check{E}) = \check{\beta}.$$
So, $\pi_q(D) = \alpha$ and $\pi_q(E) = \beta$.
Thus $A$ is not homogeneous for $\pi_q$, contradicting that 
$A = \Phi(q)$, which is homogeneous for $\pi_q$.
\end{proof}

\begin{theorem}\label{thm.4maintRs}
Assume  either AD$_{\mathbb{R}}$ or AD$^+ + V=L(\mathcal{P}(\mathbb{R}))$.
Let $\mbb{P} = \langle \mathcal{R}, \le, \le^* ,r\rangle$
 be a coarsened topological Ramsey space,  where  
 the $=^*$-equivalences classes are countable.
Then forcing with $\lgl \mathcal{R},\le\rgl$
preserves 
 $\kappa \rightarrow (\kappa)^\lambda_\mu$, whenever
  $\kappa \rightarrow (\kappa)^\lambda_\mu$ holds in the ground model,
 where $\kappa, \lambda, \mu$ are non-zero ordinals such that
 $\lambda = \omega \lambda \le \kappa$ and
 $2 \le \mu < \kappa$, and 
 there is a surjection
 from ${^\omega 2}$ to $[\kappa]^\kappa$.
 \end{theorem}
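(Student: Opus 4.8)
The plan is to derive Theorem~\ref{thm.4maintRs} as a direct application of the already-established machinery, namely Theorem~\ref{thm.spcpres} (= Theorem~\ref{thm.barren}'s companion for strong partition cardinals), once we verify that its hypotheses hold in the present setting. Writing $\mbb{P}=\lgl\mathcal{R},\le,\le^*,r\rgl$ for the given coarsened topological Ramsey space, the four ingredients Theorem~\ref{thm.spcpres} requires below $\lgl X,\le\rgl=\lgl\mathcal{R},\le\rgl$ are: (i) $\mbox{EP}(\mbb{P})$; (ii) every $S\sse\mathcal{R}$ is Ramsey; (iii) $\mbox{LU}(\mbb{P})$; and (iv) that $\lgl\mathcal{R},\le\rgl$ adds no new sets of ordinals. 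The countability of the $=^*$-equivalence classes and the surjection from ${}^\om 2$ onto $[\kappa]^\kappa$ are assumed outright in the statement, so they transfer directly. First I would invoke Lemma~\ref{lem.ADimpliesRamsey}: since $M$ satisfies either $\ad_{\mathbb{R}}$ or $\ad^+ + V=L(\mathcal{P}(\mathbb{R}))$, and $\lgl\mathcal{R},\le,r\rgl$ is an axiomatized topological Ramsey space, every $S\sse\mathcal{R}$ is Ramsey. This settles (ii).

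Next I would establish $\mbox{LCU}(\lgl\mathcal{R},\le\rgl)$ via Proposition~\ref{Mathias_prop}, whose hypotheses (a closed axiomatized topological Ramsey space together with one of the two determinacy axioms) are exactly what we have. Since $\mbox{LCU}(\mbb{P})$ strengthens $\mbox{LU}(\mbb{P})$ (the uniformizing function is merely required to be continuous, and dropping that requirement gives plain $\mbox{LU}$), this yields (iii). To obtain (i), $\mbox{EP}(\mbb{P})$, I would apply Proposition~\ref{prop.EP}: it concludes $\mbox{EP}(\mbb{P})$ from $\mbox{LCU}(\lgl\mathcal{R},\le\rgl)$, countable choice for sets of reals, and the assumption that every subset of $\mathcal{R}$ is Ramsey. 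The first of these we have just proved, the third follows from Lemma~\ref{lem.ADimpliesRamsey} as above, and countable choice for sets of reals is a consequence of either of the ambient determinacy hypotheses. Thus (i) holds.

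For (iv), that $\lgl\mathcal{R},\le\rgl$ adds no new sets of ordinals, I would appeal to Theorem~\ref{thm.barren} (equivalently Theorem~\ref{thm.barrenufs}). A coarsened topological Ramsey space is in particular a coarsened poset whose $\le^*$ is $\sigma$-closed with the same separative quotient as $\le$; under the determinacy hypotheses all cubes of $\lgl\mathcal{R},\le\rgl$ are Ramsey by Lemma~\ref{lem.ADimpliesRamsey}. Provided the Left-Right Axiom holds for a (possibly extended) coarsening, Theorem~\ref{thm.barren} then gives that $\lgl\mathcal{R},\le\rgl$ adds no new sets of ordinals. Here I expect the main subtlety: Theorem~\ref{thm.barren} needs the Left-Right Axiom, which is not literally among the defining conditions of a \emph{coarsened} topological Ramsey space in Definition~\ref{defn.strcoarse}. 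So the cleanest route is either to fold the LRA into the standing hypotheses (as the ``Ramsey-like'' terminology does), or to observe that for the strong-partition conclusion one only needs ``adds no new sets of ordinals,'' which for these spaces is supplied by the barren-extension theorem under the LRA. I would state explicitly which form is being used.

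With (i)--(iv) in hand, the proof closes by a single citation: apply Theorem~\ref{thm.spcpres} with $X=\mathcal{R}$, using the assumed $\kappa\ra(\kappa)^\lambda_\mu$ in $M$, the surjection from ${}^\om 2$ onto $[\kappa]^\kappa$, the countability of the $=^*$-classes, and the four verified hypotheses, to conclude that $\lgl\mathcal{R},\le\rgl$ forces $\kappa\ra(\kappa)^\lambda_\mu$. The genuinely new content of this theorem is thus organizational rather than technical---it is the assembly of the determinacy-driven lemmas (Lemma~\ref{lem.ADimpliesRamsey}, Proposition~\ref{Mathias_prop}, Proposition~\ref{prop.EP}) into the hypothesis package of Theorem~\ref{thm.spcpres}---so the write-up should be short, and the one place demanding care is making the ``no new sets of ordinals'' clause (iv) rigorous by pinning down exactly where the Left-Right Axiom is being invoked.
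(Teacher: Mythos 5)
Your proposal follows the same route as the paper's proof: under either determinacy hypothesis, Lemma \ref{lem.ADimpliesRamsey} gives that every subset of $\mathcal{R}$ is Ramsey, Proposition \ref{Mathias_prop} gives $\mbox{LCU}(\mbb{P})$ (hence $\mbox{LU}(\mbb{P})$), Proposition \ref{prop.EP} gives $\mbox{EP}(\mbb{P})$, and Theorem \ref{thm.spcpres} then closes the argument. The one place where you diverge is instructive: the paper's proof is four lines long and cites Theorem \ref{thm.spcpres} without ever addressing its hypothesis that $\lgl \mathcal{R},\le\rgl$ adds no new sets of ordinals, whereas you correctly observe that this clause does not follow from Definition \ref{defn.strcoarse} alone --- in this paper it is only ever obtained from Theorem \ref{thm.barren}, whose Left-Right Axiom hypothesis is not among the defining conditions of a coarsened topological Ramsey space. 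Your suggestion to fold the LRA into the standing hypotheses (or to state explicitly where ``no new sets of ordinals'' comes from) matches how the theorem is actually deployed downstream: in Theorem \ref{thm.maintRs} the forcing is assumed Ramsey-like, which builds in the LRA, and in Theorem \ref{thm.ISimpliesnonewsets} Independent Sequencing supplies the LRA via Lemma \ref{lem.ecIS}. So your write-up is, on precisely this point, more careful than the paper's own proof; the assembly of the determinacy-driven lemmas into the hypothesis package of Theorem \ref{thm.spcpres} is otherwise identical.
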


\begin{proof}
Assuming AD,
countable choice for reals holds, so there is a continuous bijection between $\mathcal{R}$ and $\baire$, by Proposition \ref{lem.tRsprojectivelypres}.
Lemma~\ref{lem.ADallsetsRamsey} gives us that every subset of $\mathcal{R}$ is Ramsey.
Then LCU$(\mathbb{P})$ holds by 
Proposition \ref{Mathias_prop}, so Proposition \ref{prop.EP}  implies EP$(\mathbb{P})$ holds.
Theorem \ref{thm.spcpres} yields the result. 
\end{proof}

The second part of Theorem \ref{thm.maintRs} follows 
from Theorem \ref{thm.4maintRs}.

In Section \ref{sec.5} we shall show that three families of topological Ramsey spaces forcing ultrafilters with weak partition properties satisfy the conditions of Theorem \ref{thm.4maintRs}.


 \section{Ultrafilters  with  barren extensions  preserving  strong partition cardinals}\label{sec.5}

 In this and the next section, we  provide  examples of many  forcings  producing  barren extensions  with 
 ultrafilters satisfying  different partition relations.
 Recall Definition
 \ref{defn.Rdeg}:
Given an ultrafilter $\mathcal{U}$ on a countable base set $S$, for each $n\ge 2$, 
$t(\mathcal{U},n)$ is  the least number $t$, if it exists, such that  for any $\ell\ge 2$ and any  coloring $c:[S]^n\ra \ell$, there is a member $U\in\mathcal{U}$ such that $c\re [U]^n$ takes no more than $t$ colors.
An ultrafilter $\mathcal{U}$ is {\em Ramsey} if and only if
 $t(\mathcal{U},n)=1$ for all $n$.

 In this section, we show that three classes of topological Ramsey spaces  forcing  non-Ramsey ultrafilters have  generic extensions with  no new sets of ordinals and preserve strong partition cardinals. 
 These are the class of  Milliken-Taylor ultrafilters  investigated by Mildenberger in \cite{Mildenberger11},
 a hierarchy of ultrafilters of Laflamme  in \cite{Laflamme89} extending weakly Ramsey ultrafilters, and 
 a class of ultrafilters of Dobrinen, Mijares and Trujillo  in \cite{Dobrinen/Mijares/Trujillo17} which encompass  $k$-arrow, non-$(k+1)$-arrow ultrafilters of Baumgartner and Taylor  in \cite{Baumgartner/Taylor78} as well as $n$-square forcing of Blass in \cite{Blass73}.


 \subsection{Milliken-Taylor ultrafilters}\label{subsec.FIN}
 The first class  of coarsened posets that we look at are topological Ramsey spaces of infinite block sequences of vectors.
The reader is referred to Section 5.2 in   \cite{TodorcevicBK10} for a thorough presentation of these  spaces.
The members of $\FIN^{[\infty]}_k$ are infinite sequences $x=\lgl x_i:i<\om\rgl$ such that 
for each $i<\om$, the following hold:
\begin{enumerate}
\item[1)] $x_i$ is a function from $\om$ into $k+1$, and 
 the support of $x_i$,   defined by  $\mbox{supp}(x_i)=\{n\in\om:x_i(n)\ne 0\}$, is finite;
 \item[2)]
 There is some $n\in\mbox{supp}(x_i)$ such that $x_i(n)=k$;
 \item[3)]  
 $\max(\mbox{supp}(x_i))<\min(\mbox{supp}(x_{i+1}))$.
 \end{enumerate}

The $n$-th approximation to $x$ is $r_n(x)=\lgl x_i:i<n\rgl$. 
For $x,y\in\FIN^{[\infty]}_k$,  $y\le x$ iff
$y$ is obtainable from $x$ using the tetris operation.
The  
definition  of the tetris operation 
 is not needed for the proof in this section, so we refer the interested reader to  \cite{TodorcevicBK10}.
For $x\in\FIN^{[\infty]}_k$ and $n<\om$,
let $x/n$ denote the tail $\lgl x_n,x_{n+1},x_{n+2},\dots\rgl$. 
The coarsening  $\le^*$ on $\FIN^{[\infty]}_k$ is defined as follows:
 $y\le^* x$ iff there is some $n$ such that  $y/n\le x$.
 This quasi-order  $\le^*$ is $\sigma$-closed and $\lgl \FIN^{[\infty]}_k,\le\rgl$ and $\lgl \FIN^{[\infty]}_k,\le^*\rgl$ are forcing equivalent.

 $\lgl \FIN^{[\infty]}_k,\le^*\rgl$ forces ultrafilters, referred to as Milliken-Taylor ultrafilters in \cite{Mildenberger11}.
 For $k=1$, such ultrafilters are called stable ordered union ultrafilters and were first investigated by  Blass in \cite{Blass87}.
 In  \cite{Mildenberger11}, Mildenberger showed that  forcing with  $\lgl \FIN^{[\infty]}_k,\le^*\rgl$  produces an ultrafilter, denoted $\mathcal{U}^k$, with at least $k+1$-near coherence classes of ultrafilters Rudin-Keisler below it.

\begin{lemma}\label{lem.FINkLRA}
The coarsened topological Ramsey space  $\lgl \FIN^{[\infty]}_k,\le, \le^*, r\rgl$ satisfies the Left-Right Axiom.
Furthermore, each $=^*$-class is countable.
\end{lemma}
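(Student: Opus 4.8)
The plan is to verify the Left-Right Axiom for $\lgl \FIN^{[\infty]}_k,\le,\le^*,r\rgl$ by exhibiting concrete $\mbox{Left}$ and $\mbox{Right}$ functions that split each block sequence into its even- and odd-indexed sub-sequences. Given $x=\lgl x_i:i<\om\rgl$, I would define $\mbox{Left}(x)=\lgl x_0,x_2,x_4,\dots\rgl$ (the even-indexed blocks) and $\mbox{Right}(x)=\lgl x_1,x_3,x_5,\dots\rgl$ (the odd-indexed blocks). Both of these are obtained from $x$ by passing to an infinite sub-sequence of the blocks (no tetris mixing is needed), so each is $\le x$, and in particular $\le^* x$, giving property 1) immediately. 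The intuition is that a block sequence has two natural ``halves'' which interleave, and the LRA is precisely the combinatorial bookkeeping needed to shuffle and re-split these halves.

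For property 2), given $x$ I would produce $y,z\le x$ whose Left/Right halves cross over. The clean choice is to let $y$ consist of a re-blocking of $x$ that groups blocks so that $\mbox{Left}(y)$ picks out exactly the blocks $\mbox{Right}(z)$ picks out, and vice versa; concretely, one sets $y$ and $z$ to be two different sub-sequences of $x$ arranged so that the even blocks of $y$ coincide (up to $=^*$, i.e.\ up to a finite shift of the tail) with the odd blocks of $z$, and the odd blocks of $y$ coincide with the even blocks of $z$. Since $=^*$ only requires agreement on a tail $y/n\le x$, I have the freedom to absorb any finite discrepancy. For property 3), given $p$ and $x,y\in[p]$, I would build a single $z\in[p]$ by interleaving blocks drawn from $x$ and $y$: arrange $z$ so that its even-indexed blocks (which form $\mbox{Left}(z)$) are drawn from $x$, so that the even blocks of $\mbox{Right}(z)$ are again drawn from $x$, and the odd blocks of $\mbox{Right}(z)$ are drawn from $y$. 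Then $\mbox{Left}(z)\le^* x$ gives 3a), $\mbox{Left}(\mbox{Right}(z))\le^* x$ gives 3b), and $\mbox{Right}(\mbox{Right}(z))\le^* y$ gives 3c). All three reductions hold because each named sub-sequence consists of blocks taken from $x$ or from $y$ respectively, hence is $\le^*$ (again up to a finite tail) the relevant condition.

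The countability of each $=^*$-class is the easier half: $y=^* x$ means $y$ and $x$ share a common tail below each other, so $y$ differs from $x$ only by finitely much data (a finite initial segment of blocks together with a finite shift). Since $\mathcal{AR}$ restricted below any member is countable by Axiom \bf A.2\rm, and a $=^*$-class is determined by the common infinite tail together with a finite amount of front-end information, there are only countably many members in each class. I would make this precise by noting that each $y=^* x$ is coded by a pair consisting of a natural number $n$ and a finite block sequence $\lgl y_0,\dots,y_{n-1}\rgl\in\mathcal{AR}_n$, of which there are only countably many.

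The step I expect to be the main obstacle is the careful interleaving construction for property 3), where I must simultaneously control three nested applications of $\mbox{Left}$ and $\mbox{Right}$ while ensuring the result is a genuine member of $\FIN^{[\infty]}_k$ (so the support-disjointness condition 3) in the definition of $\FIN^{[\infty]}_k$ and the ``$k$ is attained'' condition 2) are preserved) and lies below $p$. The interleaving must be organized so that reading off the even blocks, then the odd blocks, then the even and odd halves of that, each lands in the correct source sequence; the bookkeeping is elementary but requires a precise indexing scheme, and I would set it up by first fixing an explicit enumeration of which index of $z$ is fed from $x$ versus $y$ before checking each of 3a)--3c).
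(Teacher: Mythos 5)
Your proposal is correct and takes essentially the same route as the paper: the same even/odd-indexed definitions of $\mbox{Left}$ and $\mbox{Right}$, the same interleaving scheme for LRA 3) (the paper makes your indexing explicit by feeding positions $\equiv 0,1,2 \pmod 4$ from $x$ and positions $\equiv 3 \pmod 4$ from $y$, at each step taking the least-indexed block whose support lies beyond the previous one, which settles your support-disjointness worry), and the same coding of $=^*$-classes by a finite initial block sequence together with a tail of $x$. The one step you describe but never exhibit --- the witnesses $y,z$ for LRA 2) --- is realized in the paper simply by $y=x$ and $z=\lgl x_i : i\ge 1\rgl$, whose Left/Right halves cross over exactly up to a one-block shift, which is precisely the finite discrepancy your $=^*$ remark absorbs.
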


\begin{proof}
In order to show that  the Left-Right Axiom is satisfied by the coarsened topological Ramsey space  $\lgl \FIN^{[\infty]}_k,\le, \le^*, r\rgl$, if suffices to know the following simple fact:
Given any $x\in\FIN^{[\infty]}_k$, both sequences $\lgl x_{2i}:i<\rgl$ and $\lgl x_{2i+1}:i<\om\rgl$ are members of $\FIN^{[\infty]}_k$.
Define the functions Left and Right on $X\in\FIN^{[\infty]}_k$ as follows:
Given $x=\lgl x_i:i<\om\rgl\in X\in\FIN^{[\infty]}_k$,
let 
$\mbox{Left}(x)=\lgl x_{2i}:i<\rgl$ and $\mbox{Right}(x)=\lgl x_{2i+1}:i<\om\rgl$.
Then in the Left-Right Axiom, 1) is satisfied, since  in fact, 
$\mbox{Left}(x)\le x$ and $\mbox{Right}(x)\le x$.
To show that 2) holds,
given  $x\in \FIN^{[\infty]}_k$, let $y=x$ and $z=\lgl x_i:i\ge 1\rgl$.
Then both $y,z\le x$, $\mbox{Left}(y)=^*\mbox{Right}(z)$ since $\mbox{Left}(y)/1=\mbox{Right}(z)$, 
and $\mbox{Right}(y)=\mbox{Left}(z)$.

For 3), given $p\in \FIN^{[\infty]}_k$ and $x,y\le p$,
 define $z$ as follows:
 Take $z_i=x_i$ for $i<3$.
 Then let $z_3 =y_j$ for  $j$ minimal such that $\min(\mbox{supp}(y_j))>\max(\mbox{supp}(x_2))$.
 Given $z_i$ for $i\equiv 0,1,3$ (mod 4),
 let $z_{i+1}=x_j$ for  $j$ minimal such that $\min(\mbox{supp}(x_j))>\max(\mbox{supp}(z_i))$.
 Given $z_i$ for $i\equiv 2$ (mod 4),
 let $z_{i+1}=y_j$ for  $j$ minimal such that $\min(\mbox{supp}(y_j))>\max(\mbox{supp}(z_i))$.
 Then this $z=\lgl z_i:i<\om\rgl$ is in $\FIN^{[\infty]}_k$, $z\le a$, and $z$
 satisfies 3) for $x$ and $y$.
 Thus, the Left-Right Axiom holds.

By the definition of $\le^*$, 
$x=^* y$ if and only if there are $m,n$ such that $x/m=y/n$.
Thus,
each $=^*$-equivalence class is countable.
\end{proof}

 Since $\lgl \FIN^{[\infty]}_k,\le, \le^*, r\rgl$ is a coarsened closed axiomatized topological Ramsey space, it produces a barren extension.

 \begin{cor}\label{cor.Fin}
 Assume  $M$ is a model of ZF +  either 1) AD$_{\mathbb{R}}$ or  2) AD$^+ + V=L(\mathcal{P}(\mathbb{R}))$.
Then  forcing with $\lgl \FIN^{[\infty]}_k,\le^*\rgl$  over $M$ 
 adds an ultrafilter $\mathcal{U}_k$ such that $M[\mathcal{U}_k]$ has the same sets of ordinals as  $M$.
 Furthermore, for  all non-zero ordinals $\kappa,\lambda,\mu$ such that $\lambda=\om\lambda\le\kappa$ and $2\le\mu\le\kappa<\Theta$,
 if $\kappa\ra(\kappa)^{\lambda}_{\mu}$ in $M$, then it also  holds in $M[\mathcal{U}_k]$.
 \end{cor}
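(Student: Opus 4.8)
The plan is to obtain Corollary~\ref{cor.Fin} as a direct instance of Theorem~\ref{thm.maintRs}, with Lemma~\ref{lem.FINkLRA} supplying exactly the two structural facts that the theorem requires. First I would confirm that $\lgl \FIN^{[\infty]}_k,\le^*\rgl$ is a Ramsey-like forcing. By the presentation in Section~5.2 of \cite{TodorcevicBK10}, the triple $\lgl \FIN^{[\infty]}_k,\le,r\rgl$ is an axiomatized topological Ramsey space: it is closed as a subspace of $\mathcal{AR}^{\bN}$ and satisfies axioms \textbf{A.1}--\textbf{A.4}. The discussion preceding Lemma~\ref{lem.FINkLRA} records that $\le^*$ is a $\sigma$-closed coarsening of $\le$ and that $\lgl \FIN^{[\infty]}_k,\le\rgl$ and $\lgl \FIN^{[\infty]}_k,\le^*\rgl$ are forcing equivalent. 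Lemma~\ref{lem.FINkLRA} then provides the Left-Right Axiom, where the extended space $X^*$ may be taken to be $\FIN^{[\infty]}_k$ itself, since the functions $\mbox{Left}$ and $\mbox{Right}$ land back inside the space (so $X^*=X$ is an admissible, trivial equivalent extension). Hence $\lgl \FIN^{[\infty]}_k,\le^*\rgl$ is Ramsey-like, and $\mathcal{U}_k$ is an ultrafilter it forces.

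With this established, the first half of Theorem~\ref{thm.maintRs} applies verbatim under either determinacy hypothesis and yields that $M$ and $M[\mathcal{U}_k]$ have the same sets of ordinals; in fact every sequence in $M[\mathcal{U}_k]$ of elements of $M$ lies in $M$. This settles the barren extension assertion of the corollary.

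For the preservation of partition relations I would invoke the second half of Theorem~\ref{thm.maintRs}, whose two additional hypotheses are (i) that the $=^*$-equivalence classes are countable and (ii) that there is a surjection in $M$ from ${}^\om 2$ onto $[\kappa]^\kappa$. Hypothesis (i) is precisely the second clause of Lemma~\ref{lem.FINkLRA}. For (ii) I would note that both AD$_{\mathbb{R}}$ and AD$^+ + V=L(\mathcal{P}(\mathbb{R}))$ imply AD, and that under AD the assumption $\kappa<\Theta$ yields such a surjection, exactly as recorded in the parenthetical remark of Theorem~\ref{thm.spcpres}. Thus, for non-zero ordinals with $\lambda=\om\lambda\le\kappa$ and $2\le\mu<\kappa<\Theta$, Theorem~\ref{thm.maintRs} gives $\kappa\ra(\kappa)^{\lambda}_{\mu}$ in $M[\mathcal{U}_k]$ whenever it holds in $M$, which is the remaining claim.

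Because the corollary is genuinely an application of the general machinery, I do not expect a serious obstacle. The only points requiring care are bookkeeping: verifying that the explicit Left-Right data of Lemma~\ref{lem.FINkLRA} truly certifies Ramsey-likeness in the sense of Definitions~\ref{defn.sscp} and~\ref{LRA} (so that the degenerate choice $X^*=X$ is legitimate), and confirming that $\kappa<\Theta$ under AD supplies the surjection onto $[\kappa]^\kappa$ demanded by Theorem~\ref{thm.spcpres} rather than merely a surjection onto $\kappa$.
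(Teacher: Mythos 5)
Your proposal is correct and takes essentially the same route as the paper: the paper's proof of Corollary~\ref{cor.Fin} cites Theorems~\ref{thm.barrenufs} and~\ref{thm.4maintRs} together with Lemma~\ref{lem.FINkLRA}, and your appeal to Theorem~\ref{thm.maintRs} is the same argument, since the paper derives the two halves of Theorem~\ref{thm.maintRs} precisely from those two theorems. Your extra bookkeeping (the legitimacy of the degenerate extension $X^*=X$, and the surjection onto $[\kappa]^\kappa$ from AD with $\kappa<\Theta$) is sound and consistent with how the paper handles these points.
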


 \begin{proof}
 This follows from Theorems  \ref {thm.barrenufs} and \ref{thm.4maintRs} and Lemma \ref{lem.FINkLRA}.
 \end{proof}

 
 \subsection{Extended coarsened posets with Independent Sequencing}\label{subsec.IS}

We define a general property called {\em Independent Sequencing} for partial orders and then for extended coarsened partial orders.  We show that when  an EC poset has Independent Sequencing, then 
the Left-Right Axiom is satisfied and 
hence, Theorem 
 \ref{thm.barren} holds. 
 If further the $=^*$-equivalence classes are countable, then Theorem  \ref{thm.spcpres}
  holds. 
In the following subsections, we show that  the classes of topological Ramsey spaces in the papers
  \cite{DobrinenJSL16},  \cite{DobrinenJML16}, \cite{Dobrinen/Mijares/Trujillo17},  \cite{Dobrinen/Todorcevic14}, and \cite{Dobrinen/Todorcevic15}
  have extended coarsenings which have Independent Sequencing.

\begin{definition}[Independent Sequencing for posets]\label{defn.ISposet}
A poset $\mbb{P}=\lgl X,\le \rgl$ {\em has Independent Sequencing  (IS)} if the following hold:
 \begin{enumerate}
 \item[1)]
For each $x\in X$,   $x$  can be written as a sequence $\lgl x(n):n<\om\rgl$.
Each $x(n)$ is a countable set, possibly with some structure on it.
 \item[2)]
 Given $x,y\in X$, $y\le x$ iff there is a strictly increasing 
 sequence $(i_n)_{n<\om}$ such that  each $y(n)\sse x(i_n)$.
\item[3)]
 Given  $x,y\in X$ and a
 partition of $\om$ into three  pieces,  $P_0,P_1, P_2$, where  at least one of $P_0$ and $P_1$ is infinite,
 there is a $z\in X$ 
 such that for each $n\in P_0$,  $z(n)\sse x(i)$ for some $i$, and for each $n\in P_1$, $z(n)\sse y(i)$ for some $i$.
 Moreover, if   $x,y\le p\in X$, then  there is such a  $z
 \le p$.
 \end{enumerate}
 \end{definition}

This last property 3) is why we call the sequencing ``independent''.

\begin{definition}[Independent Sequencing for EC posets]\label{defn.ISECposet}
An extended coarsened poset $\mbb{P}^*=\lgl X,X^*,\le ,\le^*\rgl$ {\em has Independent Sequencing  (IS)} if  $\lgl X,\le\rgl$ has IS for posets and additionally,
 \begin{enumerate}
 \item[4)]
 $X^*$ is the collection of all  subsequences of members of $X$.
 Thus, 
  $a=\lgl a(n):n<\om\rgl\in X^*$  iff there is an   $x\in X$ and a  strictly increasing sequence $(i_n)_{n<\om}$ such that  each $a(n)=x(i_n)$.
 \item[5)]
 Given $a,b\in X^*$, $b\le a$ iff there is a strictly increasing 
 sequence $(i_n)_{n<\om}$ such that  each $b(n)\sse a(i_n)$.
\item[6)]
 The coarsening $\le^*$ has the property that  for $a,b\in X^*$, 
 if   $\lgl b(n):n\ge m\rgl\le a$ for some $m$, then 
 $b\le^*a$. 
 \end{enumerate}
 \end{definition}
 
 Notice that 4) implies that $X\sse X^*$.  By 2) and 5), the order $\le$ on $X$  is the restriction to $X$ of the order $\le$ on $X^*$. 
 By 3) and 4), for each $a\in X^*$ there is an $x\in X$ such that $x\le a$.
 Thus, $\lgl X,\le\rgl$ is dense in $\lgl X^*,\le\rgl$
 and hence,  $\lgl X,\le^*\rgl$ is dense in $\lgl X^*,\le^*\rgl$.

\begin{lemma}\label{lem.ecIS}
Each  extended  coarsened poset  with   Independent Sequencing satisfies the Left-Right Axiom.
\end{lemma}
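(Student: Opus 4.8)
The plan is to produce the maps $\mbox{Left},\mbox{Right}:X\to X^*$ by extracting the even- and odd-indexed blocks. Given $x=\langle x(n):n<\omega\rangle\in X$, set $\mbox{Left}(x)=\langle x(2n):n<\omega\rangle$ and $\mbox{Right}(x)=\langle x(2n+1):n<\omega\rangle$. Both are subsequences of $x$, so they lie in $X^*$ by clause 4) of Definition~\ref{defn.ISECposet}, and each is $\le x$ in $X^*$ since the block named $x(2n)$ (resp.\ $x(2n+1)$) is contained in itself and the chosen indices are strictly increasing. Applying clause 6) of Definition~\ref{defn.ISECposet} with $m=0$ then gives $\mbox{Left}(x),\mbox{Right}(x)\le^* x$, which is clause 1) of the Left--Right Axiom. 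Throughout, clause 6) with $m=0$ is what I use to pass from a genuine inequality $a\le b$ in $X^*$ to $a\le^* b$.

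For clause 2), I would take $y=x$ and let $z=\langle x(n+1):n<\omega\rangle$ be the one-step shift, both of which are $\le x$. A direct index computation gives $\mbox{Right}(y)=\langle x(2n+1):n<\omega\rangle=\mbox{Left}(z)$, so 2b) holds with literal equality, hence $=^*$. For 2a), the same computation yields $\mbox{Left}(y)=\langle x(2n):n<\omega\rangle$ and, since $z(2n+1)=x(2n+2)$, also $\mbox{Right}(z)=\langle x(2n+2):n<\omega\rangle=\langle\mbox{Left}(y)(n):n\ge 1\rangle$. Thus $\mbox{Right}(z)\le\mbox{Left}(y)$ outright (whence $\mbox{Right}(z)\le^*\mbox{Left}(y)$), while dropping the first block shows $\langle\mbox{Left}(y)(n):n\ge 1\rangle\le\mbox{Right}(z)$, so clause 6) upgrades this to $\mbox{Left}(y)\le^*\mbox{Right}(z)$. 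Hence $\mbox{Left}(y)=^*\mbox{Right}(z)$, giving 2a).

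For clause 3), fix $p$ and $x,y\le p$, and apply the Independent Sequencing property 3) of Definition~\ref{defn.ISposet} to the partition $P_0=\{n:n\not\equiv 3\pmod 4\}$, $P_1=\{n:n\equiv 3\pmod 4\}$, $P_2=\emptyset$ (both $P_0$ and $P_1$ infinite; if empty pieces are disallowed, any finite $P_2$ works, since $\le^*$ disregards finite initial segments by clause 6)). This yields $z\le p$ whose $P_0$-indexed blocks come from $x$ and whose $P_1$-indexed blocks come from $y$. The core is then a bookkeeping computation of which blocks each required map selects: $\mbox{Left}(z)$ uses the even indices, $\mbox{Left}(\mbox{Right}(z))=\langle z(4n+1):n<\omega\rangle$ uses the indices $\equiv 1\pmod 4$, and $\mbox{Right}(\mbox{Right}(z))=\langle z(4n+3):n<\omega\rangle$ uses the indices $\equiv 3\pmod 4$. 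The even indices and those $\equiv 1\pmod 4$ all lie in $P_0$, so the blocks used by $\mbox{Left}(z)$ and by $\mbox{Left}(\mbox{Right}(z))$ are drawn from $x$ (giving 3a) and 3b)); and the indices $\equiv 3\pmod 4$ are exactly $P_1$, so the blocks used by $\mbox{Right}(\mbox{Right}(z))$ are drawn from $y$ (giving 3c)).

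The main obstacle is the final upgrade in clause 3): passing from ``each selected block $z(n)$ is contained in \emph{some} block of $x$ (resp.\ $y$)'' to the inequalities $\mbox{Left}(z)\le^* x$, $\mbox{Left}(\mbox{Right}(z))\le^* x$, and $\mbox{Right}(\mbox{Right}(z))\le^* y$, which demand a \emph{strictly increasing} sequence of indices into $x$ (resp.\ $y$). I would supply this from the order-coherence of the sequencing: the blocks of a member of $X$ occupy distinct, strictly increasing positions, reflected in the strictly increasing witness to $z\le p$ in clause 2), and each nonempty block $z(n)$ meets a unique block of $p$; consequently any subsequence of $z$ all of whose blocks come from $x$ meets the blocks of $x$ in their original increasing order, so it is $\le x$ outright and hence $\le^* x$ by clause 6). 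The identical argument handles the $y$-part for 3c). Verifying these three clauses completes the check that the Left--Right Axiom holds.
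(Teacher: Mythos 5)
Your proof is correct and follows essentially the same route as the paper's: defining $\mbox{Left}$ and $\mbox{Right}$ by the even/odd blocks, taking $y=x$ and the one-step shift $z$ for clause 2) (with IS 6) supplying the $=^*$ in 2a)), and applying IS 3) with the partition into residues $\equiv 0,1,2 \pmod 4$ versus $\equiv 3 \pmod 4$ for clause 3). The only difference is that you spell out the strictly-increasing-indices upgrade at the end, a point the paper's proof passes over silently, so nothing further is needed.
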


\begin{proof}
Let $\mbb{P}^*=\lgl X,X^*,\le ,\le^*\rgl$  be an extended coarsened poset having  Independent Sequencing.
For each $x\in X$, define $\mbox{Left}(x)=\lgl x(2n):n<\om\rgl$ and $\mbox{Right}(x)=\lgl x(2n+1):n<\om\rgl$.
Then  IS 4) implies that Left and Right are functions from $X$ into $X^*$ and  IS  5) implies  that 
 $\mbox{Left}(x), \mbox{Right}(x)\le x$,  so  
LRA  1) holds.
For  LRA 2), given $x\in X$, let $y=x$ and $z=\lgl x(n):n\ge 1\rgl$.
Then 
$$
\mbox{Left}(y)=\lgl x(2n):n<\om\rgl=^*
\lgl x(2n+2):n<\om\rgl=\lgl z(2n+1):n<\om\rgl =\mbox{Right}(z),
$$
 where the $=^*$ holds by  IS 6), so LRA  2a) holds; and 
$$
\mbox{Right}(y)=\lgl y(2n+1):n<\om\rgl=\lgl x(2n+1):n<\om
\lgl z(2n):n\ge 1\rgl=\mbox{Left}(z),
$$
so LRA 2b) holds.

Now let $p,x,y\in X$ with $x,y\le p$ be given. By  IS 3), there is some
 $z\le p$ in $X$  such that for each $n\equiv 0,1,2$ (mod 4), $z(n)=x(i)$ for some $i$, and for each $n\equiv 3$ (mod 4), $z(n)=y(i)$ for some $i$.
 Furthermore, $\mbox{Left}(z)\le x$, $\mbox{Left}(\mbox{Right}(z))\le x$, and $\mbox{Right}(\mbox{Right}(z))\le y$, so  LRA 3) holds.
\end{proof}

Given an extended coarsened topological Ramsey space $\lgl 
\mathcal{R},\mathcal{R}^*,\le,\le^*,r\rgl$,
the forcing $\lgl \mathcal{R},\le^*\rgl$ 
adds an ultrafilter on the countable base set $\mathcal{AR}_1$ as follows:
Letting $G$ be the generic filter on 
$\lgl \mathcal{R},\le^*\rgl$, define  $\mathcal{U}_{\mathcal{R}}$ to be the filter generated by the collection of sets $\mathcal{AR}_1\re X:=\{s\in \mathcal{AR}_1: \exists Y\le X\, (s=r_1(Y))\}$, for $X\in G$.
By genericity and Theorem \ref{thm.AET},
$\mathcal{U}$ is an ultrafilter. 
When the space $\lgl \mathcal{R},\le^*\rgl$ is dense inside some poset $\mathbb{P}$ forcing an ultrafilter, then 
$\mathcal{U}_{\mathcal{R}}$ is isomorphic to the ultrafilter forced by $\mathbb{P}$.
This  fact was behind the work in   
\cite{Dobrinen/Todorcevic14},  \cite{Dobrinen/Todorcevic15}, and 
 \cite{Dobrinen/Mijares/Trujillo17}, which investigated structural results on known ultrafilters  constructed by Laflamme in \cite{Laflamme89}, Baumgartner and Taylor \cite{Baumgartner/Taylor78}, Blass \cite{Blass73}, as well as new ultrafilters related to these.
In the next several subsections, we will show that these classes of ultrafilters have Independent Sequencing and countable $=^*$-equivalence classes, so generic extensions by these ultrafilters will satisfy the next theorem.

The ultrafilters in  Section \ref{sec.6}   are forced by Ramsey spaces constructed in 
 \cite{DobrinenJSL16} and  \cite{DobrinenJML16}, forming a hierarchy over the ultrafilter investigated by 
Szymanski and Xua \cite{Szymanski/Xua83}, forced by $\mathcal{P}(\om\times\om)/\Fin\otimes\Fin$. 
These forcings have Independent Sequencing, but their $=^*$-equivalence classes are uncountable, so the first part of the next theorem holds for them, but we do not know if they preserve strong partition cardinals.

\begin{theorem}\label{thm.ISimpliesnonewsets}
Assume $M$ is a model of ZF + either 1) AD$_{\mathbb{R}}$ or  2) AD$^+ + V=L(\mathcal{P}(\mathbb{R}))$.
In $M$, suppose
 $\lgl \mathcal{R},\mathcal{R}^*,\le,\le^*,r\rgl$ is an extended coarsened topological Ramsey space 
with Independent Sequencing.
Then forcing with $(\mathcal{R},\le^*)$ 
 adds an ultrafilter $\mathcal{U}_{\mathcal{R}}$
such that $M[\mathcal{U}_{\mathcal{R}}]$
has the same sets of ordinals as $M$.
If furthermore, the $=^*$-equivalence classes  are each countable, then $\kappa\ra (\kappa)^{\lambda}_{\mu}$ holds in $M[\mathcal{U}_{\mathcal{R}}]$ whenever $\lambda =\om\lambda\le\kappa$, $ 2\le \mu<\kappa$, there is a surjection from ${}^{\om}2$ to $[\kappa]^{\kappa}$, and $\kappa\ra (\kappa)^{\lambda}_{\mu}$ holds in $M$.
\end{theorem}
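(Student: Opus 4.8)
The plan is to obtain both conclusions from the general results already in hand, after observing that Independent Sequencing supplies the missing hypotheses. For the first conclusion, I would begin with Lemma~\ref{lem.ecIS}, which shows that the extended coarsened poset $\lgl \mathcal{R},\mathcal{R}^*,\le,\le^*\rgl$ underlying the space satisfies the Left-Right Axiom. Combined with the facts that $\lgl \mathcal{R},\le,r\rgl$ is an axiomatized topological Ramsey space and that $\le^*$ is a $\sigma$-closed coarsening of $\le$, this is precisely the hypothesis list of Theorem~\ref{thm.barrenufs}. Applying that theorem under the assumed determinacy (either $\ad_{\mathbb{R}}$ or $\ad^+ + V=L(\mathcal{P}(\mathbb{R}))$) gives that $M$ and $M[\mathcal{U}_{\mathcal{R}}]$ have the same sets of ordinals and that every sequence in $M[\mathcal{U}_{\mathcal{R}}]$ of elements of $M$ lies in $M$. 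Here $\mathcal{U}_{\mathcal{R}}$ is the ultrafilter on $\mathcal{AR}_1$ added by $\lgl \mathcal{R},\le^*\rgl$, and since the generic filter is recoverable from $\mathcal{U}_{\mathcal{R}}$, the model $M[\mathcal{U}_{\mathcal{R}}]$ is the full generic extension.

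For the second conclusion, assuming in addition that every $=^*$-equivalence class is countable, the plan is to reduce to Theorem~\ref{thm.4maintRs}. That theorem is stated for a coarsened topological Ramsey space $\lgl \mathcal{R},\le,\le^*,r\rgl$ in the sense of Definition~\ref{defn.strcoarse}, so the task is to verify that the non-extended part of our space meets that definition. Conditions (1) and (2) are immediate: $\le^*$ is $\sigma$-closed by hypothesis, and the coarsened-poset axioms of Definition~\ref{poset_def} follow from Independent Sequencing, since $y\le^* x$ unpacks as $\lgl y(n):n\ge m\rgl\le x$ for some $m$, and then $z=\lgl y(n):n\ge m\rgl$ witnesses both $z\le x$ and $z=^* y$.

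The main obstacle is condition (3) of Definition~\ref{defn.strcoarse}: whenever $\emptyset\ne[a,q]\sse[a,p]$ for some $a\in\mathcal{AR}$, then $q\le^* p$. I would prove this directly from the sequence structure. Write $k=|a|$; since $[a,q]\ne\emptyset$, $a$ is realizable as an initial approximation below $q$, so $a\le_{\mathrm{fin}}r_j(q)$ for some $j$. Form the canonical member $Y$ whose first $k$ coordinates are $a(0),\dots,a(k-1)$ and whose remaining coordinates are the tail $\lgl q(n):n\ge j\rgl$. Then $Y\le q$ and $r_k(Y)=a$, so $Y\in[a,q]\sse[a,p]$, whence $Y\le p$. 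Unwinding $Y\le p$ through clause 2) of Independent Sequencing produces a strictly increasing index sequence witnessing $\lgl q(n):n\ge j\rgl\le p$, and clause 6) then yields $q\le^* p$. With Definition~\ref{defn.strcoarse} verified, Theorem~\ref{thm.4maintRs} applies directly: forcing with $\lgl \mathcal{R},\le\rgl$, which is forcing equivalent to $\lgl \mathcal{R},\le^*\rgl$ and hence adds $\mathcal{U}_{\mathcal{R}}$, preserves $\kappa\ra(\kappa)^\lambda_\mu$ for all $\kappa,\lambda,\mu$ as in the statement, completing the argument.
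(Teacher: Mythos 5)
Your first paragraph is correct and is essentially the paper's own route: the paper proves the no-new-sets-of-ordinals conclusion by combining Lemma \ref{lem.ecIS} with Theorem \ref{thm.barren} (your citation of Theorem \ref{thm.barrenufs} is just the packaged form of the same two ingredients), and it obtains the partition-relation conclusion by citing Theorem \ref{thm.4maintRs}. You are also right, and in fact more careful than the paper's one-line proof, to flag that invoking Theorem \ref{thm.4maintRs} requires knowing that $\lgl \mathcal{R},\le,\le^*,r\rgl$ is a coarsened topological Ramsey space in the sense of Definition \ref{defn.strcoarse}, and that clause 3) of that definition is the nontrivial point.

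However, your verification of clause 3) has a genuine gap: the ``canonical member'' $Y$ whose first $k$ coordinates are $a(0),\dots,a(k-1)$ and whose remaining coordinates are the tail $\lgl q(n):n\ge j\rgl$ need not belong to $\mathcal{R}$. Independent Sequencing gives no closure of $\mathcal{R}$ under such index-shifting concatenations (clause 4) of Definition \ref{defn.ISECposet} only puts subsequences into $\mathcal{R}^*$, not into $\mathcal{R}$), and in the paper's central examples the object $Y$ is genuinely not a member. In Laflamme's spaces $\mathcal{R}_{\al}$, in $\mathcal{A}_k$, and in the hypercube spaces $\mathcal{H}^k$ (Subsections \ref{subsec.Laflamme} and \ref{subsec.DMT}), the $n$-th coordinate of any member must have the isomorphism type of the $n$-th structure in a fixed generating sequence; your $Y$ places $q(j+l)$, which has the type required at position $j+l$, into position $k+l$, so $Y\notin\mathcal{R}$ whenever $\depth_q(a)=j>|a|=k$ --- which is unavoidable, since clause 3) quantifies over all $a$, not just exact initial segments $r_k(q)$. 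The same index-shifting error occurs in your justification of Definition \ref{poset_def}: the bare tail $\lgl y(n):n\ge m\rgl$ lies in $\mathcal{R}^*$ but generally not in $\mathcal{R}$ (this part is harmless only because being a coarsened poset is already part of the EC-poset hypothesis). Note that the paper itself, when treating $\mathcal{R}_{\al}$, is careful to concatenate $r_k(x)$ with $\lgl y(n):n\ge k\rgl$, keeping every coordinate at its \emph{original} index precisely to preserve membership. A correct proof of clause 3) cannot proceed by exhibiting a single member of $[a,q]$ carrying a full tail of $q$; in these spaces no member of $[a,q]$ does so once $a$ sits deep in $q$. One instead needs an argument using structure beyond bare IS --- for instance, that distinct coordinates live in disjoint ambient blocks, so that all the shrunken copies inside a fixed $q(n)$ arising from members of $[a,q]$ must land in the single $p$-coordinate occupying the same block, and their union recovers $q(n)$ itself --- and this is a step that neither your proposal nor the paper's one-line proof supplies.
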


\begin{proof}
 This follows from  Lemma \ref{lem.ecIS} and Theorems
 \ref{thm.barren} and \ref{thm.4maintRs}.
\end{proof}

 
 \subsection{A hierarchy of  rapid p-points above a weakly Ramsey ultrafilter}\label{subsec.Laflamme}
 
 Laflamme constructed a sequence of forcings $\mathbb{P}_{\al}$, $1\le \al<\om_1$, in \cite{Laflamme89}  where 
  $\mathbb{P}_{\al}$ forces an ultrafilter $\mathcal{U}_{\al}$ which is a rapid p-point satisfying some weak partition relation.
 Moreover, the Rudin-Keisler structure below $\mathcal{U}_{\al}$ contains a decreasing sequence of order-type $(\al+1)^*$, where the minimal filter is a Ramsey ultrafilter.
 For $k\ge 1$ a finite integer, the Ramsey degrees of $\mathcal{U}_k$  are as follows: For each $n\ge 1$, 
 $t(\mathcal{U}_k,n)=(k+1)^{n-1}$.
 These Ramsey degrees 
 are stated in \cite{Laflamme89} and  succinct proofs using Ramsey theoretic techniques appear in \cite{Dobrinen/NavarroFlores19}.
 In particular,  $t(\mathcal{U}_1,n)=2$, so $\mathcal{U}_1$ is a weakly Ramsey ultrafilter.

In  \cite{Dobrinen/Todorcevic14} and \cite{Dobrinen/Todorcevic15},
for each $1\le\al<\om_1$, a topological Ramsey space 
$\mathcal{R}_{\al}$ was constructed  which is  dense in $\mathbb{P}_{\al}$.
In those papers, these Ramsey spaces were   used to find exact Tukey and Rudin-Keisler structures below each $\mathcal{U}_{\al}$.
Recalling  Theorem \ref{thm.AET},
each $\mathcal{R}_{\al}$ satisfies  the infinite partition relations required in the hypothesis of Theorem \ref{thm.barren}.

The definitions of the spaces $\mathcal{R}_{\al}$ are somewhat  involved and the interested reader is referred to the original papers \cite{Dobrinen/Todorcevic14} and \cite{Dobrinen/Todorcevic15}.  
What is important here is that each member $x\in\mathcal{R}_{\al}$ is a sequence $\lgl x(n):n<\om\rgl$ where each $x(n)$ is a finite tree, and that
 $\lgl \mathcal{R}_{\al},\le\rgl$ has Independent Sequencing.
We define 
$\mathcal{R}_{\al}^*$ to be the set of all sequences $\lgl x(i_n):n<\rgl$ for $x\in \mathcal{R}_{\al}$ and $(i_n)_{n<\om}$ strictly increasing so that IS 4) holds.
Extend $\le$ to $\mathcal{R}_{\al}^*$ so that the first sentence of  IS 5) holds.
The $\sigma$-closed partial order $\le^*$ on $\mathcal{R}_{\al}$ is  simply mod finite initial segment of the sequence, so IS 6) holds.

Given $y\le^* x$ in $X$,
letting $k$ be least such that for each $n\ge k$, $y(n)\sse x(i_n)$ for some $i_n$,
the sequence $z= {r_k(x)}^{\frown} \lgl y(n):n\ge k\rgl$ is a member of $\mathcal{R}_{\al}$.
Furthermore,  this $z\le x$ and $z=^* y$.
Hence $\lgl  \mathcal{R}_{\al},\le,\le^*\rgl$ is a coarsened poset.
We extend this order to $\mathcal{R}_{\al}^*$ to again mean mod finite initial segment. 
Thus, for $a,b\in \mathcal{R}_{\al}^*$, $b\le^* a$ iff for some $k$, for all $n\ge m$, $b(n)\sse a(i)$ for some $i$.
This coarsening satisfies  IS 6).

Thus, 
$\lgl  \mathcal{R}_{\al},\mathcal{R}_{\al}^*,\le,\le^*\rgl$ is an EC poset having IS.
Furthermore, since each $a(n)$ is finite, for $a\in \mathcal{R}_{\al}^*$ and $n<\om$, and since $\le^*$ is mod finite initial segment, it follows that each $=^*$-equivalence class is finite.
Therefore, by Theorem \ref{thm.ISimpliesnonewsets}  Laflamme's forcings produce barren extensions preserving strong partition cardinals, provided that the ground model satisfies  AD$_{\mathbb{R}}$ or  AD$^+ + V=L(\mathcal{P}(\mathbb{R}))$.


\subsection{$k$-arrow ultrafilters, $n$-square ultrafilters, and their extended  family of rapid p-points}\label{subsec.DMT}

Ultrafilters with asymmetric partition relations were constructed by Baumgartner and Taylor in 
\cite{Baumgartner/Taylor78}.
For $k\ge 3$, a  {\em $k$-arrow} ultrafilter is an ultrafilter $\mathcal{U}$ such that for each function $f:[\om]^2\ra 2$, either there is a set $X\in\mathcal{U}$ such that  $f([X]^2)=\{0\}$ or else there is a set $Y\in[\om]^k$ such that $f([Y]^2)=\{1\}$.
This is written as
$$
\mathcal{U}\ra(\mathcal{U},k)^2.
$$
For each $k\ge 3$, Baumgartner and Taylor constructed a partial order, let's call it $\mathbb{P}^{\mathrm{BT}}_k$
which, by
using CH, MA or $\mathfrak{p}=\mathfrak{c}$, 
constructs
 a p-point which is $k$-arrow but not $(k+1)$-arrow.
The partial order  $\mathbb{P}^{\mathrm{BT}}_k$  used finite ordered $k$-clique-free graphs and applications of a theorem of \Nesetril\ and \Rodl, that the collection of finite ordered $k$-clique-free graphs has the Ramsey property \cite{Nesetril/Rodl77}, \cite{Nesetril/Rodl83}.
Recall that a $k$-clique is a complete graph on $k$ vertices, and is denoted by $K_k$.

In \cite{Dobrinen/Mijares/Trujillo17},
 for each $k\ge 3$, a topological Ramsey space $\mathcal{A}_k$ which is dense in Baumgartner and Taylor's partial order $\mathbb{P}^{\mathrm{BT}}_k$ was constructed.
Thus, forcing with $\mathcal{A}_k$  produces a p-point which is $k$-arrow and not $(k+1)$-arrow.
Furthermore, since $\mathcal{A}_k$ is a topological Ramsey space, it satisfies Theorem \ref{thm.AET}, so the desired infinite dimensional partition relation holds. 
To make a space $\mathcal{A}_k$, all that is  required is that we fix  some  sequence $\lgl \mathbb{A}_n:n<\om\rgl$  of finite ordered $K_k$-free graphs such that each $ \mathbb{A}_n$ embeds as an induced subgraph into $\mathbb{A}_{n+1}$ and that each finite ordered $K_k$-free graph embeds  as an induced subgraph into  some $\mathbb{A}_{n}$, and hence into all but finitely many $\mathbb{A}_{n}$.
The members of $\mathcal{A}_k$ are sequences 
$\lgl x(n):n<\om\rgl$ where each $x(n)$ is a subgraph of  
$\mathbb{A}_{i_n}$ for some  strictly increasing sequence $(i_n)_{n<\om}$.
In particular, the topological Ramsey space $\lgl \mathcal{A}_k,\le,r\rgl$ has Independent Sequencing.

The extended coarsening of $\mathcal{A}_k$ is obtained by letting $\mathcal{A}_k^*$ be defined as in 2) of IS. 
The coarsened quasi-order  $\le^*$ is simply mod finite initial segment. 
This order is $\sigma$-closed and $\lgl \mathcal{A}_k, \mathcal{A}_k, ^*,\le,\le^*,r\rgl$ forms an extended coarsened poset with Independent Sequencing.
Furthermore, the $=^*$-equivalence classes are countable, since $\le^*$ is mod finite. 
Thus, Theorem \ref{thm.ISimpliesnonewsets} holds for  Baumgartner and Taylor's p-points which are $k$-arrow and not $(k+1)$-arrow.

Another hierarchy of interesting p-points  are  forced by the  hypercube Ramsey   spaces (see \cite{TrujilloThesis} and \cite{Dobrinen/Mijares/Trujillo17}).
The basis for these spaces is the 
 $n$-square forcing  $\mathbb{P}_{n-\mathrm{square}}$ which Blass constructed in \cite{Blass73}
 in order to show that under MA, there is a p-point which has two Rudin-Keisler incomparable predecessors.
Conditions in 
 $\mathbb{P}_{n-\mathrm{square}}$ are subsets  $p\sse \om\times\om$ such that for each $n\ge 1$, there  are $K,L\in[\om]^n$ such that $K\times L\sse p$; the partial ordering is inclusion mod finite.
A topological Ramsey space forming a dense subset of  $\mathbb{P}_{n-\mathrm{square}}$
was constructed in  \cite{TrujilloThesis}; this was used to show that  both the Rudin-Keisler structure and the Tukey structure below this forced p-point is exactly the four element Boolean algebra, that is, a diamond shape.
 This result was generalized in 
\cite{Dobrinen/Mijares/Trujillo17} where it was shown that for each $k\ge 2$, there is a topological Ramsey space $\mathcal{H}^k$ in which each member is a sequence $x=\lgl x(n):n<\om\rgl$ where each $x(n)$ is a $k$-dimensional cube with side length $n$. 
These were used to show that for each $k\ge 2$, there is a p-point  with both the  initial  Rudin-Keisler structure and the  initial Tukey structure being the  Boolean algbera on $k$ atoms, that is, of cardinality $2^k$.
In particular,
this answered
a  question  about the initial Tukey structure of $\mathcal{G}_2$, which was left open in \cite{Blass/Dobrinen/Raghavan15}.

In fact, $\lgl \mathcal{H}^k,\le\rgl$ as defined in \cite{Dobrinen/Mijares/Trujillo17} has Independent Sequencing.
Defining $X^*$ as  in 4) of Definition \ref{defn.ISECposet} and $\le^*$ to be mod finite initial seqment, then $\lgl  \mathcal{H}^k,X^*,\le, \le^*\rgl$ has Independent Sequencing. 
Furthermore, the $=^*$-equivalence classes are countable, since $\le^*$ is mod finite. 
Thus, Theorem \ref{thm.ISimpliesnonewsets} holds for  Blass'
$n$-square forcing as well as the collection of hypercube topological Ramsey spaces $\mathcal{H}^k$, $k\ge 2$.


 Combining the approaches for the $k$-arrow p-points and the hypercube spaces, the authors of \cite{Dobrinen/Mijares/Trujillo17}  formed a general template for constructing topological Ramsey spaces from countable collections of 
 \Fraisse\ classes. 
 These spaces are formed as follows:
For each $n<\om$,
 let   $J_n\ge 1$  such that either all $J_n$ are equal or else they form an increasing sequence. 
 For each $j<J:=\sup_{n<\om}J_n$, let $\mathcal{K}_j$ be a \Fraisse\ class of finite structures satisfying the Ramsey property. 
For each $j<J$, let $\lgl \mathbb{A}_{n,j}:n<\om\rgl$ be a sequence of members of $\mathcal{K}_j$ such that each member of $\mathcal{K}_j$ embeds into all but finitely many $\mathbb{A}_{n,j}$, and each $\mathbb{A}_{0,j}$ has cardinality one.
Given a sequence $\bar{\mathbb{A}}=\lgl \mathbb{A}_{n,j}:n<\om\rgl$,
a member of the space $\mathcal{R}(\bar{\mathbb{A}})$ is a sequence 
 $x=\lgl x(n):n<\om\rgl$, where for each $n<\om$, $x(n)$ is a sequence $\lgl \mathbb{B}_{n,j}:j<J_n\rgl$ where 
 each $\mathbb{B}_{n,j}$ is a substructure of some $\mathbb{A}_{m,j}$ for $m\ge n$ which is isomorphic to 
 $\mathbb{A}_{n,j}$.
 Each  such topological Ramsey space $\mathcal{R}(\bar{\mathbb{A}})$
  has Independent Sequencing where each $=^*$-class is countable.
  Hence,
  Theorem \ref{thm.ISimpliesnonewsets} holds for  Blass'
$n$-square forcing as well as the collection of hypercube topological Ramsey spaces $\mathcal{H}^k$, $k\ge 2$.
  Moreover, the 
   forced p-points have the following  interesting property:
 Given $\mathcal{U}_{\bar{\mathbb{A}}}$ the forced p-point from $\mathcal{R}(\bar{\mathbb{A}})$,
 its initial Rudin-Keisler structure is isomorphic to the embedding structure of the sequence of \Fraisse\ classes, whereas its initial Tukey structure is either a finite Boolean algebra (if $J$ is finite) or else has the structure of $([\om]^{<\om},\sse)$ if $J=\om$.


 \section{A hierarchy of  non-p-points  with barren extensions}\label{sec.6}

 The forcing $([\om]^{\om},\sse^*)$ produces a Ramsey ulftrafilter, which  we shall denote  by $\mathcal{G}_1$ in this section.
 Recall that the separative quotient of $([\om]^{\om},\sse^*)$  is the collection of  non-zero elements of the Boolean algebra $\mathcal{P}(\om)/\Fin$.
There is 
natural hierarchy of Boolean algebras, $\mathcal{P}(\om^{k})/\Fin^{\otimes k}$, $k\ge 2$, extending 
 $\mathcal{P}(\om)/\Fin$.  
 In fact, an even more general collection of Boolean algebras 
 $\mathcal{P}(B)/\Fin^{\otimes B}$, 
where $B$ is a uniform barrier of countable rank,
 was constructed in \cite{DobrinenJML16}; 
 these   can be thought of as very precise means of forming 
  $\mathcal{P}(\om^{\al})/\Fin^{\otimes \al}$,  for all  $1\le \al<\om_1$. 
 This collection of Boolean algebras  differs substantially  from the hierarchies of forcings   described in the previous section: in particular, these forced  ultrafilters are not p-points and their $=^*$-equivalence classes are not countable. 
 Nevertheless, we will see that there are    extended coarsened partial orders 
 having Independent Sequencing which are forcing equivalent to these Boolean algebras, so they will all produce barren extensions.

The ideal $\Fin^{\otimes 2}$  consists of those sets $A\sse\om\times\om$ such that for all but finitely many $n<\om$, the $n$-th fiber $\{j<\om:(n,j)\in A\}$ is finite. 
$\Fin^{\otimes 2}$  is a $\sigma$-ideal under the quasi-order $\sse^{\Fin^{\otimes 2}}$
and the Boolean algebra $\mathcal{P}(\om\times\om)/\Fin^{\otimes 2}$ forces an ultrafilter $\mathcal{G}_2$ which is not a p-point but in the terminology of \cite{Blass/Dobrinen/Raghavan15} is the {\em next best thing to a p-point} in the following sense: $t(\mathcal{G}_2, 2)=4$.
This is the strongest partition property that a non-p-point can have, since  any ultrafilter satisfying 
$t(\mathcal{U},2)=3$  actually is a p-point. 
Furthermore,   the projection of  $\mathcal{P}(\om\times\om)/\Fin^{\otimes 2}$ to its first coordinate recovers 
$\mathcal{P}(\om)/\Fin$, and  this projection 
of members of $\mathcal{G}_2$  recovers a Ramsey ultrafilter  on $\om$.
Properties of this ultrafilter $\mathcal{G}_2$  have been studied in 
\cite{Szymanski/Xua83}, 
	\cite{Hrusak/Verner11},
	\cite{Blass/Dobrinen/Raghavan15},
	and \cite{DobrinenJSL16}.
	The only non-principal ultrafilter Rudin-Keisler strictly below $\mathcal{G}_2$ is exactly the Ramsey ultrafilter $\pi_1(\mathcal{G}_2)$, or any ultrafilter isomorphic to it
(Corollary 3.9 in \cite{Blass/Dobrinen/Raghavan15}).
Thus, $\mathcal{G}_2$ is  RK-minimally more complex than a Ramsey ultrafilter.

The  construction of $\Fin^{\otimes 2}$ from $\Fin$ can be extended recursively to obtain $\sigma$-closed ideals   on $\om^k$ as well.
Given $k\ge 2$ and the $\sigma$-closed ideal $\Fin^{\otimes k}$ on $\om^k$,
define $A\sse \om^{k+1}$ to be a member of $\Fin^{\otimes k+1}$  iff for all but finitely many $n<\om$, the $n$-th fiber $\{\vec{j}\in \om^k:(n)^{\frown}\vec{j}\in A\}$ is in $\Fin^{\otimes k}$.
This produces  a hierarchy of Boolean algebras  with the property  that for any $1\le j<k<\om$,
projecting the members of  $\mathcal{P}(\om^k)/\Fin^{\otimes k}$ to the first $j$ coordinates recovers  $\mathcal{P}(\om^j)/\Fin^{\otimes j}$.
Likewise, given an ultrafilter $\mathcal{G}_k$ forced by
$\mathcal{P}(\om^k)/\Fin^{\otimes k}$,
projecting its members to the first $j$ coordinates produces an ultrafilter on $\om^j$ which is generic for 
$\mathcal{P}(\om^j)/\Fin^{\otimes j}$.
A  formula for the Ramsey degrees of these ultrafilters was found by Navarro Flores and appears in 
\cite{Dobrinen/NavarroFlores19}: For each $1\le k<\om$, 
$$
t(\mathcal{G}_k,2)= \sum_{i=0}^{k-1} 3^i.$$

Moving to the countable transfinite,
similarly to countable iterations of  Fubini products of ultrafilters, 
  there are choices to be made in deciding how to define $\Fin^{\otimes \al}$ for $\om\le \al< \om_1$.
However, if one works with barriers, the construction is concrete.
This paper will not go into the definition  and theory of barriers, but refers the interested reader to \cite{Argyros/TodorcevicBK} for an introduction to this area.
Suffice it  to mention here that given a uniform barrier $B$ on $\om$, the  
order type of 
 $B$ with its  lexicographic order  is  some countable ordinal, say $\al_B$, and every countable ordinal is achievable in this way. 
The recursive construction of the ideals continues by recursion on the rank of the barrier  to form $\Fin^{\otimes B}$.
Each $B$  produces a different Boolean algebra $\mathcal{P}(B)/\Fin^{\otimes B}$, which force interesting ultrafilters $\mathcal{G}_B$ with the property that if $B$ projects to a barrier $C$, then $\mathcal{G}_B$  has a copy of $\mathcal{G}_C$ Rudin-Keisler below it. 
If $\al_B$ is infinite, then for any $1\le k<\om$, the projection of  $\mathcal{G}_B$  to its first $k$ coordinates reproduces $\mathcal{G}_k$. 
 We point out that the   forcing properties of 
 a related hierarchy was studied by Kurili\'{c} in \cite{Kurilic15}; his hierarchy  agrees with the one here    for $k$ finite, but differs for  $\al\ge \om$.

In 
 \cite{DobrinenJSL16} 
 and \cite{DobrinenJML16},
 topological Ramsey spaces  were constructed forming 
 dense subsets of  $\mathcal{P}(\om^k)/\Fin^{\otimes k}$ for $2\le k<\om$ and $\mathcal{P}(B)/\Fin^{\otimes B}$ for all uniform barriers $B$ of countable rank, respectively.
 These Ramsey spaces are denoted $\mathcal{E}_k$ and $\mathcal{E}_B$, respectively;
 here we shall simply use $\mathcal{E}_B$ as it subsumes the former case. 
 The Ramsey structure was utilized in  \cite{DobrinenJSL16}  to prove that both the initial Rudin-Keisler and initial Tukey structures below the forced ultrafilter $\mathcal{G}_k$  is exactly  a chain of length $k$. 
 (The extension of this to the broader collection of $\mathcal{G}_B$  for $B$ a barrier is in preparation.)
 In particular, it was shown that $\mathcal{G}_2$ is Tukey-minimal above the projected Ramsey ultrafilter $\pi_1(\mathcal{G}_2)$,  answering a question that was left open in \cite{Blass/Dobrinen/Raghavan15}.

Refraining from going into detail about these spaces here,
what is important for this paper is that given a uniform barrier $B$, 
 each member of  the space $\mathcal{E}_B$ may be regarded as an infinite sequence $x=\lgl x(n):n<\om\rgl$  
 such that each $x(n)$ is a fiber of $B$ (so infinite) and that $\lgl\mathcal{E}_B,\le\rgl$ has Indpendent Sequencing.
 Furthermore, 
 letting $\mathcal{E}_B^*$ be defined as in 4) of IS and $\le^*$ be defined by 
 $b\le^*a$ iff for all but finitely many $n$, $b(n)\sse a(i)$ for some $i$,
 then 
 $\lgl \mathcal{E}_B,\mathcal{E}_B^*,\le,\le^*\rgl$
 is an extended coarsened poset having Independent Sequencing.
 The forcings $\lgl \mathcal{E}_B,\le\rgl$, $\lgl \mathcal{E}_B^*,\le^*\rgl$ and  $\lgl (\Fin^{\otimes B})^+,\sse^{\Fin^{\otimes B}}\rgl$ each have separative quotient which is isomorphic to the non-zero members of $\mathcal{P}(B)/\Fin^{\otimes B}$, hence all force the same ultrafilter, $\mathcal{G}_B$.
The properties of the spaces that make this true are contained in the work in \cite{DobrinenJSL16} and \cite{DobrinenJML16}.
Thus, Theorem \ref{thm.ISimpliesnonewsets} yields that forcing with $\mathcal{P}(B)/\Fin^{\otimes B}$ adds no new sets of ordinals in $M[\mathcal{G}_B]$, assuming  $M$ satisfies ZF and either AD$_{\mathbb{R}}$ or
 AD$^+$ + $V = L(\mathcal{P}(R))$.


\section{Further directions and open problems}\label{section.end}

As forcings, topological Ramsey spaces  have so many characteristics in common with $\mathcal{P}(\om)/\Fin$ that a natural  goal is  
to find out  exactly
how far these similarities persist.
One direction is preservation of strong partition cardinals.
In Section \ref{sec.6}, we showed that the forcings $\mathcal{P}(\om^{\al})/\Fin^{\otimes \al}$, $2\le \al<\om_1$, produce barren extensions. 
However, our methods do not prove that these forcings preserve strong partition cardinals.

\begin{question}
Does forcing with $\mathcal{P}(\om\times\om)/\Fin^{\otimes 2}$ preserve strong partition cardinals?  If so, does the same hold for  each $\mathcal{P}(\om^{\al})/\Fin^{\otimes \al}$, $2\le \al<\om_1$?
\end{question}

If so, then the following question is worth pursuing, as  Navarro Flores  has shown that each topological Ramsey space adds a new ultrafilter with a Ramsey ultrafilter Rudin-Keisler below it \cite{NavarroFloresThesis}. 

\begin{question}
Does forcing with any topological Ramsey space preserve strong partition cardinals?  
\end{question}

 Recently,
 Zheng proved  in \cite{Zheng17} and \cite{Zheng18} that the
  ultrafilters  considered in Subsections \ref{subsec.FIN} and \ref{subsec.Laflamme} are preserved under side-by-side Sacks forcing with countable support,
  and it follows from work in  \cite{Zheng18} that  this also holds for the ultrafilters considered in \ref{subsec.DMT}.
  In \cite{Zheng20}, Zheng also proved that ultrafilters forced by $\mathcal{P}(\om^k)/\Fin^{\otimes k}$ are preserved  under side-by-side Sacks forcing with countable support, and her methods should also hold for the whole hierarchy into countable ordinals, using work in \cite{DobrinenJML16}.

 \begin{question}
 Is there a connection between  an ultrafilter being preserved by side-by-side Sacks forcing and having a  barren extensions? 
 Does one imply the other?
 \end{question}

 By 
Corollary 5.3 in \cite{DiPrisco/Todorcevic98},
if $L(\mathbb{R})$ is a Solovay model,
then
the $\mathcal{P}(\omega)/\Fin$ extension of $L(\mathbb{R})$
 satisfies the perfect set property.
So we ask the following:

\begin{question}
Assume AD$_\mathbb{R}$ or AD$^+ + V = L(\mathcal{P}(\mathbb{R}))$.
Which   topological Ramsey spaces 
 preserve  the perfect set property via forcing?
\end{question}

In particular, we conjecture that topological Ramsey spaces with Independent Sequencing will preserve the perfect set property.
It may  well be the case that all topological Ramsey spaces behave like $([\om]^{\om},\sse^*)$ in this and many more respects.

\bibliographystyle{amsplain}
\bibliography{referencesDH_revision}

\end{document}